\newtheorem{theorem}{Theorem}
\newtheorem{lemma}{Lemma}
\begin{document}
\title[Fej\'er and Suffridge polynomials in the DFC theory]{Fej\'er and Suffridge polynomials in the delayed feedback control theory}
\author{D. DMITRISHIN, A. KHAMITOVA, A. KORENOVSKYI AND A. STOKOLOS}

\begin{abstract}
A remarkable connection between optimal delayed feedback control (DFC) and complex polynomial mappings of the unit disc is established.  The explicit form of extremal polynomials turns out to be related with the Fej\'er polynomials. The constructed DFC can be used to stabilize cycles of one-dimensional non-linear discrete systems.   
\end{abstract}
\maketitle

\section{Introduction}

Non-negative trigonometric polynomials appear in many problems of harmonic analysis, univalent mappings, approximation theory, orthogonal polynomials on torus, number theory and in other branches of mathematics. One of the most natural and beautiful application of the properties of  non-negative trigonometric polynomials occurs solving extremal problems. 

For non-negative trigonometric polynomials $\sum_{j=0}^n a_j\cos jt,\; a_0=1,$ the following Fej\'er inequality holds \cite{F1} (see also \cite{PS}, 6.7, problem 52):
 $$
 |a_1|\le 2\cos\frac\pi{n+2}.
 $$        
 This inequality is sharp and the extremizer 
 $$
 \frac2{n+2}\sin^2\frac\pi{n+2}\Phi_n^{(1)}(t)
 $$       
is unique,  where 
 $$
\Phi_n^{(1)}(t)= \left(\frac{\cos\frac {n+2}{2}t}{\cos t-\cos\frac{\pi}{n+2}} \right)^2.
 $$       
 In 1900 L.Fej\'er showed \cite{F3}  that trigonometric polynomial 
 $$
 1+2\sum_{j=1}^n\left(1-\frac j{n+1}\right)\cos jt       
 $$      
 is nonnegative by proving the multiplicative presentation
 $$
 1+2\sum_{j=1}^n\left(1-\frac j{n+1}\right)\cos jt  = \frac1{n+1}\Phi_n^{(2)}(t),
 $$
 where
 $$
 \Phi_n^{(2)}(t)= \left(\frac{\sin\frac {n+1}{2}t}{\sin\frac t2} \right)^2.
 $$
 In \cite{F4} (see also \cite{PS}, 6.7, problem 50) Fej\'er proved the extremal property of the function $\Phi_n^{(2)}(t),$ that the maximal
 value of the non-negative trigonometric polynomial  $\sum_{j=0}^n a_j\cos jt,\; a_0=1$ does not exceed $n+1.$ Moreover, the equality happens only for the 
 polynomial $\frac1{n+1}\Phi_n^{(2)}(t)$ and only at points $2\pi k, k\in\mathbb Z.$
 
 The functions $ \Phi_n^{(i)}(t), i=1,2$ are called Fej\'er kernels. They possess several extreme properties \cite{D1}. In current paper a new extremal property is established. 
 Namely, we show that these kernels generate polynomial mappings of the unit disc in the complex plane 
 with the maximal size in a certain directions. 
 The coefficients of the extremal polynomials are defined in a unique way. They are linearly connected with the strength coefficients of the optimal control with 
 the delayed feedback which robustly stabilize cycles of the one-dimensional discrete dynamic systems. 
  
\section{Optimal Stabilization and extreme properties of polynomial mappings}
    
Impact on the problem of optimal chaotic regime is 
fundamental in nonlinear dynamics. The purpose of these actions is
synchronization of chaotic motions, or conversely, randomization of regular motions.
Moreover, the allowed values of the control are small which nonetheless
completely change the character of the movement. In this paper we consider the problem of optimal
stabilization of cycles in families of discrete autonomous systems with delayed
feedback control (DFC)  methods \cite{Ch6}.

We are given a scalar nonlinear discrete open-loop system 
\begin{equation} \label{1}
x_{k+1} =f\left(x_{k} \right),\, \, x_{k} \in\mathbb R^{1} ,\, \, n=1,\, \, 2,\, \, \ldots \, \, , 
\end{equation}
with one or more unstable $ T $ - cycles $ (\eta_1, ..., \eta_T), $ where all the numbers $ \eta_1, ..., \eta_T $ are distinct  and 
$ \eta_ {j +1 } = f (\eta_j), j = 1 , ..., T- 1 , \eta_1 = f (\eta_T). $ It is assumed that the multipliers
       $$ 
       \mu = \prod ^ T_ {j = 1 } f '(\eta_j)
       $$
of the  considered unstable cycles are negative. 
       
It is required to stabilize all (or at least some) $T$ - cycles by the control
 \begin{equation} \label{2}
u_{k} =-\sum _{j=1}^{n-1}\varepsilon _{j} \left(f\left(x_{k-jT+T} \right)-f\left(x_{k-jT} \right)\right),\, \, \,  \, 0<\varepsilon _{j} <1,\, \, j=1,\, \, \ldots \, \, ,\, n-1, 
\end{equation} 
 in a way so that the depth of used prehistory $ N ^ \ast = (n- 1 ) T $ would be minimal. \\
 
Note that for state synchronization $ x_k = x_ {k-T} $ the control \eqref{2} vanishes, i.e. closed system takes  
the same form as in the absence of control. 
This means that $ T $ - cycles of the open-loop and closed-loop systems coincide. 
      
 The closed-loop system $ x_ {k +1} = f (x_k) + u_k $ can be written as         
\begin{equation} \label{3}
x_{k+1} =\sum _{j=1}^{n}a_{j} f\left(x_{k-jT+T} \right),                                                             
\end{equation} 
      where
      $$ a_1 = 1 - \varepsilon_1, a_j = \varepsilon_ {j- 1 } - \varepsilon_j, j = 2 , ..., n- 1 , a_n = \varepsilon_ {n- 1} .$$
           It is clear that $ \sum ^ n_ {j = 1 } a_j = 1 .$    
           
         Let apply the following schedule of linearization for constructing of Jacoby matrix of the system \eqref{3}eqref and its characteristic equation. It is clear that
\begin{equation} \label{4a}
\begin{split}
x_{n+1}&=a_1f(x_{n})+a_2f(x_{n-T})...+a_Nf(x_{n-T(N-1)})\\
\dots& \dots \\
x_{n+T}&=a_1f(x_{n+T-1})+a_2f(x_{n-1})...+a_Nf(x_{n-T(N-2)-1})
\end{split}
\end{equation}
Solution to the system \eqref{4a} can be written in the form
\begin{equation} \label{5a}
\begin{split}
x_{Tn}&=\eta_1+u_n^1
\\
\dots& \dots \\
x_{Tn+T-1}&=\eta_T+u_n^T
\end{split}
\end{equation}
Substitute solutions  \eqref{5a} in  \eqref{4a} assuming that in a neigborhood of a cycle the quantities $u_n^1,\dots,u_n^T$ are small.

Let $n=Tm.$ Then 
$$
x_{n+1}=x_{Tm+1}=\eta_2+u_m^2, 
x_{n+2}=x_{Tm+2}=\eta_3+u_m^3,
\dots, x_{n+T}=x_{T(m+1)}=\eta_1+u_m^1
$$
Extracting linear part and taking into account that $\eta_1=f(\eta_2),\dots,\eta_t=f(\eta_1)$ we get
\begin{equation} \label{6a}
\begin{split}
u_m^2&=f^\prime(\eta_1) (a_1u_m^1+...+a_Nu_{m-N+1}^1)\\
u_m^3&=f^\prime(\eta_2) (a_1u_m^2+...+a_Nu_{m-N+1}^2)\\
\dots& \dots \\
u_m^T&=f^\prime(\eta_{T-1}) (a_1u_m^{T-1}+...+a_Nu_{m-N+1}^{T-1})\\
u_{m+1}^1&=f^\prime(\eta_T) (a_1u_m^T+...+a_Nu_{m-N+1}^T)
\end{split}
\end{equation}
Since this is a linear system it's solution can be written as
$$
\left(
\begin{array}{c}
u^1_m\\ \dots \\u^T_m
\end{array}
\right)
=\left(
\begin{array}{c}
c_1\\ \dots \\c_T
\end{array}
\right)\lambda^m,
$$
which after substitution to the system \eqref{6a} leads to a system

{\tiny   
\begin{equation}\label{7a}
\left(
\begin{array}{cccccc} 
{-f'\left(\eta _{1} \right)\cdot p\left(\lambda^{-1} \right)} & {1} & {0} & {\ldots } & {0} & {0} \\ 
{0} & {-f'\left(\eta _{2} \right)\cdot p\left(\lambda^{-1} \right)} & {1} & {\ldots } & {0} & {0} \\ 
{\ldots } & {\ldots } & {\ldots } & {\ldots } & {\ldots } & {\ldots } \\ 
{0} & {0} & {0} & {\ldots } & {-f'\left(\eta _{T-1} \right)\cdot p\left(\lambda^{-1} \right)} & {1} \\ 
{\lambda } & {0} & {0} & {\ldots } & {0} & {-f'\left(\eta _{T} \right)  \cdot p\left(\lambda^{-1} \right)} 
\end{array}\right)
\left(
\begin{array}{c}
c_1\\c_1\\ \dots \\c_{T-1} \\c_T
\end{array}
\right)=\left(
\begin{array}{c}
0\\0\\ \dots \\0\\0
\end{array}
\right)
\end{equation}
}
where 
$p (\lambda^{-1}) = (\alpha_1 +\alpha_2\lambda^{-1} + ... + \alpha_n\lambda^ {-n+1})$.
Standard technique to study the stability of T - cycle is check the location of all the zeros of the determinant of the Jacobian matrix
of the system \eqref{7a}  in the unit disc $\mathbb D$ of complex plane. In this case, the  determinant is equal to
$$ (-1) ^ {{T- 1 }} \lambda + \prod ^ T_ {j = 1 } (-f ^\prime (\eta_j) p (\lambda ^ {-1})),$$
where $\prod_{j=1}^Tf^\prime(\eta_j)=\mu.$
Since we need to consider the stability of not one, but several cycles we should considered 
a family of  characteristic equations
\begin{equation} \label{4}
\left\{\lambda -\mu \cdot \left(p\left(\lambda ^{-1} \right)\right)^{T} =0,\, \, \mu \in \left(-\mu ^{*} ,\, -1\right)\right\},                                                
\end{equation}
where $ \mu ^ \ast $ is the lower bound of T - cycles multipliers, and $ p (1) = 1.$ 
  
For a sufficiently small value $ | \mu |, $ all the roots of $ \lambda + | \mu | \cdot \left (p \left (\lambda ^ {-1}) \right) \right) ^ T = 0 $ lie in a unit disc.
With the increasing of $ | \mu |, $ the roots come out to the boundary of the unit disc. For each coefficient vector $ (a_1, .... a_n),$ the minimum value of $ | \mu |$   that keeps the roots in the unit disc will be denoted by $ \mu_o (a_1, ...,a_n).$ 

 If there are coefficients  $\tilde a_1, ....,\tilde a_n, \tilde a_1+....+ \tilde a_n = 1$ such that  
 $ \mu_o (\tilde a_1, ....,\tilde a_n)> \mu ^ \ast, $ then all equations in the family \eqref{4} corresponding to these
coefficients have roots only in the unit disc and therefore the cycles of the system \eqref{1} can be stabilized
by the control  \eqref{2}. In this regard a number of problems pope out.\bigskip
      
{\bf Problem 1.}  Demonstrate that for any $n>2$ and for any positive integer $T,$ the function 
$ \mu_o (a_1, ...,a_n)$ defined on the hyperplane $a_1+....+a_n = 1$ is semicontinuous from below and
is bounded. Find the value
$$
\mu_n(T)=\sup_{a_1+....+a_n = 1}\mu_o (a_1, ...,a_n).
$$


{\bf  Problem 2.}  Show that for any  $\mu ^ \ast> 1 $ and  for any positive integer $T,$ there is an
integer $n$ and the coefficients $\tilde a_1, ....,\tilde a_n, \tilde a_1+....+ \tilde a_n = 1$ such that
 $ \mu_o (\tilde a_1, ....,\tilde a_n)> \mu ^ \ast,$ i.e. $\mu_n(T)\to\infty$ as $n\to\infty.$\\


{\bf  Problem 3.} (Dual to Problem 2). For a fixed $T$ and a fixed $ \mu ^ \ast, $ find the minimal positive integer $n$ and strength coefficients $\varepsilon_ 1,\dots, \varepsilon_ {n- 1} $ of the controller (2) such that
all $T$- cycles for the system \eqref{1} closed by this control  with multipliers $ \mu \in (- \mu ^ \ast, 0) $ will be stable. \\

The solution  $n ^ \ast $ to the Problem 3  is related to the solution  $ \mu_n (T) $ of the Problem 2 by the relations
$$ \mu_ {n \ast} (T)> \mu ^ \ast, \quad\mu_ {n ^ \ast- 1 } (T) \le \mu ^ \ast.$$ 

Note that $T$- cycle stability  by itself does not mean practical realization of trajectories of this cycle  for a given control. The trajectories start being attracted to the cycle when the previous coordinate values fall into the basin of attraction of the cycle in the space of initial data.
This basin of attraction may be so small that no segment of trajectories of an open-loop or  closed-loop system would be inside this region. Thus, another problems arises.\\

{\bf  Problem 4.}  For a closed system (3) evaluate the domain of attraction of $T$-cycles in the space of initial data.\\

{\bf  Problem 5.}  For a closed system (3) with stable $T$- cycles  determine all stable cycles of length different from $T$.\\
 
Below we provide solution  to the Problems 1, 2 and 3 for $T = 1,2.$     
     
\section{Construction of the objective function}     
 
 The equation \eqref{4} implies that $\dfrac1\mu=\dfrac1\lambda\left(p\left(\dfrac1\lambda\right)\right)^T,$  where $\lambda\in\mathbb D.$ Therefore the value $\dfrac1\mu$ is in the image of the exterior of the unit disc under the mapping
 $z(p(z))^T.$ The boundary of this image consists entirely of the points in the set $ \left \{e ^ {i \omega} (p (e ^ {i \omega})) ^ T: \omega \in [0,2 \pi) \right \} $ but not necessarily from all those points. It is assumed 
that $\mu$ is real and negative, therefore $\dfrac1{|\mu|}$ {\it  is equal to maximal distance from zero to the the boundary of the image of the exterior of the unit disc in the direction of negative real semi-axis}: 
 $$ 
 \max_ {\omega \in [0,2 \pi)} \left \{\left| p(e ^ {i \omega}) \right|^ T: \arg (e ^ {i \omega} (p(e ^ {i \omega})) ^ T) = \pi \right\}=
 $$  
$$ 
\displaystyle -\min_ {\omega \in [0,2 \pi)} \left \{\Re \left(e ^ {i \omega}p(e ^ {i \omega})) ^ T\right): \Im\left(e ^ {i \omega}p(e ^ {i \omega})) ^ T\right)=0 \right \}.
$$ 
The problem 1 is to minimize the distance:           
$$ 
\frac { 1 } {\mu_n (T)} = \displaystyle-\sup_ {p(z): p ( 1) = 1 } \left \{\displaystyle \min_ {\omega \in [ 0 , 2 \pi)} \left \{\Re (e ^ {i \omega} (p (e ^ {i \omega})) ^ T): 
        \Im (e ^ {i \omega} (p (e ^ {i \omega})) ^ T) = 0 \right \} \right \} =
$$ 
$$ 
\inf_ {p(z): p ( 1) = 1}  \left\{\max_ {\omega \in [ 0 , 2 \pi)} \left\{ \left| p(e ^ {i \omega})\right|^T \arg (e ^ {i \omega} (p (e ^ {i \omega})) ^ T) = \pi\right \} \right \}.
$$    
  
 {\bf   Remark}. Since $ \bar z (p (\bar z)) ^ T =\overline{z (p (z)) ^ T} $, then in the above formulas $ \omega $ can be considered in the interval $ [ 0 , \pi/T]. $ \bigskip
 
By making the change $ \omega = T t, $ the formula for $ \frac { 1 } {\mu_n (T)} $ can be written as 
\begin{equation} \label{7}
\frac{1}{\mu _{n} (T)} =\, \left[\mathop{\inf }\limits_{p(z):p(1)=1} \, \left\{\, \mathop{\max }\limits_{t\in \left[0,\frac{\pi }{T} \right)} \left\{\left|\, p(e^{iT\, t} )\right|:\, \, \, \arg \left(e^{it} \left(p(e^{iT\, t} )\right)\right)=\frac{\pi }{T} \right\}\, \right\}\right]^{T} .                         
\end{equation} 

Since 
$$
(ze ^ {i \frac { 2 \pi} {T}}) \cdot p \left(\left(ze ^ {i \frac { 2 \pi} {T}} \right) ^ T\right)
 = e ^ {i \frac { 2 \pi} {T}} z \cdot p (z ^ T),
$$
the polynomial $ z \cdot p(z ^ T) $ has  $ T $-symmetry.
   
  Formula \eqref{7} for $ T = 1 $ and 2 can be conveniently written as
 \begin{equation} \label{8}
\frac{1}{\mu _{n} (1)} =-\mathop{\sup }\limits_{\sum _{j=1}^{n}a_{j}  =1} \left\{\, \mathop{\min }\limits_{\omega \in \left[0,\pi \right]} \left\{\, \sum _{j=1}^{n}a_{j} \cos jt :\, \, \sum _{j=1}^{n}a_{j} \sin jt =0\right\}\, \right\},                         
\end{equation} 
\begin{equation} \label{9}
\frac{1}{\mu _{n} (2)} =\left[\mathop{\inf }\limits_{\sum _{j=1}^{n}a_{j}  } \left\{\, \mathop{\max }\limits_{\omega \in \left[0,\frac{\pi }{2} \right]} \left\{\, \sum _{j=1}^{n}a_{j} \sin (2j-1)t :\, \, \sum _{j=1}^{n}a_{j} \cos (2j-1)t =0\right\}\, \right\}\right]^{2} .            
\end{equation} 

 Surprisingly, the values of $ \mu_n ( 1 ) $ and $ \mu_n ( 2) $ admit a simple explicit expression
through $n,$ and the coefficients of limiting extreme polynomials  $ z \cdot p ^ {( 1) } _ {0 } (z), z \cdot (p ^ {( 2) } _ {0 } (z)) ^ 2 $ 
can be easily expressed in terms of the Fejer kernel $ \Phi ^ {(1)} _ {n} (t)$ and $ \Phi^{(2)}_n (t) .$  

\section{Auxiliary results}

The main idea of solving the above problems is to determine
necessary conditions for the mappings of the form $ z \cdot p (z ^ T) $
maximally reduce the class of possible mappings 

The self-intersection of the circle image corresponds to a factorization of the conjugate trigonometric polynomials. The factorization theorem can be regarded as real analogue of Bezout's theorem

\begin{theorem}\label{trm1}
Let
\begin{equation} \label{10}
C(t)=\sum _{j=1}^{n}a_{j} \cos jt ,  \qquad S(t)=\sum _{j=1}^{n}a_{j} \sin jt  
\end{equation}
 be a pair of conjugate trigonometric polynomials with real coefficients. And let the equalities
 \begin{equation} \label{11}
S(t_{1} )=\, \, \ldots \, \, =S(t_{m} )=0, \qquad  C(t_{1} )=\, \, \ldots \, =C(t_{m} )\, =\gamma ,                                           
\end{equation} 
are valid where the numbers $ t_1 ..., t_m $ belong to the interval $ (0 , \pi) $, and $ 2m \le n$.
Then trigonometric polynomials \eqref{10} admit  a presentation
\begin{equation} \label{12}
C(t)=\gamma +\prod _{j=1}^{m}(\cos t-\cos t_{j} )  \sum _{k=m}^{n-m}\alpha _{k}  \cos kt,\; S(t)=\prod _{j=1}^{m}(\cos t-\cos t_{j} ) \sum _{k=m}^{n-m}\alpha _{k}  \sin kt,            
\end{equation}
where $ \alpha_ {m} = -2 ^ {m} \gamma$ and  the coefficients $ \alpha_ {m}, .... \alpha_ {n-m} $ can be uniquely expressed in terms of $\gamma, a_1, ...., a_ {n}.$ 
\end{theorem}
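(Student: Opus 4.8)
The plan is to repackage the conjugate pair $(C,S)$ into a single complex polynomial, deduce the stated factorization from the ordinary factorization of that polynomial over $\mathbb{C}$, and let the reality of the data force the coefficients $\alpha_k$ to be real while an index shift absorbs the unimodular factor that appears. Throughout I treat $t_1,\dots,t_m$ as distinct (which is the content of the ``self-intersection'' hypothesis), since distinctness is exactly what supplies the $2m$ simple zeros needed below.

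First I would set $P(z)=\sum_{j=1}^n a_j z^j$, so that on the unit circle $P(e^{it})=C(t)+iS(t)$. The hypotheses \eqref{11} say precisely that $P(e^{it_j})=\gamma$ for $j=1,\dots,m$. Since the $a_j$ and $\gamma$ are real, complex conjugation gives $P(e^{-it_j})=\gamma$ as well. As the $t_j$ are distinct points of the open interval $(0,\pi)$, the $2m$ numbers $e^{\pm it_j}$ are distinct and none equals $\pm1$, so $P(z)-\gamma$ vanishes at $2m$ distinct points; grouping the conjugate pairs into real quadratics shows it is divisible in $\mathbb{R}[z]$ by
$$
\prod_{j=1}^m (z-e^{it_j})(z-e^{-it_j})=\prod_{j=1}^m\bigl(z^2-2z\cos t_j+1\bigr).
$$
Writing $P(z)-\gamma=\prod_{j=1}^m(z^2-2z\cos t_j+1)\,R(z)$, the quotient $R(z)=\sum_{k=0}^{n-2m}\beta_k z^k$ has real coefficients and degree $n-2m$, and is unique by uniqueness of polynomial division; this is where $2m\le n$ is used, to guarantee a nonnegative quotient degree.

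The second step is to substitute $z=e^{it}$ using the elementary identity $e^{2it}-2e^{it}\cos t_j+1=2e^{it}(\cos t-\cos t_j)$, which turns each quadratic factor into $2e^{it}(\cos t-\cos t_j)$ and hence gives
$$
P(e^{it})-\gamma=2^m e^{imt}\prod_{j=1}^m(\cos t-\cos t_j)\,R(e^{it}).
$$
Expanding $2^m e^{imt}R(e^{it})=\sum_{k=0}^{n-2m}2^m\beta_k e^{i(k+m)t}=\sum_{l=m}^{n-m}\alpha_l e^{ilt}$ with $\alpha_l:=2^m\beta_{l-m}$ (real), and taking real and imaginary parts of the displayed identity (noting $\prod_{j=1}^m(\cos t-\cos t_j)$ is real), reproduces \eqref{12} verbatim. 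Evaluating the factorization at $z=0$ gives $-\gamma=P(0)-\gamma=1\cdot R(0)=\beta_0$, whence $\alpha_m=2^m\beta_0=-2^m\gamma$, and uniqueness of the $\alpha_k$ follows from that of $R$.

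I expect the only delicate points to be bookkeeping ones: confirming that all $2m$ roots are genuinely distinct, so that the entire product of quadratics — not merely part of it — divides $P(z)-\gamma$; and correctly tracking the index shift induced by $e^{imt}$, which is what carries a quotient supported on frequencies $0,\dots,n-2m$ to a trigonometric sum supported on frequencies $m,\dots,n-m$. The reality of the $\alpha_k$ requires no separate argument, since it is automatic once $R$ is seen to lie in $\mathbb{R}[z]$.
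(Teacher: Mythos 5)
Your proposal is correct and follows essentially the same route as the paper: both pass to the complex polynomial $P(z)-\gamma$ (the paper's $F(z)$), divide out $\prod_{j}(z^2-2z\cos t_j+1)$ after pairing conjugate roots, rewrite each quadratic factor as $2e^{it}(\cos t-\cos t_j)$ to produce the index shift onto frequencies $m,\dots,n-m$, and read off $\alpha_m=-2^m\gamma$ from the constant term. The only differences are cosmetic (you subtract $\gamma$ at the end rather than building it into $F$, and you explicitly flag the distinctness of the $t_j$, which the paper leaves implicit).
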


\begin{proof} Consider the algebraic polynomial
$$ F (z) = - \gamma + \sum \limits ^ {n} _ {j = 1 } \alpha_ {j} z ^ {j}.$$
Then
$$
C (t) = \gamma + \Re \left \{F (e ^ {it}) \right \},
S (t) = \Im \left \{F (e ^ {it}) \right \}.$$
By \eqref{11} $ F (e ^ {it_ j}) = 0$ and $ F (e ^ {-it_ j}) = 0 , j = 1 , ...., m. $ 
The fundamental theorem of algebra implies the existence of the numbers $ \beta_ { 1 }, ...., \beta_ {n -2m} $ such that
$$
F (z) = \left (
\prod \limits ^ {m} _ {j = 1 } (z-e ^ {it_j}) (z-e ^ {-it_j})
\right)
\left (
- \gamma + \sum ^ {n-2m} _ {k = 1 } \beta_ {k} z ^ {k}
\right).
$$
Let us modify the product
$$
\prod ^ {m} _ {j = 1 } \left (z-e ^ {it_j}\right) \left (z-e ^ {-it_j}\right) =
\prod ^ {m} _ {j = 1 } \left (z ^ 2 -2z \cos t_ {j} +1 \right) = 2 ^ {m} z ^ {m} \prod ^ {m} _ { j = 1 } \left (\frac { 1} {2 } \left (z + \frac { 1 } {z} \right) - \cos t_ {j} \right).
$$
Therefore,
$$
F (z) = \prod ^ {m} _ {j = 1 } \left (\frac { 1} {2 } \left (z + \frac { 1 } {z} \right) - \cos t_ {j} \right) \left (-2 ^ {m}
\gamma z ^ {m} +2 ^ {m} \sum ^ {n-2m} _ {k = 1 } \beta_ {k} z ^ {m +k} \right).
$$
Then
$$
\Re \left \{F \left (e ^ {it} \right) \right \} = \prod ^ {m} _ {j = 1 } \left (\cos t-\cos t_ {j} \right) \left (-2 ^ {m} \gamma \cos {mt} +2 ^ {m} \sum ^ {n-m} _ {k = m +1} \beta_ {k-m} \cos {kt} \right),
$$
$$
\Im \left \{F \left (e ^ {it} \right) \right \} = \prod ^ {m} _ {j = 1 } \left (\cos t-\cos t_ {j} \right) \left (-2 ^ {m} \gamma \sin {mt} +2 ^ {m} \sum ^ {n-m} _ {k = m +1} \beta_ {k-m} \sin {kt} \right),
$$
which implies \eqref{12} with  $ \alpha_ {m} = -2 ^ {m} \gamma, \alpha_ {m + k} = 2 ^ {m} \beta_ {k}, k = 1, ..., n-2m. $
\end{proof}

Formula \eqref{12} allows us to generalize well-known identities, and to obtain new.
\bigskip

{\bf Examples.} 
 
 1. The following multiplicative representations of conjugate Dirichlet kernels are well known 
$$
\sum ^ {m} _ {j = 1 } \sin 2 {jt} = \frac {\sin {mt}} {\sin {t}} \sin (m +1) t, \sum ^ {m} _ {j = 1 } \cos 2 {jt} = \frac {\sin {mt}} {\sin {t}} \cos (m +1) t.
$$
Formula \eqref{12} gives another multiplicative representations of Dirichlet kernels
$$
\sum ^ {m} _ {j = 1 } \sin 2 {jt} = 2 ^ {m} \prod ^ {m} _ {j = 1 } (\cos {t} - \cos \frac {\pi {j}} {m +1}) \sin {mt},
$$
$$
\sum ^ {m} _ {j = 1 } \cos 2 {jt} = -1 +2 ^ {m} \prod ^ {m} _ {j = 1 } (\cos {t} - \cos \frac {\pi j} {m +1}) \cos {mt}.
$$
Letting $t = 0,$ we obtain the identity
$$
\frac {m +1} { 2 ^ {m}} = \prod ^ {m} _ {j = 1} \left(1 - \cos \frac {\pi j} {m +1}\right).
$$

2. In a similar way, one can get multiplicative formulas for the Dirichlet kernels

$$
\sum ^ {2 {m}} _ {j = 1 } \sin {jt} = 2 ^ {m} \prod ^ {m} _ {j = 1 } (\cos t-\cos \frac { 2 \pi j} {2 {m +1}}) \sin {mt},
$$

$$
\sum ^ {2 {m}} _ {j = 1 } \cos {jt} = -1 +2 ^ {m} \prod ^ {m} _ {j = 1 } (\cos t-\cos \frac {2 \pi j} {2 {m +1}}) \cos {mt},
$$

$$
\sum ^ {2 {m +1}} _ {j = 1 } \sin {jt} = 2 ^ {m} \prod ^ {m} _ {j = 1 } (\cos t-\cos \frac { \pi j} {m +1}) (\sin {mt} + \sin (m +1) t),
$$

$$
\sum ^ {2 {m +1}} _ {j = 1 } \cos {jt} = -1 +2 ^ {m} \prod ^ {m} _ {j = 1 } (\cos t-\cos \frac {\pi j} {m +1}) (\cos {mt} + \cos (m +1) t).
$$

3 . Since
$$
(1 + e ^ {-it}) ^ {m} = 1 + \sum ^ {m} _ {j = 1 } {m \choose j} e ^ {-jt},
(1 + e ^ {-it}) ^ {m} = 2 ^ {m} e ^ {-i \frac {mt} { 2}}
\left (
\frac {e ^ {-i \frac {t} { 2 }} + e ^ {i \frac {t} { 2}} } {2}
\right) ^ m,
$$
 we have multiplicative representations
$$
\sum ^ {m} _ {j = 1 }  {m\choose j}\sin2 {jt} = 2 ^ {m} \cos ^ {m} t \sin {mt},
$$
$$
\sum ^ {m} _ {j = 1 }  {m\choose j} \cos2 {jt} = -1 +2 ^ {m} \cos ^ {m} t \cos { mt},
$$
$$
\sum ^ {2m} _ {j = 1 } (-1) ^ {j + m}  {2m\choose j} \sin {jt} = 2 ^ {m} ( 1 - \cos t) ^ {m} \sin {mt},
$$
$$
\sum ^ {2m} _ {j = 0 } (-1) ^ {j + m} {2m\choose j}  \cos {jt} = 2 ^ {m} ( 1 - \cos t) ^ {m} \cos mt.
$$

As a consequence, we obtain the following representations for the Chebyshev polynomials
$$
T_ {m} (x) = \frac { 1} {2 ^ {m} x ^ {m}}
\sum ^ {m} _ {j = 0 }{m\choose j}  T_ {2j} (x),
$$

$$
T_ {m} (x) = \frac { 1} {2 ^ {m} ( 1 -x) ^ {m}} \sum ^ {2m} _ {j = 0 } (-1) ^ {j + m } {2m\choose j}  T_ {j} (x).
$$

In the future, we will need a property of nonlocal separation from zero for the image of
the unit disc. A good example of such claim could is a famous  K\"obe Quarter Theorem \cite{K}.
Unfortunately, it is unclear how to apply it for two reasons. First, there is a different normalization of the mappings: 1 mps 1. Second, considered mappings are not necessarily univalent. 

We can prove a weaker statement which is enough for our needs.

\begin{lemma}\label{lm1} Let
$
F (z) = a_ { 1 } z + ... + a_ {n} z ^ {n}$,  $(a_ {j} \in \mathbb C, j = 1 , ..., n),
$
and $\mathbb D = \left \{| z | < 1 \right \}.$ Then the set $ F (\mathbb D) $ contains a disc with the center at the origin and radius
$$
\frac { 1} {2 ^ n} \sum ^ {n} _ {j = 1 } | a_ {j} |.
$$
\end{lemma}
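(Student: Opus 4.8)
The plan is to prove the stronger statement that every $w$ with $|w|\le r:=2^{-n}\sum_{j=1}^n|a_j|$ already lies in $F(\mathbb D)$, by exhibiting a preimage $z\in\mathbb D$ for each such $w$. The value $w=0$ is handled by $F(0)=0$, so I would fix $w\ne0$ with $|w|\le r$ and study the algebraic polynomial $P(z)=F(z)-w=a_nz^n+\dots+a_1z-w$. Since $P(0)=-w\ne0$, all of its roots $z_1,\dots,z_n$ (fundamental theorem of algebra, counted with multiplicity) are nonzero, so their reciprocals $\zeta_k=1/z_k$ are well defined. The whole argument is then a proof by contradiction: assume $F(z)=w$ has \emph{no} root in the open disc, i.e.\ $|z_k|\ge1$, equivalently $|\zeta_k|\le1$, for every $k$.

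The key step is to convert this assumption into an upper bound on the coefficients. Writing each factor as $z-z_k=-z_k(1-\zeta_k z)$ and collecting terms gives the factorization
\[
P(z)=a_n(-1)^n\Big(\prod_{k=1}^n z_k\Big)\prod_{k=1}^n(1-\zeta_k z),
\]
whose constant term must equal $-w$. Comparing the coefficient of $z^m$ on both sides then yields the clean identity $a_m=-w(-1)^m e_m(\zeta_1,\dots,\zeta_n)$ for $m=1,\dots,n$, where $e_m$ denotes the $m$-th elementary symmetric polynomial. The main obstacle, and the only place the hypothesis $|\zeta_k|\le1$ is actually used, is the estimate of these symmetric functions: since $e_m(\zeta)$ is a sum of $\binom nm$ products of $m$ of the $\zeta_k$ and each $|\zeta_k|\le1$, I obtain $|e_m(\zeta)|\le\binom nm$, hence $|a_m|\le|w|\binom nm$.

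Summing over $m$ and using $\sum_{m=1}^n\binom nm=2^n-1$ would then give
\[
\sum_{j=1}^n|a_j|\le|w|\,(2^n-1)<2^n|w|\le 2^n r=\sum_{j=1}^n|a_j|,
\]
a contradiction (legitimate as soon as $F\not\equiv0$, the case $F\equiv0$ being trivial with $r=0$). Hence at least one root $z_k$ satisfies $|z_k|<1$, i.e.\ $w\in F(\mathbb D)$, which is exactly the desired covering. I expect the only routine nuisances to be the bookkeeping in the factorization and the degenerate case $\deg F=d<n$; in the latter the identical computation with $d$ roots produces the sharper bound $\sum_j|a_j|\le(2^d-1)|w|$, and because $2^d-1\le2^n-1$ the same contradiction persists, so no separate treatment is needed.
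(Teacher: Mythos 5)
Your argument is correct and is essentially the paper's own proof in different notation: the paper takes an unattained value $\gamma$, inverts the polynomial $F(z)-\gamma$ so that its zeros land in the closed unit disc, and applies Vieta's formulas to bound $|a_j/\gamma|$ by $\binom{n}{j}$, which is exactly your bound $|e_m(\zeta_1,\dots,\zeta_m)|\le\binom{n}{m}$ on the elementary symmetric functions of the reciprocal roots, followed by the same summation to $2^n-1$. Both arguments in fact yield the slightly stronger radius $\frac{1}{2^n-1}\sum_{j=1}^n|a_j|$, which the paper notes is sharp for $F(z)=(z+1)^n-1$.
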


\begin{proof}  Let $\gamma$ be an exceptional value for the polynomial $F(z)$ for all $z\in \mathbb D.$ Since  $F(z)\le \sum_{j=1}^n |a_j|,$ such values do exist. 
Then the polynomial $F(z)-\gamma$ does not have roots inside 
$\mathbb D$ and therefore the inversion produces a polynomial $z^n(-\gamma+F(1/z))$ that has all zeros in $\mathbb D,$ i.e. Schur stable. It can be written as 
$$
-\gamma z^n+a_1z^{n-1}+\dots+a_n=-\gamma(z^n-\frac{a_1}\gamma z^{n-1}-\dots-\frac{a_n}\gamma).
$$
Applying Vieta's theorem to the polynomial in parenthesis, we get the estimate 
$\left| \frac{a_j}\gamma\right|\le{n\choose j},\;j=1,\dots,n,$ which implies 
$\sum ^ {n} _ {j = 1 } \left| \frac{a_ {j}}\gamma\right| \le 2 ^ {n} -1,$
and finally
$
|\gamma|\ge  \dfrac{1}{2 ^ {n} -1}\sum ^ {n} _ {j = 1 } |a_ {j}|.
$
Note that the above estimate cannot be improved as the example of  $F(z)=(z+1)^n-1$ demonstrates.
 
\end{proof}

\section{Solution of optimization problems}

\subsection{T=1 case}

 \begin{theorem}\label{trm2}
 Let $C(t)$ and $S(t)$  be a pair of conjugated trigonometric polynomials
 $$
 C(t)=\sum_{j=1}^na_j\cos jt,\quad S(t)=\sum_{j=1}^na_j\sin jt,
 $$
normalized by the conditions $\sum\limits_{j=1}^na_j=1$.
Let  $J_1$  be a solution to the extremal problem
 $$\sup\limits_{a_1,\dots,a_n}\min\limits_t\left\{C(t):\ S(t)=0\right\}.$$
 Then
 $$
 J_1=-\tan^2\frac\pi{2(n+1)}.
 $$
 \end{theorem}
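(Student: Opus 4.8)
The plan is to read the functional geometrically. Writing $F(z)=\sum_{j=1}^{n}a_jz^{j}$ with the normalization $F(1)=\sum a_j=1$, the pair $(C(t),S(t))=(\Re F(e^{it}),\Im F(e^{it}))$ parametrizes the image of the unit circle, and $\min_t\{C(t):S(t)=0\}$ is simply the abscissa of the left-most point at which this image meets the real axis. Thus $J_1$ asks, over all normalized $F$, how far to the right we can keep the left-most real crossing. First I would record the two crossings that are always present: $t=0$, with value $C(0)=1$, and $t=\pi$, with value $C(\pi)=\sum a_j(-1)^{j}$ (since $S(\pi)=0$ identically); the left-most crossing is therefore controlled by $C(\pi)$ together with the interior crossings. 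The crucial heuristic is that as one pushes $C(\pi)$ upward the interior arc of the curve is forced to develop a tangency with the real axis, so I expect $J_1$ to be a true supremum, approached along a one-parameter family in which an interior crossing degenerates into a double zero of $S$, rather than a maximum attained at a single polynomial. (The small cases confirm this: for $n=2$ the bound $-\tfrac13$ is attained by $F(z)=\tfrac23z+\tfrac13z^{2}$, whereas for $n\ge 3$ it is only a limit.)

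For the upper bound I would argue that at (or in the limit of) an extremizer the minimal value $\gamma$ is attained at interior points $t_1,\dots,t_m\in(0,\pi)$ that are double zeros of $S$ with $C(t_j)=\gamma$, together with the forced crossing at $t=\pi$. Feeding the data $S(t_j)=0,\ C(t_j)=\gamma$ into Theorem \ref{trm1} yields the rigid factorization $C(t)-\gamma=\prod_{j=1}^{m}(\cos t-\cos t_j)\sum_{k=m}^{n-m}\alpha_k\cos kt$ and its companion for $S$, with $\alpha_m=-2^{m}\gamma$ pinned down. The normalization $C(0)=1$, together with the requirement that $\gamma$ really be the minimum (so the remaining crossings, including $t=\pi$, carry values $\ge\gamma$), then converts into positivity/sign conditions on the second factor $\sum_k\alpha_k\cos kt$. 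The point of this step is that those conditions force the second factor to be a non-negative cosine polynomial of controlled degree, so that maximizing $\gamma$ collapses to an extremal problem for non-negative trigonometric polynomials.

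At this stage I would invoke the Fej\'er theory quoted in the Introduction. The reduced problem is exactly of the type governed by the kernel $\Phi^{(1)}$: among non-negative cosine polynomials with prescribed constant term the extremal leading coefficient is controlled by $\cos\frac{\pi}{n+1}$, with unique extremizer the Fej\'er kernel $\Phi_{n-1}^{(1)}(t)=\left(\dfrac{\cos\frac{n+1}{2}t}{\cos t-\cos\frac{\pi}{n+1}}\right)^{2}$, whose relevant angle is $\frac{\pi}{n+1}$. Substituting this extremizer and using the half-angle identity $\tan^{2}\frac{\pi}{2(n+1)}=\frac{1-\cos\frac{\pi}{n+1}}{1+\cos\frac{\pi}{n+1}}$ should deliver the value $J_1=-\tan^{2}\frac{\pi}{2(n+1)}$; reading the same kernel backwards through the substitution $F=\gamma_0\,(1-H)$ (with $H(0)=1$ and $H$ keeping the boundary off the negative real axis) furnishes the explicit near-extremal family that realizes the supremum, i.e. the matching lower bound.

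The main obstacle I anticipate is the positional/combinatorial step inside the upper bound: proving that the minimum is attained at exactly the right number of interior double contacts, with the parity distinction between odd and even $n$ (for even $n$ the point $t=\pi$ enters the factorization through an extra factor $1+\cos t$, for odd $n$ it does not), and that no competing contact pattern produces a larger $\gamma$. This rigidity is precisely what pins the extremizer to the Fej\'er kernel rather than to some other admissible polynomial, and it is where Theorem \ref{trm1} does the real work, turning the discrete data (value $\gamma$ at several crossings) into an exact factorization. A secondary but genuine difficulty is bookkeeping the supremum itself: since the value is generally not attained, the argument must be organized as a limit in which an interior crossing coalesces into a tangency, and one must check simultaneously that the limiting number is an upper bound for every admissible $F$ and the limit of admissible values.
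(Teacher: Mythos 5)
Your geometric reading of the functional, the role of the forced crossing at $t=\pi$, and your instinct that the supremum is only approached in a limit all match the paper. However, the structural ansatz on which your entire upper bound rests --- that at (or in the limit of) an extremizer the minimal value $\gamma$ is attained at interior double zeros $t_1,\dots,t_m$ of $S$ with $C(t_j)=\gamma$, so that Theorem~\ref{trm1} can be fed the data $(S(t_j),C(t_j))=(0,\gamma)$ --- is false, and the argument built on it collapses. Already for $n=3$ the limiting extremal coefficients are $a^0\approx(0.4393,\,0.4142,\,0.1464)$; the corresponding $S^0$ has a double zero at $t=3\pi/4$, where $C^0(3\pi/4)\approx-0.2071$, strictly \emph{below} $C^0(\pi)=-\tan^2\frac{\pi}{8}\approx-0.1716$. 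So the interior tangencies are not at the level $\gamma$, the factorization you would extract from Theorem~\ref{trm1} is not the one the extremizer satisfies, and the subsequent claim that the cofactor $\sum_k\alpha_k\cos kt$ is forced to be a non-negative cosine polynomial is unsupported. (The same computation shows why the supremum is not attained: the original functional evaluated at the limiting coefficients is $\approx-0.2071<J_1$.)

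The paper's mechanism is different and is the step you are missing. It first relaxes the functional to count only the \emph{sign changes} of $S$ in $(0,\pi)$ (plus the forced point $\pi$); this makes the functional upper semicontinuous so a maximizer exists on a large simplex (using Lemma~\ref{lm1} to keep it interior). The key variational step, Lemma~\ref{pos}, then shows that if the optimal $S$ had any sign change in $(0,\pi)$, perturbing the corresponding nodes $t_j\to\theta_j$ inside the factorization of Theorem~\ref{trm1} would strictly increase the relaxed minimum --- a contradiction. Hence the optimal configuration has \emph{no} interior crossings: $S^0(t)>0$ on $(0,\pi)$, the minimum over crossings reduces to the single value $C^0(\pi)=-\gamma_1+\gamma_2$, and the Fej\'er inequality is applied directly to the non-negative cosine polynomial $S^0(t)/\sin t=\gamma_1+2\gamma_2\cos t+\cdots$ of degree $n-1$ (whence the angle $\pi/(n+1)$), giving $|\gamma_2|\le\cos\frac{\pi}{n+1}\,|\gamma_1|$ and, with $\gamma_1+\gamma_2=1$, the value $-\tan^2\frac{\pi}{2(n+1)}$. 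Your near-extremal family for the lower bound is essentially the paper's perturbation $S^\varepsilon=(S^0+\varepsilon\sin t)/(1+\varepsilon)$ and is fine, but without the ``no interior sign changes'' lemma the upper bound is not established.
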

\begin{proof} 
Note that $S(\pi)=0$ for any choice of the coefficients $a_1,\dots,a_n.$ Denote
$$
\rho\left(a_1,\dots,a_n\right)=\min_{t\in[0,\pi]}\left\{C(t): S(t)=0\right\}.
$$
The quantity $\sup\{\rho\left(a_1,\dots,a_n\right) \}$ will be evaluated along the set 
$$
A_R=\left\{\left(a_1,\dots,a_n\right):\
\sum\limits_{j=1}^n a_j=1,\,\sum\limits_{j=1}^n\left|a_j\right|\le R\right\},
$$
where $R$ is a large enough positive number. 

Together with  $\rho\left(a_1,\dots,a_n\right)$ we   consider a function
$$
\rho_1\left(a_1,\dots,a_n\right)=\min_{t\in[0,\pi]}\left\{C(t):\ t\in \mathcal T\cup\{\pi\}\right\},
$$
where $\mathcal T$ is a subset of $(0,\pi)$, such that the function $S(t)$ changes sign.
Lemma \ref{lm1} implies that the curve $(C(t),S(t))$ enclosed zero, therefore there is a negative value of the variable $t$ where the trigonometric polynomial $S(t)$ changes the sign, hence $\rho_1\left(a_1,\dots,a_n\right)<0$ for any $a_1,\dots,a_n.$

\begin{lemma}\label{opt}
There is a pair of trigonometric polynomials $\left\{C^0(t),\,S^0(t)\right\}$  such that
$$
\sup\limits_{\left(a_1,\dots,a_n\right)}
\left\{\rho_1\left(a_1,\dots,a_n\right)\right\}
=\min_{t\in[0,\pi]}\left\{C^0(t):\ t\in\mathcal T^0\cup\{\pi\}\right\}
$$
where $\mathcal T^0$ is a subset of the interval $(0,\pi)$, such that the function $S^0(t)$ changes the sign.
\end{lemma}

\begin{proof} The function $\rho_1\left(a_1,\dots,a_n\right)$ is upper semi-continuous on the set $A_R$, besides possibly the points $(a_1,\dots,a_n)$ for which the minimal value of $C(t)$ is achieved in those zeros of the function $S(t),$ where   $S(t)$ does not change the sign.
An upper limit of the function $\rho_1\left(a_1,\dots,a_n\right)$ is equal the value of the function at the points of discontinuity which means that 
the function $\rho_1\left(a_1,\dots,a_n\right)$ is semi-continuous from above. Therefore, it achieves the maximum value on the set $A_R$  
$$
\overline{\rho_1}=\max\limits_{\left(a_1,\dots,a_n\right)\in A_R}
\left\{\rho_1\left(a_1,\dots,a_n\right)\right\}.
$$
It is clear that  $\overline{\rho_1}\ge-1,$ therefore $|\overline{\rho_1}|<1$ and by Lemma \ref{lm1} for
the extremal point  $a^0_1,\dots,a^0_n$ we have the estimate $\sum_{j=1}^n|a_j|\le 2^n.$ 
Thus, for $R>2^n$ the maximum is achieved in an internal point of the region $A_R$ and the optimal pair $\left\{C^0(t),\,S^0(t)\right\}$ that provides the maximum is independent on $R$ if $R>2^n.$ 
\end{proof}

The set  $\mathcal T\cup \left\{\pi \right\}$  is a subset of all zeros of the function $S(t),$ therefore  $\overline{\rho }\le \overline{\rho _{1} }$, where $\overline{\rho }=\mathop{\sup }\limits_{(a_{1} ,\, \ldots \, .,a_{n} )\in A_{R} } \left\{\, \, \rho (a_{1} ,\, \ldots \, ,a_{n} )\, \, \right\}$. A pair of trigonometric polynomials  $\left\{C^{0} (t),S^{0} (t)\right\}$, where the value
 $\overline{\rho _{1} }$ is achieved will be called optimal. 

\begin{lemma}\label{pos}
If polynomial $S(t)$ has zero in $(0,\pi)$ it cannot be optimal.
\end{lemma}
 
\begin{proof}
Let for the optimal polynomial $S^0(t)$ the set 
$$
\mathcal T=\left\{t_1,\dots,t_q\right\}, \; 0\le q\le n-1,
$$
be nonempty. And let
$$
\min\left\{C^0\left(t_1\right),\dots,C^0\left(t_q\right)\right\}=C^0\left(t_1\right),
$$
assuming additionally that
$$
C^0\left(t_1\right)=C^0\left(t_j\right), j=1,\dots,m\; (1\le m\le q)
$$
and
$$
C^0\left(t_1\right)<C^0\left(t_j\right), j=m+1,\dots, q.
$$

Then three cases are possible: either $C^0\left(t_1\right)<C^0(\pi),$ or
$C^0(\pi)\le C^0\left(t_1\right)\le 1,$ or $ C^0\left(t_1\right)>1.$\\

{\it Case 1.}
By Theorem~\ref{trm1},
 the trigonometrical polynomials $S^0(t)$, $C^0(t)$ can be written as
$$
S^0(t)=\prod_{j=1}^m\left(\cos t-\cos t_j\right)
\sum_{k=m}^{n-m}\alpha_k\sin kt,
$$
$$
C^0(t)=-\frac{\alpha_m}{2^m}+\prod_{j=1}^m\left(\cos t-\cos t_j\right)
\sum_{k=m}^{n-m}\alpha_k\cos kt.
$$
Since  $C^0\left(t_1\right)=-\frac{\alpha_m}{2^m}\le 0$ then $\alpha_m>0$.
Moreover,  $C^0(0)=1$, therefore
$$
-\frac{\alpha_m}{2^m}+
\prod\limits_{j=1}^m\left(1-\cos t_j\right)\sum_{k=m}^{n-m}\alpha_k=1,
\quad
\sum_{k=m}^{n-m}\alpha_k=\frac{1+\frac{\alpha_m}{2^m}}{\prod\limits_{j=1}^m\left(1-\cos t_j\right)}>0.
$$
Let us define the following auxiliary polynomials
\begin{multline}\nonumber
\begin{split}
S\left(\theta_1,\dots,\theta_m;t\right)&=N\left(\theta_1,\dots\theta_m\right)\cdot\\
&\cdot\prod_{j=1}^m\left(\cos t-\cos \theta_j\right)\sum_{k=m}^{n-m}\alpha_k\sin kt,\\
C\left(\theta_1,\dots,\theta_m;t\right)&=N\left(\theta_1,\dots\theta_m\right)\cdot\\
&\cdot \left(-\frac{\alpha_m}{2^m}+
\prod_{j=1}^m\left(\cos t-\cos \theta_j\right)\sum_{k=m}^{n-m}\alpha_k\cos kt\right),
\end{split}
\end{multline}
where the normalizing factor $N\left(\theta_1,\dots,\theta_m\right)$ guaranties the sums of the coefficients of each polynomials $S\left(\theta_1,\dots,\theta_m;t\right)$ and $C\left(\theta_1,\dots,\theta_m;t\right)$ to be 1. For the polynomial  $S\left(\theta_1,\dots,\theta_m;t\right),$ the set of sign changes will be
$\mathcal T_\theta=\left\{\theta_1,\dots,\theta_m,t_{m+1},\dots,t_q\right\}$. It is clear that
$S\left(t_1,\dots,t_m;t\right)\equiv S^0(t)$ and
$C\left(t_1,\dots,t_m;t\right)\equiv C^0(t)$.
The factor $N\left(\theta_1,\dots,\theta_m\right)$
is determined by the condition
$C\left(\theta_1,\dots,\theta_m;0\right)=1$, i.e.
$$
N\left(\theta_1,\dots,\theta_m\right)=
\frac1{-\frac{\alpha_m}{2^m}+\prod\limits_{j=1}^m\left(1-\cos\theta_j\right)
\sum\limits_{k=m}^{n-m}\alpha_k}.
$$
So, the polynomials
$S\left(\theta_1,\dots,\theta_m;t\right)$ and
$C\left(\theta_1,\dots,\theta_m;t\right)$
could be defined by the expressions
$$
S\left(\theta_1,\dots,\theta_m;t\right)=
\frac{\prod\limits_{j=1}^m\left(\cos t-\cos \theta_j\right)\sum\limits_{k=m}^{n-m}\alpha_k\sin kt}%
{-\frac{\alpha_m}{2^m}+\prod\limits_{j=1}^m\left(1-\cos\theta_j\right)
\sum\limits_{k=m}^{n-m}\alpha_k},
$$
$$
C\left(\theta_1,\dots,\theta_m;t\right)=
\frac{-\frac{\alpha_m}{2^m}+
\prod\limits_{j=1}^m\left(\cos t-\cos \theta_j\right)\sum\limits_{k=m}^{n-m}\alpha_k\cos kt}%
{-\frac{\alpha_m}{2^m}+\prod\limits_{j=1}^m\left(1-\cos\theta_j\right)\sum\limits_{k=m}^{n-m}\alpha_k}.
$$
Note that the coefficients $\alpha_m,\dots, \alpha_{n-m}$ are independent of the choice of parameters
$\theta_1,\dots,\theta_m.$ 

Let us show that for some $\theta_1,\dots,\theta_m$  the value of $\rho_1$ for the pair
$$
\left\{C\left(\theta_1,\dots,\theta_m;t\right),\, S\left(\theta_1,\dots,\theta_m;t\right)
\right\}
$$
is bigger then for the pair  $\left\{C^0(t), S^0(t),\right\}$,
i.e. the pair $\left\{C^0(t), S^0(t),\right\}$  cannot be optimal.
\medskip

By the defintion of $C\left(\theta_1,\dots,\theta_m;\theta_j\right)$  we get
\begin{equation}\nonumber
\begin{split}
C\left(\theta_1,\dots,\theta_m;\theta_1\right)=\dots
=C\left(\theta_1,\dots,\theta_m;\theta_m\right)=\\
\frac{-\frac{\alpha_m}{2^m}}
{-\frac{\alpha_m}{2^m}+\prod\limits_{j=1}^m\left(1-\cos\theta_j\right)\sum\limits_{j=m}^{n-m}\alpha_j}.
\end{split}
\end{equation}

Since  $\alpha_m>0$ and $\sum\limits_{j=m}^{n-m}\alpha_j>0$ ,
then $C\left(\theta_1,\dots,\theta_m;\theta_1\right)$, $j=1,\dots,m$,
is an increasing function of the parameters $\theta_1,\dots,\theta_m$.

Let $0<\theta_j-t_j<\varepsilon, j=1,\dots, m.$ From the continuity of trigonometric polynomials on $t$ and an all coefficients the following
inequalities hold
$$
C\left(\theta_1,\dots,\theta_m;\theta_j\right)>C^0\left(t_j\right),\; j=1,\dots,m
$$
$$
\left|C\left(\theta_1,\dots,\theta_m;t_j\right)-C^0\left(\theta_j\right)\right|<\delta,\; j=m+1,\dots,q,
$$
$$
\left|C\left(\theta_1,\dots,\theta_m;\pi\right)-C^0(\pi)\right|<\delta
$$ 
for any  $\delta$ with appropriate choice of  $\varepsilon.$ The above inequalities mean that the value 
$$
\min\left\{C\left(\theta_1,\dots,\theta_m;\theta_1\right),\dots,
C\left(\theta_1,\dots,\theta_m;\theta_m\right),
C\left(\theta_1,\dots,\theta_m;t_{m+1}\right),\dots,\right.
$$
$$\left.
C\left(\theta_1,\dots,\theta_m;t_q\right),
C\left(\theta_1,\dots,\theta_m;\pi\right)\right\}
$$
is larger then
$\min\left\{C^0\left(t_1\right),\dots,C^0\left(t_q\right),C^0(\pi)\right\}$
at least for small enough positive
$\theta_j-t_j$, $j=1,\dots,m$, i.e. the pair
$\left\{C^0(t), S^0(t),\right\}$  is not an optimal.\\

{\it Case 2.}
Let compute
$$
C^0(\pi)=-\frac{\alpha_m}{2^m}-\prod_{j=1}^m\left(1+\cos t_j\right)
\sum_{j=m}^{n-m}(-1)^{j+m-1}\alpha_j,
$$
$$
C\left(\theta_1,\dots,\theta_m;\pi\right)=
-\frac{\frac{\alpha_m}{2^m}+\prod\limits_{j=1}^m\left(1+\cos\theta_j\right)
\sum\limits_{j=m}^{n-m}(-1)^{j+m-1}\alpha_j}%
{-\frac{\alpha_m}{2^m}+\prod\limits_{j=1}^m\left(1-\cos\theta_j\right)
\sum\limits_{j=m}^{n-m}\alpha_j},
$$
and
$$C\left(t_1,\dots,t_m;\pi\right)=C^0(\pi).$$

Since we assume that  $C^0(\pi)\le-\frac{\alpha_m}{2^m}$,
then   $\sum\limits_{j=m}^{n-m}(-1)^{j+m-1}\alpha_j\ge0$,
and the quantity
$
\frac{\alpha_m}{2^m}+\prod\limits_{j=1}^m\left(1+\cos \theta_j\right)
\sum\limits_{j=m}^{n-m}(-1)^{j+m-1}\alpha_j
$
is decreasing with respect to each parameter  $\theta_1,\dots,\theta_m$. 

Since by the assumption $C^0(t_1)=-\frac{\alpha_m}{2^m}\le 1$ then
$$
\sum_{j=m}^{n-m}\alpha_j=\frac{1+\frac{\alpha_m}{2^m}}{\prod\limits_{j=1}^m\left(1-\cos\theta_j\right)}>0.
$$
For small increments of
$\theta_j-t_j$, $j=1,\dots,m$ the value of 
$
-\frac{\alpha_m}{2^m}+\prod\limits_{j=1}^m\left(1-\cos \theta_j\right)
\sum\limits_{j=m}^{n-m}\alpha_j
$
is close to 1, and is increasing with respect to each parameter $\theta_1,\dots,\theta_m$.
Therefore $C\left(\theta_1,\dots,\theta_m;\pi\right)$ is increasing with respect to each parameters $\theta_1,\dots,\theta_m$.
At the same time $C\left(\theta_1,\dots,\theta_m;\theta_j\right)$, $j=1,\dots,m$ are equal and are increasing
with respect to each parameters  $\theta_1,\dots,\theta_m$ too. Therefore in this case the pair
$\left\{C^0(t), S^0(t)\right\}$  cannot be an optimal either.\\

{\it Case 3.} By the  Theorem 1, the optimal polynomials can be written in the form
$$
C^0(t)=-\frac{\beta_1}2+\left(\cos t-\cos t_1\right)\sum_{j=1}^{n-1}\beta_j\cos jt ,
$$
$$
S^0(t)=\left(\cos t-\cos t_1\right)\sum_{j=1}^{n-1}\beta_j\sin jt
$$
where 
$$
-\frac{\beta_1}2+\left(1-\cos t_1\right)\sum_{j=1}^{n-1}\beta_j=1, \quad 
C^0(t_1)=-\frac{\beta_1}2>1
$$
and
$$
\quad C^0(\pi)= -\frac{\beta_1}2-\left(1+\cos t_1\right)
\sum_{j=1}^{n-1}(-1)^j\beta_j <0.
$$
Therefore
$$
\sum_{j=1}^{n-1}\beta_j=\frac{\frac{\beta_1}2+1}{1-\cos t_1}<0,\quad \sum_{j=1}^{n-1}(-1)^j\beta_j>0.
$$
Consider a family
$$
C(\theta,t)=\frac{-\frac{\beta_1}2+\left(\cos t-\cos\theta\right)\sum_{j=m}^{n-1}\beta_j\cos jt }
{-\frac{\beta_1}2+\left(1-\cos\theta\right)\sum_{j=1}^{n-1}\beta_j},
$$
$$
S(\theta,t)=\frac{\left(\cos t-\cos\theta\right)\sum_{j=1}^{n-1}\beta_j\sin jt }
{-\frac{\beta_1}2+\left(1-\cos\theta\right)\sum_{j=1}^{n-1}\beta_j}.
$$
It is clear that $C(t_1,t)\equiv C^0(t)$ and $S(t_1,t)\equiv S^0(t).$

Let compute
$$
C(\theta,\pi)=\frac{-\frac{\beta_1}2-\left(1+\cos\theta\right)\sum_{j=1}^{n-1}(-1)^j\beta_j}
{-\frac{\beta_1}2+\left(1-\cos\theta\right)\sum_{j=1}^{n-1}\beta_j}.
$$
The function $C(\theta,\pi)$ is either monotonic on $(0,\pi)$ as a rational function of $\cos\theta$ or is identically constant. 

If it is monotonic, then there exists $\theta_1$ such that $C(\theta_1,\pi)>C(t_1,\pi)=C^0(\pi).$
Thus, the pair $\left\{ S^0(t), C^0(t) \right\}$ is not the optimal.

Let now $C(\theta,\pi)$ be a constant. Since $C(t_1,\pi)=C^0(\pi)<0$ then  $C(\theta,\pi)<0.$ Therefore
$$
\lim_{\theta\to\pi}C(\theta,\pi)=\frac{-\frac{\beta_1}2}{-\frac{\beta_1}2+2\sum_{j=1}^{n-1}\beta_j}<0.
$$
Since $-\dfrac{\beta_1}2>0,$ the function $-\frac{\beta_1}2+\left(1-\cos \theta\right)\sum_{j=1}^{n-1}\beta_j$ is positive at zero and negative for $\theta$ close to $\pi,$ because the denominator of the above fraction is a value of the function at $\pi.$ Therefore, there is zero value and then there exists $\theta_2$ such that 
$$
C(\theta_2,\theta_2)=\frac{-\frac{\beta_1}2}{-\frac{\beta_1}2+(1-\cos\theta_2)\sum_{j=1}^{n-1}\beta_j}>2^n.
$$
Since the absolute value of a trigonometric polynomial does not exceed the sum of the absolute values of the coefficients, then the sum of the absolute values of the coefficients of the polynomial
$C(\theta_2,t)$ is bigger then $2^n.$ 
 This means that either some of the functions $C(\theta;t_j), j=2,\ldots,q,$ become less then one for certain  $\theta\in(t_1,\theta_2)$ being positive,
 or Lemma 1 implies that  $C(\theta_2,\pi)<-1.$ In any of those cases the quantities  $C(\theta_2,\pi)=C(t_1,\pi)=C^0(\pi)$ cannot be extremal. 

Thus, it is shown that the set $\mathcal T$ is empty, therefore for the optimal pair  $\left\{C^0(t),\,S^0(t)\right\}$ we have $S^0(t)>0$, $t\in(0,\pi).$ The lemma is proved.
\end{proof}

\medskip

Trigonometric polynomial $S^0(t)$ can be written as
$$
S^0(t)=\sin t\cdot\left(\gamma^0_1+2\gamma^0_2\cos t+\dots+2\gamma^0_n\cos(n-1)t\right),
$$
where $\gamma^0_s=\sum a^0_j$ and the summation runs over indices  $s\le j\le n$ of same parity 
with $s,\, s=1,...,n.$ There is a bijection between $a_1,\dots,a_n$ and $\gamma_1,\dots,\gamma_n.$ The normalization $a_1+...+a_n=1$ implies $\gamma^0_1+\gamma^0_2=1.$

Since $\mathcal T=\emptyset$ then
\[
\overline{\rho }_{1} =\mathop{\max }\limits_{(a_{1} ,...,a_{n} )\in A_{R} } \left\{\, C(\pi ): S(t)>0, \,t\in(0,\pi) \right\}=
\]
\[
\mathop{\max }\limits_{(a_{1} ,...,a_{n} )\in A_{R} } \left\{-a_{1} +a_{2} -a_{3} +... : S(t)>0, \,t\in(0,\pi)\right\}.
\] 
Note that $-a_{1} +a_{2} -a_{3} +\, \, \ldots =-\gamma _{1} +\gamma _{2} $.

Since ${S^0(t)}/{\sin t}$  is non-negative, the well-known Fej\'er inequality for non-negative polynomials ~\cite{F1}
(see also~\cite{PS} 6.7, Problem 52) implies that
$$
\left|\gamma_2\right|\le\cos\frac\pi{n+1}\cdot\left|\gamma_1\right|.
$$
From here we get
$$
\overline{\rho }_{1}\le \overline{\rho }_{2}:= \max_{\gamma_1,\gamma_2}\left\{-\gamma_1+\gamma_2:\ \gamma_1+\gamma_2=1,\,
\left|\gamma_2\right|\le\cos\frac\pi{n+1}\cdot\left|\gamma_1\right|\right\}.
$$
The conditional maximum is achieved for 
$$
\gamma_1^0=\frac1{1+\cos\frac\pi{n+1}},\quad
\gamma_2^0=\frac{\cos\frac\pi{n+1}}{1+\cos\frac\pi{n+1}},
$$
and is equal to
$$
\overline{\rho }_{2}=-\frac{1-\cos\frac\pi{n+1}}{1+\cos\frac\pi{n+1}}=-\tan^2\frac\pi{2(n+1)}.
$$
The polynomial  ${S^{0} (t)}/{\sin t} =\gamma _{1}^{0} +2\gamma _{2}^{0} \cos t+...+2\gamma _{n}^{0} \cos (n-1)t$ is a non-nagative Fej\'er polynomial and its coefficients 
$a_{1}^{0} ,\, \ldots \, ,a_{n}^{0} $ are determined in a unique way: $a_{1}^{0} =\gamma _{1}^{0} -\gamma _{3}^{0} ,\; a_{2}^{0} =\gamma _{2}^{0} -\gamma _{4}^{0} ,\; a_{3}^{0} =\gamma _{3}^{0} -\gamma _{5}^{0} \, \, \ldots $. Since $a_{j}^{0} $ are positive then $\sum _{j=1}^{n}\left|a_{j}^{0} \right|= \sum _{j=1}^{n}a_{j}^{0} = 1$  which is independent of $R.$ Therefore for any
$a_{1} ,\, \ldots \, \, ,\, \, a_{n} $  such that  $\sum _{j=1}^{n}\left|a_{j} \right| =1,$ the following inequalities are valid
\[\rho _{1} (a_{1} ,\, \ldots \, ,a_{n} )\le \overline{\rho _{1} },\quad \rho (a_{1} ,\, \ldots \, ,a_{n} )\le \overline{\rho }\le \overline{\rho _{1} }.\] 

To prove that the function $\rho (a_{1} ,\, \ldots \, ,a_{n} )$ the supremum is achieved and is equal to $\overline{\rho _{1} },$ let consider one-parametric family of trigonometric polynomials
$$
S^\varepsilon(t)=\frac{a_1^0+\varepsilon}{1+\varepsilon}\sin t+
\frac{a_2^0}{1+\varepsilon}\sin 2t+\dots+
\frac{a_n^0}{1+\varepsilon}\sin nt.
$$
It is clear that $\frac{a_1^0+\varepsilon}{1+\varepsilon}+ \frac{a_2^0}{1+\varepsilon}+\dots+
\frac{a_n^0}{1+\varepsilon}=1$ and
$S^\varepsilon(t)=\frac{S^0(t)}{1+\varepsilon}+\frac\varepsilon{1+\varepsilon}\sin t$,
$C^\varepsilon(t)=\frac{C^0(t)}{1+\varepsilon}+\frac\varepsilon{1+\varepsilon}\cos t$.
Now, for all $t\in(0,\pi)$ and $\varepsilon>0$ we have $S^\varepsilon(t)>0$. Since
$C^\varepsilon(\pi)=\frac{\overline{\rho _{2} }}{1+\varepsilon}+\frac\varepsilon{1+\varepsilon}$ then  $C^\varepsilon(\pi)<\overline{\rho _{1} }$ and
$C^\varepsilon(\pi)\to\overline{\rho _{2} }$ as $\varepsilon\to 0+.$ Therefore $\overline{\rho _2}\le \overline{\rho _1},$ and thus
$\overline{\rho _2}= \overline{\rho _1}.$ 
These relations plus independence of the coefficients from $R$ means that
$$
J_1= \overline{\rho}= \overline{\rho _{1} }=\sup_{\sum a_j=1}\left\{\rho(a_1,\dots,a_n)\right\}=-\tan^2\frac\pi{2(n+1)}.
$$
The theorem is proved.
\end{proof}

Formula \eqref{8} implies that 
$$
\mu_n(1)=\cot^2\frac\pi{2(n+1)}.
$$

\textbf{\textit{Corollary}}. Let conjugate trigonometric polynomials \eqref{10}
be normalized by the condition $\sum _{j=1}^{n}a_{j} =1.$ And let $\mathcal T$ be the set of sign 
changes for the function $S(t)$ on  $\left(0,\, \pi \right)$. Then
\[\mathop{\max }\limits_{a_{1} ,\ldots ,a_{n} } \mathop{\min }\limits_{} \left\{\, \, C(t):\, t\in\mathcal T\cup \left\{\pi \right\}\, \right\}=-\tan^{2} \frac{\pi }{2(n+1)} ,\] 
and the coefficients of the extremal pairs of the trigonometric polynomials are defined uniquely by the formulas
\begin{equation} \label{13}
a_{j}^{0} =2\cdot \tan\frac{\pi }{2(n+1)} \cdot (1-\frac{j}{n+1} )\cdot \sin \frac{\pi j}{n+1} ,\; j=1,\, \ldots \, ,n.               
\end{equation} 

To prove the corollary it is enough to find the coefficients $a_{1}^{0} ,\, \ldots \, ,\, a_{n}^{0} $. The polynomial $\dfrac{S^{0} (t)}{\sin t} $ is proportional to the Fej\'er polynomial  $\Phi_n^{(1)}(t)$:
$$
\frac{S^{0} (t)}{\sin t} =\frac{1}{1+\cos \frac{\pi }{n+1} } +\frac{2\cos \frac{\pi }{n+1} }{1+\cos \frac{\pi }{n+1} } \cos t +...=
$$
$$
\frac{1-\cos \frac{\pi }{n+1} }{n+1} \cdot \frac{2\cos ^{2} \frac{n+1}{2} t}{(\cos t-\cos \frac{\pi }{n+1} )^{2} } =\gamma _{1}^{0} +2\gamma _{2}^{0} \cos t+...\, \, .
$$
From here the coefficients $\gamma _{1}^{0} ,\, \, \ldots \, \, ,\gamma _{n}^{0} ,\; a_{1}^{0} ,\, \ldots \, ,\; a_{n}^{0} $ are determined by the formulas
\[\gamma _{j}^{0} =\frac{(n-j+3)\sin \frac{\pi {\kern 1pt} j}{n+1} -(n-j+1)\sin \frac{\pi (j-2)}{n+1}}{2(n+1)\sin \frac{\pi }{n+1} \cdot (1+\cos \frac{\pi }{n+1} )},\] 
\[a_{j}^{0} =\gamma _{j}^{0} -\gamma _{j+2}^{0} =2\cdot \tan\frac{\pi }{2(n+1)} \cdot (1-\frac{j}{n+1} )\cdot \sin \frac{\pi \, j}{n+1} ,\; j=1,\, \, \ldots \, \, ,n.\] 
It  is assumed that $\gamma _{n+1}^{0} =\gamma _{n+2}^{0} =0$.\\

\textbf{\textit{Example}} For $n=5$ by the formula  \eqref{13}
we get $a_{1}^{0} =\frac{5}{6} \cdot \tan\frac{\pi }{12} $, $a_{2}^{0} =\frac{2\sqrt{3} }{3} \cdot \tan\frac{\pi }{12} $, $a_{3}^{0} =\tan\frac{\pi }{12} $, $a_{4}^{0} =\frac{\sqrt{3} }{3} \cdot \tan\frac{\pi }{12} $, $a_{5}^{0} =\frac{1}{6} \cdot \tan\frac{\pi }{12} $. The graph and a fragment of the image of the unit circle is displayed on the Figures 1 and 2.

\begin{figure}[ht]
\begin{minipage}[b]{0.45\linewidth}
\includegraphics[scale=0.15]{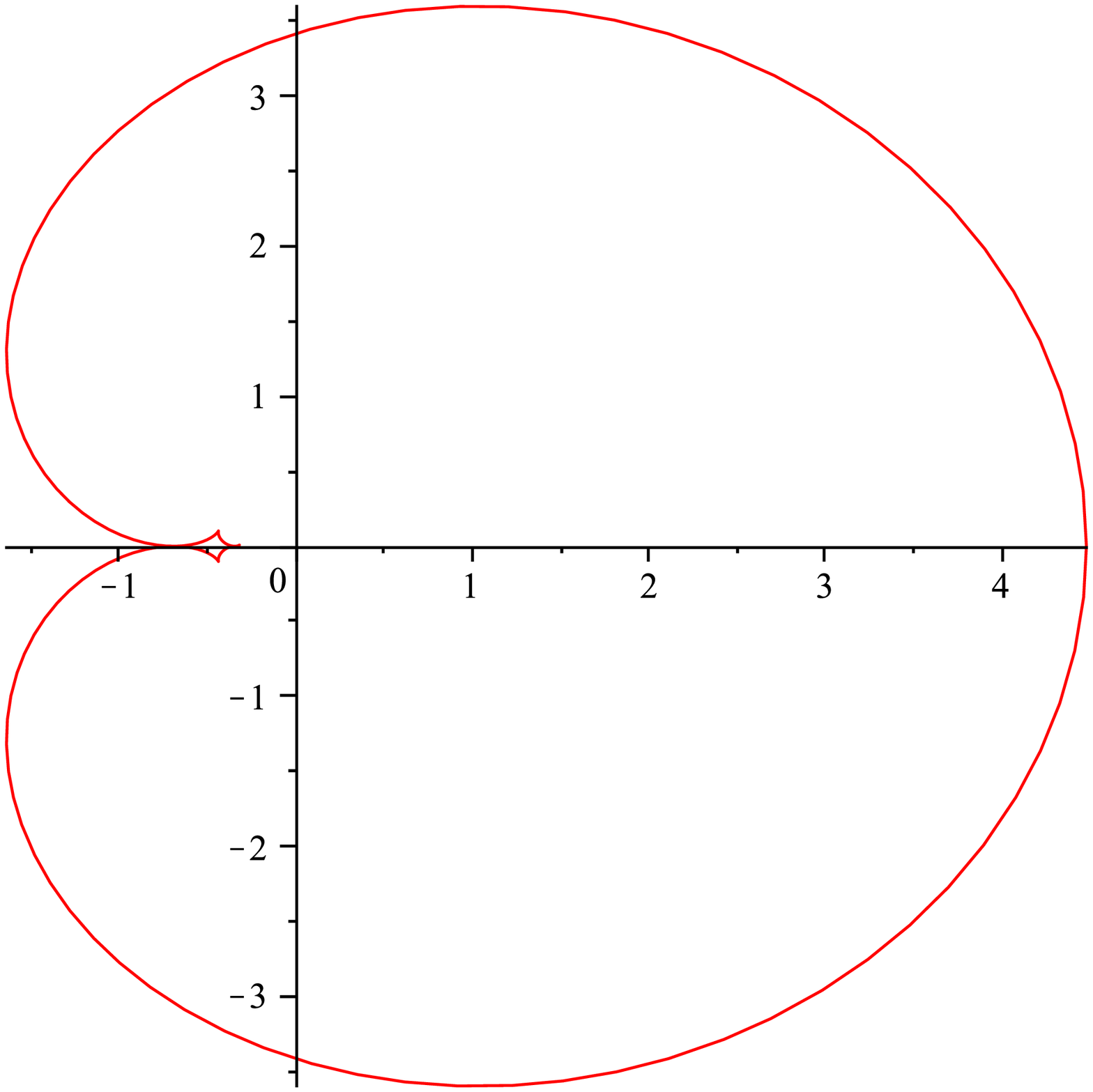}
\caption{Graph of the curve $x(t)=\Re\left(\sum _{j=1}^{n}a_{j}^{0} e^{-ijt}  \right),\, \, y(t)=\Im\left(\sum _{j=1}^{n}a_{j}^{0} e^{-ijt}  \right)$ for $n=5$ $t\in \left[0,\, 2\pi \right]$.}
\label{Fig1}
\end{minipage}
\;
\begin{minipage}[b]{0.45\linewidth}
\includegraphics[scale=0.15]{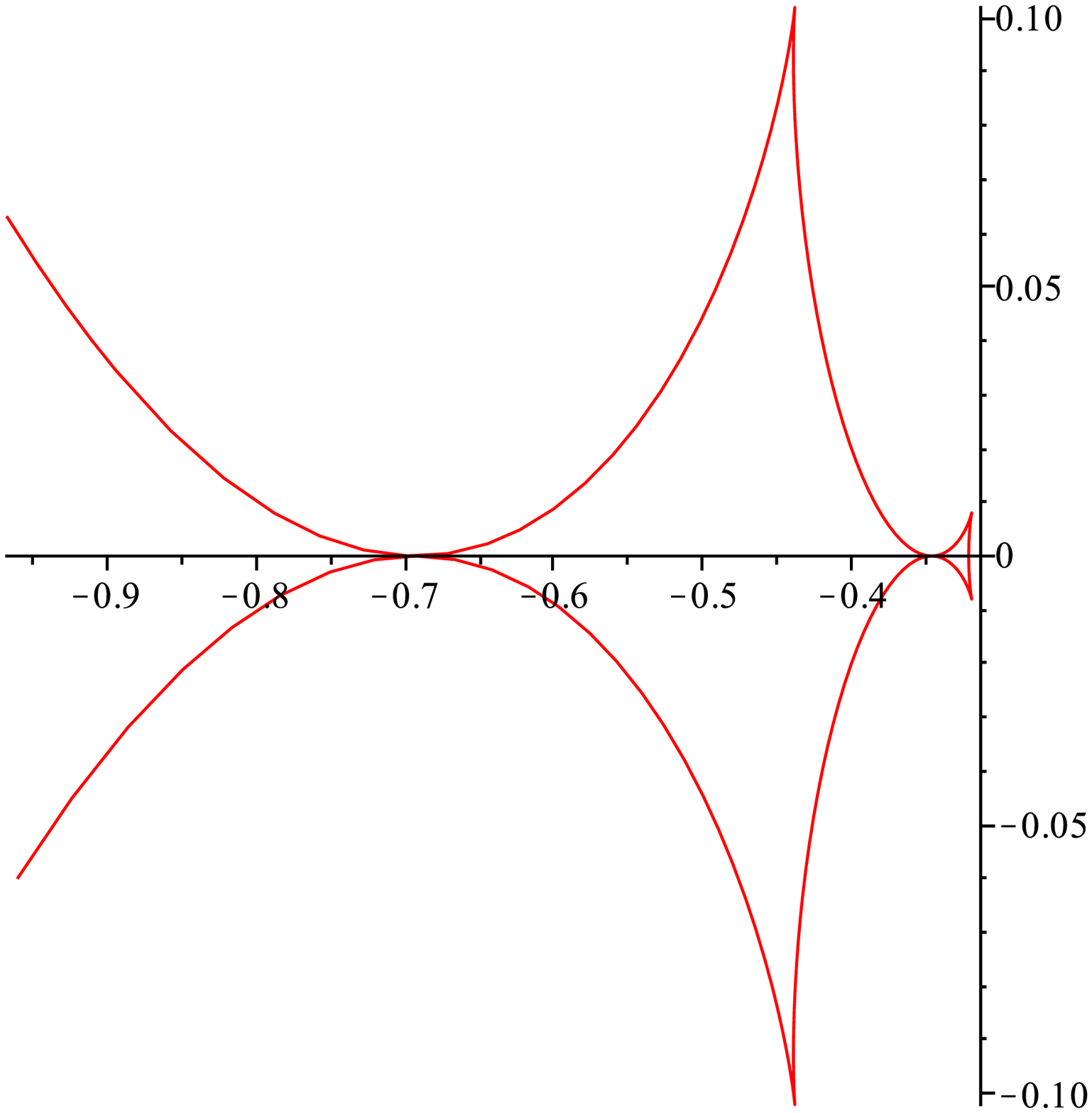}
\caption{ Graph of the curve $x(t)=\Re\left(\sum _{j=1}^{n}a_{j}^{0} e^{-ijt}  \right),\, \, y(t)=\Im\left(\sum _{j=1}^{n}a_{j}^{0} e^{-ijt}  \right)$ for $n=5,$ $t\in \left[1.43,\, 4.85\right]$. }
\label{Fig2}
\end{minipage}
\end{figure}


Theorem 2 defines the optimal polynomial mapping for T=1:
\begin{equation} \label{14}
z\cdot p_{0}^{(1)} \left(z\right)=2\cdot \tan\frac{\pi }{2(n+1)} \cdot \sum _{j=1}^{n}(1-\frac{j}{n+1} )\cdot \sin \frac{\pi \, j}{n+1} \cdot z^{j},                      
\end{equation} 
and 
$$
\Im\left\{e^{it} \cdot p_{0}^{(1)} \left(e^{it} \right)\right\}=\frac{2\left(1-\cos \frac{\pi }{n+1} \right)}{n+1} \cdot \sin t\cdot \Phi _{n-1}^{(1)} \left(t\right).
$$
Note that the polynomials \eqref{14} are not new. Suffridge \cite{Su} used polynomials
\begin{equation} \label{15}
s_{k,n} (z)=z+\frac{1}{\sin \frac{k\pi }{n+1} } \cdot \sum _{j=2}^{n}\frac{n-j+1}{n} \cdot \sin \frac{k\pi \, j}{n+1} \cdot z^{j}   
\end{equation} 
to obtain sufficient conditions of the univalency for the polynomials with the first coefficient 1 and with the  leading  coefficient $a_n=1/n.$  D.Dimitrov \cite{D1}  established a relation between
the polynomials  \eqref{15} for $k=1$ and Fej\'er kernels $\Phi _{n}^{(1)} \left(t\right).$ From there for the univalent polynomials of the type  $q(z)=z+\, \ldots $ he derived the sharp estimates for the ratio $q(-1)/q(1).$ Let us note that Dimitrov's estimate can be obtained from Theorem 2, however the Theorem 2 does not follow from the results of \cite{D1} because the initially considered mappings are not necessary univalent. Moreover, as it is proven in Lemma 3, a necessary condition of optimality - the image of the upper semi-disc lies in the upper half plain - does not implies the univalency. One can judge about the univalency only at the end of the proof when the optimal polynomial is constructed.

The univalency of the polynomials \eqref{15} was established in \cite{Su} and since 
$$
zp^{(1)}_0(z)=4\frac n{n+1}\cdot\sin^2\frac\pi{2(n+1)}\cdot s_{1,n}(z)
$$ 
then the polynomial \eqref{14} is univalent.

\subsection{T=2 case}

\begin{theorem}\label{trm3}
Let $C(t)$ and $S(t)$ be a pair of conjugate trigonometric polynomials
\begin{equation} \label{16}
C(t)=\sum _{j=1}^{n}a_{j} \cos (2j-1)t,\quad  S(t)=\sum _{j=1}^{n}a_{j} \sin (2j-1)t,  
\end{equation}
normalized by the condition $\sum\limits_{j = 1}^n {{a_j} = 1}.$
Consider the extremal problem
$$
J_2 = \mathop {\inf }\limits_{\sum\limits_{j = 1}^n {{a_j}}  = 1} 
\left[ {\mathop {\max }\limits_{t \in \left[ {0,\frac{\pi }{2}} \right]} \left\{ {|S(t)|:C(t) = 0} 
\right\}} \right].
$$
Then 
$$  
J_2 = \frac1n.
$$
\end{theorem}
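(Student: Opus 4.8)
The plan is to transfer the whole problem into the setting of Theorem~\ref{trm2} by squaring. Put $z=e^{it}$ and $q(z)=z\,p(z^{2})=\sum_{j=1}^{n}a_{j}z^{2j-1}$, so that $q(e^{it})=C(t)+iS(t)$. The engine is the identity
\[
\bigl(e^{it}p(e^{2it})\bigr)^{2}=e^{2it}p(e^{2it})^{2}=P(e^{2it}),\qquad P(w):=w\,p(w)^{2},
\]
a polynomial of degree $2n-1$ with $P(1)=p(1)^{2}=1$. Now $C(t)=0$ means $q(e^{it})\in i\mathbb{R}$, i.e. $P(e^{2it})$ is real and $\le 0$, and at such a point $|S(t)|^{2}=-q(e^{it})^{2}=-P(e^{2it})$. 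Writing $s=2t$ and using the conjugate symmetry to keep $s\in[0,\pi]$, the definition of $J_{2}$ becomes
\[
J_{2}^{2}=-\sup_{p(1)=1}\ \min_{s}\bigl\{\Re P(e^{is}):\Im P(e^{is})=0\bigr\},\qquad P=w\,p^{2},\ \deg p=n-1.
\]
(The inner minimum is negative because, by Lemma~\ref{lm1}, the image curve encloses the origin.) This is exactly the extremal quantity solved in Theorem~\ref{trm2} (with $N=2n-1$ and $(C,S)$ replaced by $(\Re P,\Im P)$), the single difference being that $P$ is no longer free but is constrained to the subclass $\{\,w\cdot(\text{perfect square})\,\}$. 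Since this is a smaller class, Theorem~\ref{trm2} already gives the one-sided estimate $J_{2}\ge\tan\frac{\pi}{4n}$; the content of Theorem~\ref{trm3} is that the square constraint raises the value to the strictly larger number $1/n$, so the proof is really about quantifying the cost of that constraint.

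I would then reprove, inside this restricted class, the analogues of Lemmas~\ref{opt} and~\ref{pos}. Existence of an extremal $p^{0}$ is routine: restrict to $\sum|a_{j}|\le R$, apply Lemma~\ref{lm1} to the degree-$(2n-1)$ polynomial $P$ to bound the coefficients at the extremum independently of $R$, and use upper semicontinuity. The crucial structural step is the analogue of Lemma~\ref{pos}: the extremal $P^{0}=w(p^{0})^{2}$ must satisfy $\Im P^{0}(e^{is})\ge 0$ on $(0,\pi)$, which in the original variables reads $C^{0}(t)\,S^{0}(t)\ge 0$ on $(0,\pi/2)$. I would run the same three-case perturbation argument as in Lemma~\ref{pos}, factoring the conjugate pair $(\Re P^{0},\Im P^{0})$ through Theorem~\ref{trm1}. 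The essential — and, I expect, hardest — difference is that admissible perturbations must preserve the form $w\cdot(\text{square})$: one cannot vary the coefficients of $P$ freely but must move the (doubled) zeros of $p^{0}$, and one has to check that the sign-change bookkeeping and the factorization of Theorem~\ref{trm1} stay consistent with that perfect-square structure.

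Granting $\Im P^{0}\ge 0$ on $(0,\pi)$, the situation mirrors the end of Theorem~\ref{trm2}. The point $s=\pi$ is always an active crossing, since $\Re P^{0}(e^{i\pi})=P^{0}(-1)=-p^{0}(-1)^{2}<0$ and $\Im P^{0}(-1)=0$; by the equioscillation typical of such minimax problems any further interior crossings sit at the same level, so the optimal value is $p^{0}(-1)^{2}$, interior tangencies of $\Im P^{0}$ being absorbed by a limiting family as the family $S^{\varepsilon}$ was used above. Since $S(\pi/2)=\sum a_{j}(-1)^{j-1}=p(-1)$, the problem collapses to
\[
J_{2}=\min\bigl\{\,|p(-1)|:\ p(1)=1,\ \Im\bigl(e^{is}p(e^{is})^{2}\bigr)\ge 0\ \text{on}\ (0,\pi)\,\bigr\},
\]
i.e. to maximizing the ratio $|p(1)|/|p(-1)|$ over polynomials $p$ of degree $n-1$ for which $P=w\,p^{2}$ is typically real. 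This is where the second Fejér kernel must enter, playing the role that $\Phi_{n-1}^{(1)}$ played for $T=1$: the extremal profile of the nonnegative polynomial attached to $p$ is forced to be (a multiple of) a kernel of type $\Phi^{(2)}$, and the sharp constant $n$ should come from the extremal property of $\Phi_{n}^{(2)}$ recalled in the introduction (a nonnegative $\sum_{j=0}^{m}a_{j}\cos js$ with $a_{0}=1$ has maximum at most $m+1$). Carrying $p(1)=1$ through this property should give $|p(-1)|=1/n$, hence $J_{2}=1/n$, and should simultaneously identify the optimal mapping as $z\,(p_{0}^{(2)}(z))^{2}$ with coefficients expressed through $\Phi^{(2)}$.

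The main obstacle is the second step: importing the perturbation argument of Lemma~\ref{pos} into the perfect-square class, where one has lost free control of the coefficients and must reason through the zeros of $p^{0}$. A close second is making the final Fejér reduction land on the $\Phi^{(2)}$-problem with sharp constant $n$ rather than on the $\Phi^{(1)}$-problem with constant $\tan\frac{\pi}{4n}$: applying a Fejér inequality blindly to $\Im P^{0}/\sin s$ (a nonnegative cosine polynomial of degree $2n-2$) only reproduces the unconstrained bound, so the proof must genuinely exploit the square structure — equivalently the typical reality of $P$ — to gain the factor that separates $1/n$ from $\tan\frac{\pi}{4n}$.
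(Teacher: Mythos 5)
Your reduction to the squared mapping $P(w)=w\,p(w)^{2}$ is where the plan goes wrong: it converts a problem that is \emph{linear} in the unknowns into a nonlinear one constrained to the perfect-square class, and the two steps you yourself flag as unresolved obstacles (the perturbation argument inside that class, and extracting the constant $1/n$ rather than $\tan\frac{\pi}{4n}$ from a Fej\'er inequality) are precisely the steps that this detour makes intractable. A proposal whose central lemmas are left as acknowledged open difficulties is not a proof. The paper never squares. It works directly with $e^{it}p(e^{2it})=C(t)+iS(t)$, i.e.\ with the pair \eqref{16}, which depends linearly on $a_{1},\dots,a_{n}$; squaring enters only at the very last moment through $1/\mu_{n}(2)=J_{2}^{2}$. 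Concretely, using $\hat a_{j}=a_{j}-a_{j+1}$ one has $2\sin t\cdot C(t)=\sum_{j}\hat a_{j}\sin 2jt$ and $2\sin t\cdot S(t)=a_{1}-\sum_{j}\hat a_{j}\cos 2jt$, so Theorem~\ref{trm1} applies verbatim to this conjugate pair, and the analogue of Lemma~\ref{pos} (moving the sign-change points $t_{j}\mapsto\theta_{j}$ of $C$ and renormalizing) goes through with free coefficients, exactly as in the $T=1$ case. No zeros of $p$ need to be doubled or tracked.

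Once the structure lemma gives $\mathcal T^{0}=\emptyset$, i.e.\ $C^{0}(t)>0$ on $(0,\pi/2)$, the endgame is also different from what you sketch: one writes $C(t)=\cos t\cdot\bigl(\gamma_{1}+2\gamma_{2}\cos 2t+\dots+2\gamma_{n}\cos 2(n-1)t\bigr)$ with $\gamma_{s}=\sum_{j=s}^{n}(-1)^{s+j}a_{j}$, observes that $S(\pi/2)=\gamma_{1}$ while the normalization forces $C(0)=1$, and applies the \emph{second} Fej\'er extremal property directly --- a nonnegative cosine polynomial of degree $n-1$ with constant term $\gamma_{1}$ has maximum at most $n\gamma_{1}$ --- evaluated at $t=0$ to get $1\le n\gamma_{1}$, hence $|S(\pi/2)|\ge 1/n$, with equality for the kernel $\Phi_{n-1}^{(2)}(2t)$. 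There is no competition between $\Phi^{(1)}$ and $\Phi^{(2)}$ to resolve: the \emph{maximum-value} form of Fej\'er's theorem (constant $n$) is what the unsquared formulation naturally calls for, while the \emph{first-coefficient} form (constant $2\cos\frac{\pi}{n+2}$, hence the tangent) belongs to the $T=1$ problem. Your instinct that the answer is governed by $\Phi^{(2)}$ is right, but to reach it you should abandon the squared class entirely and pose the extremal problem for the linear family \eqref{16}, as the statement of Theorem~\ref{trm3} already does.
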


The function  $C(t)$ turns to be zero for $t=\frac{\pi }{2} $ and any $a_{1} ,\, \ldots \, ,\, a_{n} $. Denote 
$$
\rho (a_{1} ,\, \ldots \, ,a_{n} )=\mathop{\max }\limits_{t\in \left[\, 0,\, \frac{\pi }{2} \, \right]} \left\{S(t):C(t)=0\, ,\, S(t)>0\right\}.
$$ 
It is clear that 
$$
J_2\ge \mathop{\inf }\limits_{(a_{1} ,\, \ldots \, ,\, a_{n} )} \, \left\{\, \rho (a_{1} ,\, \ldots \, ,a_{n} )\, \right\}.
$$ 
The quantity  $\inf \, \left\{\, \rho (a_{1} ,\, \ldots \, ,a_{n} )\, \right\}$ will be minimized on the set  
$$A_{R} =\left\{\left(a_{1} ,\, \ldots \, ,a_{n} \right):\sum _{j=1}^{n}a_{j} =1,\; \sum _{j=1}^{n}\left|a_{j} \right|\le R  \right\},$$ where  $R$ is a sufficiently large number.

For $t\in \left(0,\, \frac{\pi }{2} \right)$ the following inequalities
\[\begin{array}{l} {C(t)=\frac{1}{2\sin t} \sum _{j=1}^{n}\hat{a}_{j} \sin 2jt\, \,  ,\, \, \, \, } \\ {S(t)=\frac{1}{2\sin t} \left(\sum _{j=1}^{n}\hat{a}_{j} -\sum _{j=1}^{n}\hat{a}_{j} \cos 2jt  \right)\, ,} \end{array}\] 
 are valid, where $\hat{a}_{j} =a_{j} -a_{j+1} ,\, \, j=1,\, \ldots \, ,\, n$ (consider $a_{n+1} =0$). It is easy to see that  $\sum _{j=1}^{n}\hat{a}_{j} =a_{1}.$ \\

Since trigonometric polynomials  $2\sin t\cdot \, C(t)$ and $\sum _{j=1}^{n}\hat{a}_{j} -2\sin t\, \cdot S(t) $  are conjugate we can apply Theorem 1 and prove the following lemma

\begin{lemma}
Let $C(t_{1} )=\, 0,\; \; S(t_{1} )=\, \, \gamma \, $, where $t_{1} \in \left(0,\, \frac{\pi }{2} \right)$. Then trigonometric polynomials \eqref{16}
admit the presentation
\[C(t)=\frac{1}{2\sin t} \cdot (\cos 2t-\cos 2t_{1} )\sum _{j=1}^{n-1}\alpha _{j}  \sin 2jt,\] 
\[S(t)=\frac{1}{2\sin t} \left(a_{1} +\frac{\alpha _{1} }{2} -(\cos 2t-\cos 2t_{1} )\sum _{j=1}^{n-1}\alpha _{j}  \cos 2jt\right),\] 
where  $\alpha _{j} ,\, j=1,\, \ldots \, ,\, n-1$ are uniquely determined by the coefficients $a_{1} ,\, \ldots \, ,a_{n} $ and parameter $\, \gamma \, $, moreover  $\dfrac{\alpha _{1} }{2} =2\gamma \sin t_{1} -a_{1} $. 
\end{lemma}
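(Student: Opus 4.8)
The plan is to reduce the claim to Theorem~\ref{trm1} by the frequency-doubling substitution $\tau = 2t$, exactly as the conjugacy remark preceding the lemma suggests. First I would introduce the conjugate pair flagged there: the cosine polynomial $\mathcal{C}(\tau) = \sum_{j=1}^n \hat{a}_j \cos j\tau$ and the sine polynomial $\mathcal{S}(\tau) = \sum_{j=1}^n \hat{a}_j \sin j\tau$, with $\hat{a}_j = a_j - a_{j+1}$ and $\tau = 2t$. From the two displayed representations of $C(t)$ and $S(t)$ one reads off the identities $2\sin t\cdot C(t) = \mathcal{S}(2t)$ and $a_1 - 2\sin t\cdot S(t) = \mathcal{C}(2t)$, the latter using $\sum_{j=1}^n \hat{a}_j = a_1$. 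Thus prescribing values of $C$ and $S$ at $t_1$ is the same as prescribing values of the genuine conjugate pair $(\mathcal{C},\mathcal{S})$ at $\tau_1 = 2t_1$, to which Theorem~\ref{trm1} applies directly.

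Next I would translate the hypotheses. The condition $C(t_1) = 0$ becomes $\mathcal{S}(\tau_1) = 0$, while $S(t_1) = \gamma$ becomes $\mathcal{C}(\tau_1) = a_1 - 2\gamma\sin t_1 =: \gamma'$. Since $t_1 \in (0,\pi/2)$ we get $\tau_1 = 2t_1 \in (0,\pi)$, so the node lies in the interval required by Theorem~\ref{trm1}; with a single node we have $m = 1$, so the constraint $2m \le n$ reduces to $n \ge 2$, and the coefficient count matches ($\hat{a}_1,\dots,\hat{a}_n$ are $n$ real coefficients).

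Now I would apply Theorem~\ref{trm1} to $(\mathcal{C},\mathcal{S})$ with $m=1$, node $\tau_1$, and constant $\gamma'$. This yields $\mathcal{C}(\tau) = \gamma' + (\cos\tau - \cos\tau_1)\sum_{k=1}^{n-1}\alpha_k\cos k\tau$ and $\mathcal{S}(\tau) = (\cos\tau - \cos\tau_1)\sum_{k=1}^{n-1}\alpha_k\sin k\tau$, where the theorem gives $\alpha_1 = -2\gamma'$ and the $\alpha_k$ uniquely determined by $\gamma'$ and the $\hat{a}_j$, hence by $\gamma$ and $a_1,\dots,a_n$. Substituting $\tau = 2t$ and dividing by $2\sin t$ reproduces the stated formula for $C(t)$ verbatim. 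For $S(t)$ I would write $a_1 - \mathcal{C}(2t) = (a_1 - \gamma') - (\cos 2t - \cos 2t_1)\sum_{k=1}^{n-1}\alpha_k\cos 2kt$ and use $\alpha_1 = -2\gamma'$ to replace $a_1 - \gamma' = a_1 + \frac{\alpha_1}{2}$, giving the claimed expression; the final relation $\frac{\alpha_1}{2} = 2\gamma\sin t_1 - a_1$ is then just $-\gamma' = \frac{\alpha_1}{2}$ unwound through $\gamma' = a_1 - 2\gamma\sin t_1$.

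The only real difficulty is bookkeeping rather than mathematics: one must keep the prescribed value $\gamma = S(t_1)$ distinct from the constant $\gamma' = a_1 - 2\gamma\sin t_1$ that is actually fed into Theorem~\ref{trm1}, and carry the factor $1/(2\sin t)$ carefully so that the doubled-frequency polynomials reassemble into $C$ and $S$. Once the substitution $\tau = 2t$ is made explicit, the entire statement is a transcription of Theorem~\ref{trm1}.
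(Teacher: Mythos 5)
Your reduction via $\tau=2t$ to the conjugate pair $\mathcal S(2t)=2\sin t\cdot C(t)$, $\mathcal C(2t)=a_1-2\sin t\cdot S(t)$ with $\hat a_j=a_j-a_{j+1}$, followed by an application of Theorem~\ref{trm1} with $m=1$ and $\gamma'=a_1-2\gamma\sin t_1$, is exactly the argument the paper intends (the paper merely records the conjugacy of these two polynomials and invokes Theorem~\ref{trm1}). Your bookkeeping, including $\alpha_1=-2\gamma'$ and hence $\frac{\alpha_1}{2}=2\gamma\sin t_1-a_1$, is correct.
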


Together with the function $\rho (a_{1} ,\, \ldots \, ,a_{n} )$ we will consider the function
\[\rho _{1} (a_{1} ,\, \ldots \, ,a_{n} )=\mathop{\max }\limits_{t\in \left[\, 0,\, \frac{\pi }{2} \, \right]} \left\{S(t):t\in\mathcal T\cup \left\{\frac{\pi }{2} \right\}\, \right\},\] 
where $\mathcal T$ is the set of the points from the interval $(\, 0,\, \frac{\pi }{2} \, )$ where the function $C(t)$ changes the sign and the function $S(t)$ is positive.

\begin{lemma}\label{lm5} There exists a pair of trigonometric polynomials  $\left\{C^{0} (t),S^{0} (t)\right\}$, such that
\[
\bar\rho_1:=\mathop{\inf }\limits_{(a_{1} ,\, \ldots \, ,\, a_{n} )} \left\{\rho _{1} (a_{1} ,\, \ldots \, ,a_{n} )\right\}=\mathop{\max }\limits_{t\in \left[\, 0,\, \frac{\pi }{2} \, \right]} \left\{S^{0} (t) : t \in\mathcal T^{0} \cup \left\{\frac{\pi }{2} \right\}\, \right\},
\] 
where $\mathcal T^{0} $ is an intersection of the sets of  sign changes  of function  $C^{0} (t)$ over the interval $(0,\frac{\pi }{2})$, and the set of positivity of the function $S^{0} (t)$.  
\end{lemma}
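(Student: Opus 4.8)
The plan is to reproduce, with the obvious change of sign and exchange of the roles of $C$ and $S$, the argument used for Lemma \ref{opt} in the case $T=1$: the infimum of $\rho_1$ is attained because $\rho_1$ is lower semicontinuous on the compact slice $A_R$, while Lemma \ref{lm1} confines every near-minimizer to a bounded set of coefficients, so that the optimal pair does not depend on $R$.

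First I would examine the variation of $\rho_1(a_1,\dots,a_n)=\max\{S(t):t\in\mathcal T\cup\{\pi/2\}\}$ under small perturbations of the coefficients. The endpoint $\pi/2$ is a permanent competitor, since $C(\pi/2)=0$ for every choice of coefficients and $S(\pi/2)=a_1-a_2+a_3-\cdots$ depends continuously on them. If the maximum defining $\rho_1$ is attained at a transversal sign change $t^\ast\in(0,\pi/2)$ of $C$ with $S(t^\ast)>0$, then this crossing survives under a small perturbation, its location moves continuously, and the value of $S$ there varies continuously; hence $\liminf\rho_1\ge\rho_1$ at such a configuration. The only configurations at which this reasoning can fail are the degenerate ones in which a maximizing sign change of $C$ is a double zero (a tangency of the curve $(C(t),S(t))$ to the imaginary axis) or coincides with a zero of $S$; exactly as in Lemma \ref{opt}, at these points the lower limit of $\rho_1$ equals its value, so $\rho_1$ is semicontinuous from below in the sense needed to attain an infimum, and therefore $\bar\rho_1$ is realized at some $(a_1^0,\dots,a_n^0)\in A_R$.

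Next I would bound the coefficients of a minimizer by means of Lemma \ref{lm1}. The curve $(C(t),S(t))$ is the image $F(e^{it})$ of the unit circle under $F(z)=\sum_{j=1}^n a_jz^{2j-1}$, and $F(1)=\sum a_j=1$; by Lemma \ref{lm1} the image $F(\mathbb D)$ contains the disc about the origin of radius $2^{-n}\sum_{j=1}^n|a_j|$. If this radius were large, the boundary curve would have to meet the positive imaginary axis at a height at least equal to it, producing a zero of $C$ with a correspondingly large value of $S>0$, whence $\rho_1$ would be large. Since $\bar\rho_1$ is bounded above by the value of $\rho_1$ on any fixed admissible polynomial (the Fej\'er-type candidate that will subsequently be shown to yield $1/n$), no coefficient vector with $\sum_{j=1}^n|a_j|$ exceeding a fixed multiple of this bound can compete for the infimum. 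Thus for all sufficiently large $R$ the infimum is attained at an interior point of $A_R$, the extremal pair $\{C^0(t),S^0(t)\}$ is independent of $R$, and $\mathcal T^0$ is the intersection of the sign-change set of $C^0$ on $(0,\pi/2)$ with the positivity set of $S^0$, as asserted.

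The delicate step is the semicontinuity, because the competing set $\mathcal T$ depends discontinuously on the coefficients: sign changes of $C$ can be created or annihilated in pairs through tangencies as the parameters move. Controlling the behavior of the maximizing value across such a tangency, and verifying that no value is lost in the limit so that the infimum is genuinely attained rather than merely approached, is the part that requires care; it is carried out by tracking the competing values through the factorization supplied by the preceding lemma (the $T=2$ form of Theorem \ref{trm1}), precisely as the analogous discontinuities were resolved in the proof of Lemma \ref{opt}.
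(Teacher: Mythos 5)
Your argument is essentially the paper's own proof: lower semicontinuity of $\rho_1$ on the compact slice $A_R$ (with the lower limit equal to the function's value at the degenerate tangency configurations) gives attainment of the minimum, and Lemma~\ref{lm1} confines any minimizer to the interior of $A_R$ once $R$ is large, so the extremal pair $\{C^0,S^0\}$ is independent of $R$. The one slip is the radius quoted from Lemma~\ref{lm1}: here $F(z)=\sum_{j=1}^n a_jz^{2j-1}$ has degree $2n-1$, so the disc has radius $2^{-(2n-1)}\sum_{j=1}^n|a_j|$ (whence the paper's threshold $R>2^{2n-1}$), not $2^{-n}\sum_{j=1}^n|a_j|$; this does not affect the conclusion.
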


\begin{proof}  Let 
$$
A_{R} =\left\{\left(a_{1} ,\, \ldots \, ,a_{n} \right):\sum _{j=1}^{n}a_{j} =1,\; \sum _{j=1}^{n}\left|a_{j} \right|\le R  \right\}.
$$ 
The function $\rho _{1} (a_{1} ,\, \ldots \, ,a_{n} )$ is continuous on the set  $A_{R} $, beside  $\left(\, a_{1} ,\, \, \ldots \, \, ,a_{n} \right)$, for which the maximal value of  $S(t)$ is achieved at zeros of the function  $C(t)$ where it does not change the sign. The lower limit of the function  $\rho _{1} (a_{1} ,\, \ldots \, ,a_{n} )$ at the point of discontinuity is equal to the value of the function. So, it does achieve a minimal value
$\overline{\rho _{1} }$ on the set  $A_{R} $, i.e. $\overline{\rho _{1} }=\mathop{\min }\limits_{(a_{1} ,\, \ldots \, ,a_{n} )\in A_{R} } \left\{\rho _{1} (a_{1} ,\, \ldots \, ,a_{n} ) \right\}$.

It is clear that  $0\le \overline{\rho _{1} }\le 1$ and by Lemma 1 for $R>2^{2n-1} $ the maximum is achieved in the internal point of the set  $A_{R}. $ The optimal pair 
 $\left\{C^{0} (t),S^{0} (t)\right\}$, where this maximum is achieved, is independent of  $R$.  Lemma \ref{lm5} is proved. 
 \end{proof}

Since $\mathcal T\cup \left\{\frac{\pi }{2} \right\}$ is a subset of all zeros of the functions $C(t)$, then  $\overline{\rho }\ge \overline{\rho _{1} }$, where $\overline{\rho }=\mathop{\inf }\limits_{(a_{1} ,\, \ldots \, .,a_{n} )\in A_{R} } \left\{\, \, \rho (a_{1} ,\, \ldots \, ,a_{n} )\, \, \right\}$. 

\begin{lemma}\label{pos1}
If a polynomial $C(t)$ has zero in $(0, \pi/2),$ then it cannot be optimal.
\end{lemma}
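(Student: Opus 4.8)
Lemma \ref{pos1} (plan). The statement is the $T=2$ analogue of Lemma \ref{pos} (the $T=1$ case). So my plan is to mirror that proof almost verbatim, replacing the role of $C(t)$ by $S(t)$ and of $S(t)$ by $C(t)$, and tracking the sign conventions carefully because here we are \emph{minimizing} $\rho_1$ (an $\inf$) rather than maximizing, and the relevant extremizer is the \emph{largest} value of $S^0(t)$ on $\mathcal T^0\cup\{\pi/2\}$.

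Let me sketch the structure.

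First I would assume, for contradiction, that the optimal polynomial $C^0(t)$ has a nonempty set $\mathcal T^0=\{t_1,\dots,t_q\}$ of sign-change points in $(0,\pi/2)$ at which $S^0$ is positive. Among the values $S^0(t_1),\dots,S^0(t_q)$ I would single out the \emph{maximal} one — since we want to show $\{C^0,S^0\}$ is not optimal for the infimum, I must produce a perturbation that strictly \emph{decreases} the quantity $\max\{S^0(t_j),\,S^0(\pi/2)\}$. So suppose $\max_j S^0(t_j)=S^0(t_1)=\dots=S^0(t_m)$ with multiplicity $m$, and $S^0(t_j)<S^0(t_1)$ for $j>m$.

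Next, invoking Lemma 3 (the $T=2$ factorization), I would write $C^0$ and $S^0$ in the factored form
$$
C^0(t)=\frac1{2\sin t}(\cos 2t-\cos 2t_1)\cdots(\cos 2t-\cos 2t_m)\sum\alpha_k\sin 2kt,
$$
with the companion expression for $S^0$, carrying the constant $a_1+\tfrac{\alpha_1}2$ that appeared there. Then I would introduce the deformed family $C(\theta_1,\dots,\theta_m;t)$, $S(\theta_1,\dots,\theta_m;t)$ obtained by replacing each $\cos 2t_j$ by $\cos 2\theta_j$ and renormalizing so the coefficient sum stays $1$, exactly as in Case 1 of Lemma \ref{pos}. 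At the nodes $\theta_j$ the value $S(\theta_1,\dots,\theta_m;\theta_j)$ simplifies to a common expression (the cross term vanishes), and I would check its monotonicity in the parameters $\theta_j$; by choosing small increments $\theta_j-t_j$ of the correct sign, the common node value $S(\dots;\theta_j)$ strictly \emph{decreases} below $S^0(t_1)$ while, by continuity, the remaining values at $t_{m+1},\dots,t_q$ and at $\pi/2$ move by less than any prescribed $\delta$. That forces the new maximum strictly below $\bar\rho_1$, contradicting optimality. I expect the proof to split into the same three cases as Lemma \ref{pos}, according to how $S^0(\pi/2)$ compares with the common node value $S^0(t_1)$ and with the threshold $1$, with the delicate Case 3 again using Lemma \ref{lm1}: if the renormalizing denominator can be driven through zero, then $\sum|a_j|$ exceeds $2^{2n-1}$, and Lemma \ref{lm1} forces one of the controlled values out of the admissible range, again breaking extremality.

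The main obstacle will be bookkeeping the sign reversals correctly: because this is an infimum and the functional is now $S$ rather than $C$, every inequality direction and every monotonicity statement flips relative to Lemma \ref{pos}, and one must verify that the sign of $\alpha_1$ (equivalently of $a_1+\tfrac{\alpha_1}2$) forces the node value to move in the \emph{favorable} (decreasing) direction under the perturbation. Getting the monotonicity of $S(\theta_1,\dots,\theta_m;\theta_j)$ and of $S(\dots;\pi/2)$ to point the same way — so that a single choice of $\varepsilon$ simultaneously lowers all the controlling values — is the crux; once that is established the contradiction and hence the conclusion $\mathcal T^0=\emptyset$, i.e. $C^0(t)>0$ on $(0,\pi/2)$, follow exactly as before.
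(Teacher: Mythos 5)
Your plan follows the paper's proof essentially verbatim: assume $\mathcal T^0\neq\emptyset$, factor $C^0,S^0$ via the $T=2$ analogue of Theorem~\ref{trm1}, deform the sign-change node(s) into parameters $\theta$ with renormalization $N(\theta)$, and use monotonicity in $\theta$ to strictly decrease the controlling maximum, splitting into three cases according to how $S^0(t_1)$ compares with $S^0(\pi/2)$ and with $0$. The only cosmetic differences are that the paper perturbs just the single root $t_1$ rather than all $m$ tied roots, and its Case 3 (where $S^0(t_1)<0$) is dispatched by a short sign argument rather than the more delicate Lemma~\ref{lm1} machinery you anticipate.
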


\begin{proof}
Suppose that for the optimal polynomial ${C^0}(t)$ the set $\mathcal T=\{t_1,\dots,t_q\},$ where
$0 \le q \le n - 1$ , is not empty. Assume that  
$$
\max \left\{ {\,\,{S^0}({t_1}),...,{S^0}({t_q})\,}\right\} = {S^0}({t_1}),
$$ 
$$
{S^0}({t_1}) = {S^0}({t_j}),j = 1,...,m\;(1 \le m \le q),
$$
and 
$$
{S^0}({t_1}) > {S^0}({t_j}),\,\,j = m + 1,\,\, \ldots \,,\,q.
$$
The following three cases are possible: 
$$
{S^0}({t_1}) > {S^0}(\frac{\pi }{2}),\,  {S^0}(\frac{\pi }{2})\ge{S^0}({t_1})\ge 0,  {S^0}({t_1})<0.
$$

{\it Case 1.} Accordingly to Theorem 1 
 the trigonometric polynomials ${S^0}(t)$ and ${C^0}(t)$ have the form
$$
{C^0}(t) = \frac{1}{{2\sin t}} {(\cos 2t - \cos 2{t_1})}   \sum\limits_{j = 1}^{n - 1} {\alpha_j} \sin 2jt,
$$
$$
{S^0}(t) = \frac{1}{{2\sin t}}\left( {\alpha _1  + \frac{\alpha_1}{2} -  {(\cos 2t - \cos 2{t_1})}  \sum\limits_{j = 1}^{n - 1} 
{\alpha_j} \cos 2jt} \right).
$$
Since ${S^0}(t_1)=\frac1{2\sin t_1}(\alpha_1+\frac{\alpha_1}2),$ then $\alpha_1+\frac{\alpha_1}2>0.$ And since $C^0(0)=1$ then
$$
(1-\cos 2t_1)\sum_{j=1}^{n-1}j\alpha_j=1,\qquad\mbox{therefore}\qquad \sum_{j=1}^{n-1}j\alpha_j>0.
$$

Let us construct the auxiliary trigonometric polynomials ${S^0}(t)$ and ${C^0}(t)$ 
$$
C(\theta,t)= N(\theta)\frac{1}{{2\sin t}}{(\cos 2t - \cos 2\theta)}  \sum\limits_{j = 1}^{n - 1} {\alpha_j} \sin 2jt,
$$
$$
S(\theta,t) = N(\theta) \frac{1}{{2\sin t}}\left( {\alpha _1  + \frac{\alpha_1}{2} -  {(\cos 2t - \cos 2\theta)}   \sum\limits_{j = 1}^{n - 1} 
{\alpha_j} \cos 2jt} \right),
$$
where the normalization factor  $N(\theta)$ provides the sums of the coefficients for each of the polynomials $C(\theta,t)$ and $S(\theta,t)$ 
to be 1. For the polynomial $C(\theta,t)$ the set of sign changes  is $\mathcal T_{\theta } =\left\{\theta ,\, \, t_{2} ,\, \ldots \, ,\, t_{q} \right\}$. It is clear that $C(t_{1} ;\, t)\equiv C^{0} (t)$, $S(t_{1} ;\, t)\equiv S^{0} (t)$. 
The normalizing factor $N(\theta)$ is determined by the condition $C(\theta,0)=1,$ i.e.
$$
N(\theta)=\frac1{(1-\cos2\theta)\sum_{j=1}^{n-1}j\alpha_j}. 
$$
The polynomials $C(\theta ;\, t)$ and $S(\theta ;\, t)$ can be written in the following form
\[C(\theta ;\, t)=\frac{1}{(1-\cos 2\theta )\sum _{j=1}^{n-1}j\alpha _{j}  } \cdot \frac{1}{2\sin t} (\cos 2t-\cos 2\theta )\sum _{j=1}^{n-1}\alpha _{j} \sin 2jt ,\] 
\[S(\theta ;\, t)=\frac{a_{1} +\frac{\alpha _{1} }{2} -(\cos 2t-\cos 2\theta )\sum _{j=1}^{n-1}\alpha _{j}  \cos 2jt}{\left((1-\cos 2\theta )\sum _{j=1}^{n-1}j\alpha _{j}\right) 2\sin t}.\] 

Let us show that for some $\theta$ the value $\rho _{1}(a_1,\dots,a_n) $ for the pair $\left\{C(\theta ;\, t),\, S(\theta ;\, t)\right\}$ is less then for the pair  $\left\{\, C^{0} (t),\, \, S^{0} (\, t)\right\}$, i.e. the pair  
 $\left\{\, C^{0} (t),\, \, S^{0} (\, t)\right\}$ is not an optimal. Let compute
\[S(\theta ;\, \theta )=\frac{1}{(1-\cos 2\theta )\sum _{j=1}^{n-1}j\alpha _{j}  } \cdot \frac{1}{2\sin \theta } \left(a_{1} +\frac{\alpha _{1} }{2} \right)\] 
\[S(\theta ;\, t_{k} )=\frac{a_{1} +\frac{\alpha _{1} }{2} -(\cos 2t_{k} -\cos 2\theta )\sum _{j=1}^{n-1}\alpha _{j}  \cos 2jt_{k} }{\left((1-\cos 2\theta )\sum _{j=1}^{n-1}j\alpha _{j} \right) 2\sin t_{k} }, k=2,\, \ldots \, ,\, m.\] 
Since $S^{0} (t_{1} )=S^{0} (t_{k} ),k=2,...,m\; $, then
$$
\sum _{j=1}^{n-1}\alpha _{j}  \cos 2jt_{k} =\frac{a_{1} +\frac{\alpha _{1} }{2} }{\sin t_{1} }\cdot \frac{\sin t_{1} -\sin t_{k} }{\cos 2t_{k} -\cos 2t_{1} } >0.
$$ 
Hence, the functions $S(\theta ;\, \theta )$, $S(\theta ;\, t_{k} )$, $k=2,...,m$ are decreasing with respect to the parameter $\theta $. 

Now, let $0<\theta -t_{1} <\varepsilon$. Because of the continuity of trigonometric polynomials by $t$ and by all coefficients we have the following inequalities valid
 $$
 S(\theta ;\, \theta )<S^{0} (t_{1} ),
 $$
 $$ 
 S(\theta ;\, t_{j} )<S^{0} (t_{j} ),\, j=2,\, \ldots \, ,\, m\,,
 $$
  $$
 \left|S(\theta ;\, t_{j} )-S^{0} (t_{j} )\right|<\delta ,\, j=m+1,\, \, \ldots \, \, ,\, q,
 $$ 
$$
\left|S(\theta ;\, \frac{\pi }{2} )-S^{0} (\frac{\pi }{2} )\right|<\delta 
$$ 
for any  $\delta $ with a proper choice of $\varepsilon$. These inequalities mean that the quantity 
$$
\max \left\{S(\theta ;\, \theta ),\, \, S(\theta ;\, t_{2} ),\, \, \, \ldots \, ,\, S(\theta ;\, t_{q} )\, ,\, \, S(\theta ;\, \frac{\pi }{2} )\right\}
$$
is less then 
$$
\max \left\{\, \, S^{0} (t_{1} ),...,S^{0} (t_{q} ),S^{0} (\frac{\pi }{2} )\, \right\}
$$ 
at least for sufficiently small positive  $\theta -t_{1} $, i.e. the pair $\left\{C^{0} (t),\, \, S^{0} (\, t)\right\}$ is not an optimal.\\

{\it Case 2.} Let evaluate
\[S^{0} (\frac{\pi }{2} )=\frac{1}{2} \left(a_{1} +\frac{\alpha _{1} }{2} +(1+\cos 2t_{1} )\sum _{j=1}^{n-1}(-1)^{j} \alpha _{j} \, \,  \right),\] 
\[
S(\theta ;\, \frac{\pi }{2} )=\frac{a_{1} +\frac{\alpha _{1} }{2} +(1+\cos 2\theta )
\sum _{j=1}^{n-1}(-1)^{j} \alpha _{j}  }{2(1-\cos 2\theta )\sum _{j=1}^{n-1}j\alpha _{j}},
\] 
and  $S(t_{1} ;\, \frac{\pi }{2} )=S^{0} (\frac{\pi }{2} )$. Since by the assimption $S^{0} (\frac{\pi }{2} )\ge S^{0} (t_{1} )=\frac{1}{2\sin t_{1} } \left(a_{1} +\frac{\alpha _{1} }{2} \right)\ge 0$, 
then  $\sum _{j=1}^{n-1}(-1)^{j} \cdot \alpha _{j}  \ge 0$. Then the  functions $S(\theta ;\, \frac{\pi }{2} )$, $S(\theta ;\, \theta )$, $S(\theta ;\, t_{k} )$, $k=2,...,m$, are decreasing by the parameter $\theta.$ Therefore, in this case the pair  $\left\{C^{0} (t),\, S^{0} (\, t)\right\}$ cannot be an optimal.\\

{\it Case 3.} If $S^{0} (\, t_{1} )<0$, then $a_{1} +\frac{\alpha _{1} }{2} <0$. But $S^{0} (\frac{\pi }{2} )>0$, therefore $\sum _{j=m}^{n-1}(-1)^{j} \cdot \alpha _{j}  \ge 0$. Thus, in this case the function $S(\theta ;\, \frac{\pi }{2} )$ is decreasing by $\theta $ too, and the pair $\left\{C^{0} (t),\, S^{0} (\, t)\right\}$ cannot be an optimal. The lemma is proved and we can proceed with the proof of Theorem 3.
\end{proof}

So, $\mathcal T^{0} =\emptyset $ and therefore 
$$
\bar \rho_1=\min_{a_1+...+a_n=1}\max_{t\in[0,\pi/2]}\left\{|S(t)|: C(t)>0, t\in\left(0,\frac\pi2\right), C(t)=0\right\}=
$$
$$
\min_{a_1+...+a_n=1}\left\{\left|S\left(\frac\pi2\right)\right| : C(t)>0, t\in\left(0,\frac\pi2\right)\right\}
$$

The polynomial $C(t)$ can be written as 
\[C(t)=\cos t\cdot (\gamma _{1} +2\gamma _{2} \cos 2t+..+2\gamma _{n} \cos 2(n-1)t),\]
where the coefficients $a_{1} ,\, \, \ldots \, \, ,\, \, a_{n} $ and $\gamma _{1} ,\, \, \ldots \, ,\, \, \gamma _{n} $ are connected by  a bijective relations: 
$$
\gamma _{s} =\sum _{j=s}^{n}(-1)^{s+j} a_{j}  \; ,s=1,\, \, \ldots \, ,\, \, n.
$$ 
It is clear that  $\gamma _{1} +2\sum _{j=2}^{n}\gamma _{j}  =\sum _{j=1}^{n}a_{j} =1 $ and  
\[S(\frac{\pi }{2} )=\sum _{j=1}^{n}(-1)^{j+1} \gamma _{j}  =\gamma _{1} .\] 
Let
$$
\rho _{1} =\mathop{\min }\limits_{a_{1} +\ldots +a_{n}=1} 
\left\{\left|S\left(\frac\pi2\right)\right| : C(t)\ge 0,\, \, t\in (0,\, \, \frac{\pi }{2} )\, \right\}.
$$
Then
\[\rho _{1} =\mathop{\min }\limits_{\gamma _{1} ,\ldots ,\gamma _{n} } \left\{\, |\gamma _{1} |: \gamma _{1} +2\sum _{j=1}^{n}\gamma _{j}  =1,\, \, C(t)\ge 0,\, \, t\in (0,\, \, \frac{\pi }{2} )\, \right\}.\] 

From the properties of Fej\'er kernels ([2], 6.7, problem 50), it follows that the value of a non-negative trigonometric polynomial of degree $n-1$ with the zero term equals 1 does not exceed $n$. 
Moreover, if it is an even polynomial then the extremal values are achieved only at the points  $2k\pi ,\, k\in\mathbb Z$, and polynomial coefficients are determined uniquely. Fej\'er condition implies that the inequality for the extremal polynomial $\dfrac{C^{0} (t)}{\gamma _{1} } \le n$. Therefore, $\gamma _{1}^{0} =\frac{C^{0} (0)}{n} =\frac{1}{n} $ and $\rho _{1} =\frac{1}{n} $.

Let us find $\bar\rho_1$.  To do that, consider  one-parameter family of trigonometric polynomials 
\[C^{\varepsilon } (t)=\sum _{j=1}^{n}a_{j}^{\varepsilon } \cos (2j-1)t, S^{\varepsilon } (t)=\sum _{j=1}^{n}a_{j}^{\varepsilon } \sin (2j-1)t,\] 
where
\[a_{1}^{\varepsilon } =\frac{a_{1}^{0} +\varepsilon }{1+\varepsilon } ,\, \, a_{j}^{\varepsilon } =\frac{a_{j}^{0} }{1+\varepsilon } ,\, j=2,\, \, \ldots \, ,\, \, n.\] 
It is clear that 
\[\sum _{j=1}^{n}a_{j}^{\varepsilon } = \frac{a_{1}^{0} +\varepsilon }{1+\varepsilon } +\frac{a_{2}^{0} }{1+\varepsilon } +\, \ldots \, +\frac{a_{n}^{0} }{1+\varepsilon } =1,\] 
and
\[
C^{\varepsilon } (t)=\frac{C^{0} (t)}{1+\varepsilon } +\frac{\varepsilon }{1+\varepsilon } \cos t,\quad
S^{\varepsilon } (t)=\frac{S^{0} (t)}{1+\varepsilon } +\frac{\varepsilon }{1+\varepsilon } \sin t. 
\] 
Note that $C^{\varepsilon } (t)>0$ for all  $t\in (0,\frac{\pi }{2} )$ and $\varepsilon >0.$ Therefore,
\[\bar\rho_1 \le S^{\varepsilon } (\frac{\pi }{2} )=\frac{S^{0} (\frac{\pi }{2} )}{1+\varepsilon } +\frac{\varepsilon }{1+\varepsilon } .\]

If $\varepsilon \to 0+0$ in the limit then $\bar\rho_1 \le S^{0} (\frac{\pi }{2} )=\rho _{1} $. Since $\bar\rho_1 \ge \rho _{1} $, then $\bar\rho_1 =\rho _{1} =\frac{1}{n} $.

Let us find the coefficients  $a_{_{1} }^{0} ,\, \ldots \, ,\, a_{_{n} }^{0} $. Using Fej\'er kernel we obtain
$$
C^{0} (t)=\left(\frac{\sin nt}{n\sin t} \right)^{2} \cos t=\cos t\, \left(\frac{1}{n} +2\sum _{j=2}^{n}\frac{n-j+1}{n^{2} } \cos 2(j-1)t \right).
$$ 
Since $a_{j}^{0} =\gamma _{j}^{0} +\gamma _{j+1}^{0} $, $\; j=1,\, \, \ldots, n,$ assuming $\gamma _{n+1}^{0} =0$, then
\begin{equation} \label{17}
a_{j}^{0} =\frac{2(n-j)+1}{n^{2} } ,\; j=1,\, \ldots \, ,n. 
\end{equation}
Let compute
\[\mathop{\max }\limits_{t} \left\{\, \left|S^{\varepsilon } (t)\right|:C^{\varepsilon } (t)=0\, \right\}=\frac{1}{1+\varepsilon } \left(\frac{1}{n} +\varepsilon \right).\] 
From here $J_{2} =\dfrac{1}{n} $.\\

The proof of the theorem is completed.
\\

The formula \eqref{9} implies \[\mu _{n} (2)=n^{2} .\] 

\textbf{\textit{Corollary}} Let $C(t)$ and $S(t)$ be a pair of trigonometric polynomials  \eqref{16},
normalized by the condition  $\sum _{j=1}^{n}a_{j} =1 $.  Let  $T$  be a set of sign changes of the function  $C(t)$. Then 
$$
\mathop{\min }\limits_{(a_{1} ,\, \ldots \, ,\, a_{n} )} \mathop{\max }\limits_{t} \left\{\, \left|S(t)\right|:t\in\mathcal T\right\}=\frac{1}{n}.
$$ 
Moreover, the coefficients of the extremal pair of the trigonometric polynomials are defined in a unique way.

Accordingly to Theorem 3 the optimal polynomial mapping for $T=2$ is 
\[z\cdot \left(p_{0}^{(2)} \left(z\right)\right)^{2} =z\cdot \left(\sum _{j=1}^{n}\frac{2(n-j)+1}{n^{2} } \cdot z^{j-1}  \right)^{2} ,\] 
and the coefficients of the polynomial $p_{0}^{2} \left(z\right)$ are connected with the Fej\'er kernel $\Phi _{n}^{(2)} \left(t\right)$ by the relation
\[\Re\left\{e^{it} \cdot p_{0}^{(2)} \left(e^{2it} \right)\right\}=\frac{1}{n^{2} } \cdot \cos t\cdot \Phi _{n-1}^{(2)} \left(2t\right).\] 

\begin{figure}[ht]
\begin{minipage}[b]{0.45\linewidth}
\hspace{2cm}\includegraphics[scale=0.15]{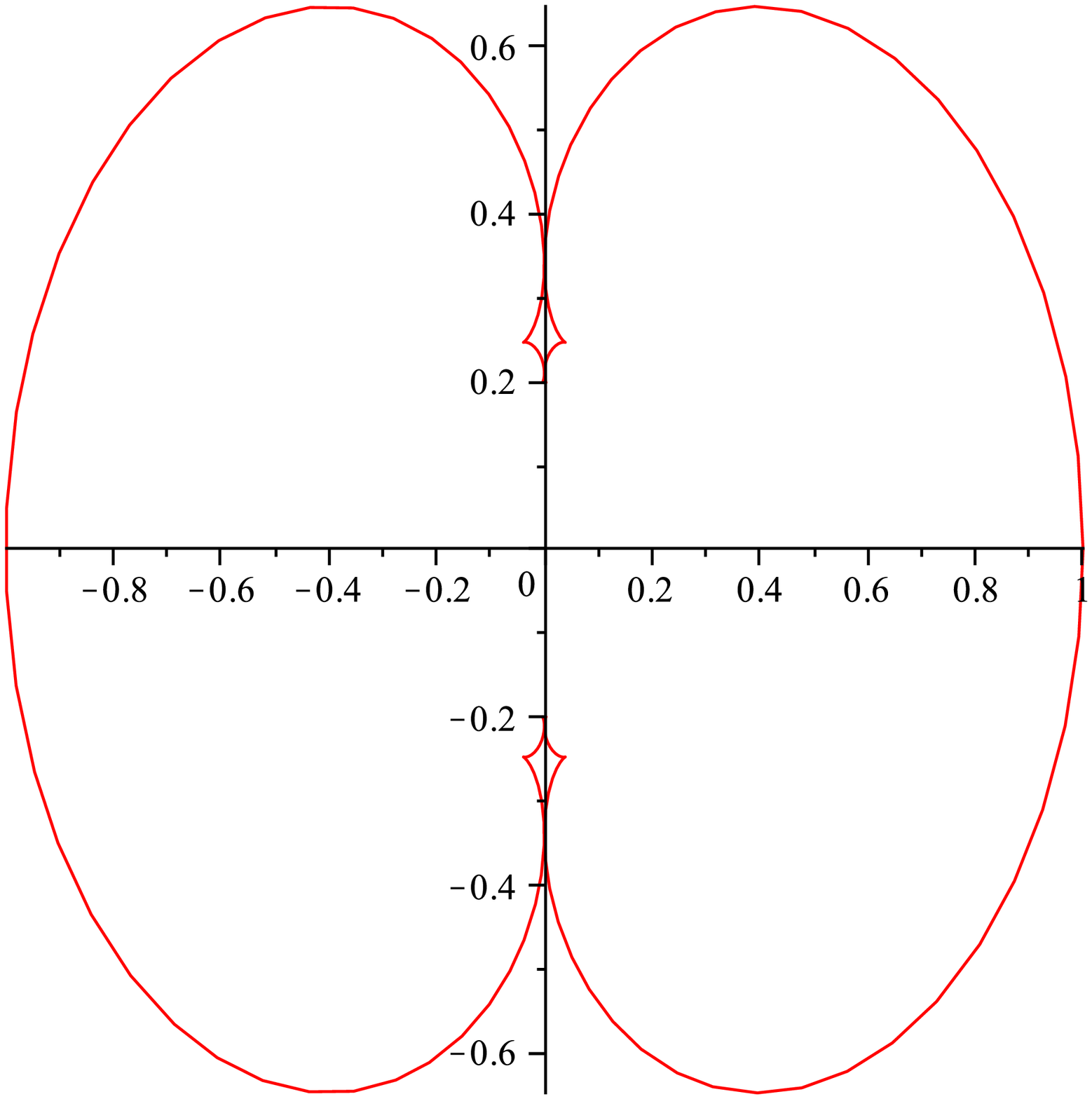}
\caption{The graph of the curve  $x(t)=\Re\left(\sum _{j=1}^{n}a_{j}^{0} e^{-i(2j-1)t}  \right),$ $\, y(t)=\Im\left(\sum _{j=1}^{n}a_{j}^{0} e^{-i(2j-1)t}  \right)$ for $n=5,$ $t\in \left[0,\, 2\pi \right]$.}
\label{Fig3}
\end{minipage}
\;
\begin{minipage}[b]{0.45\linewidth}
\hspace{2cm}\includegraphics[scale=0.15]{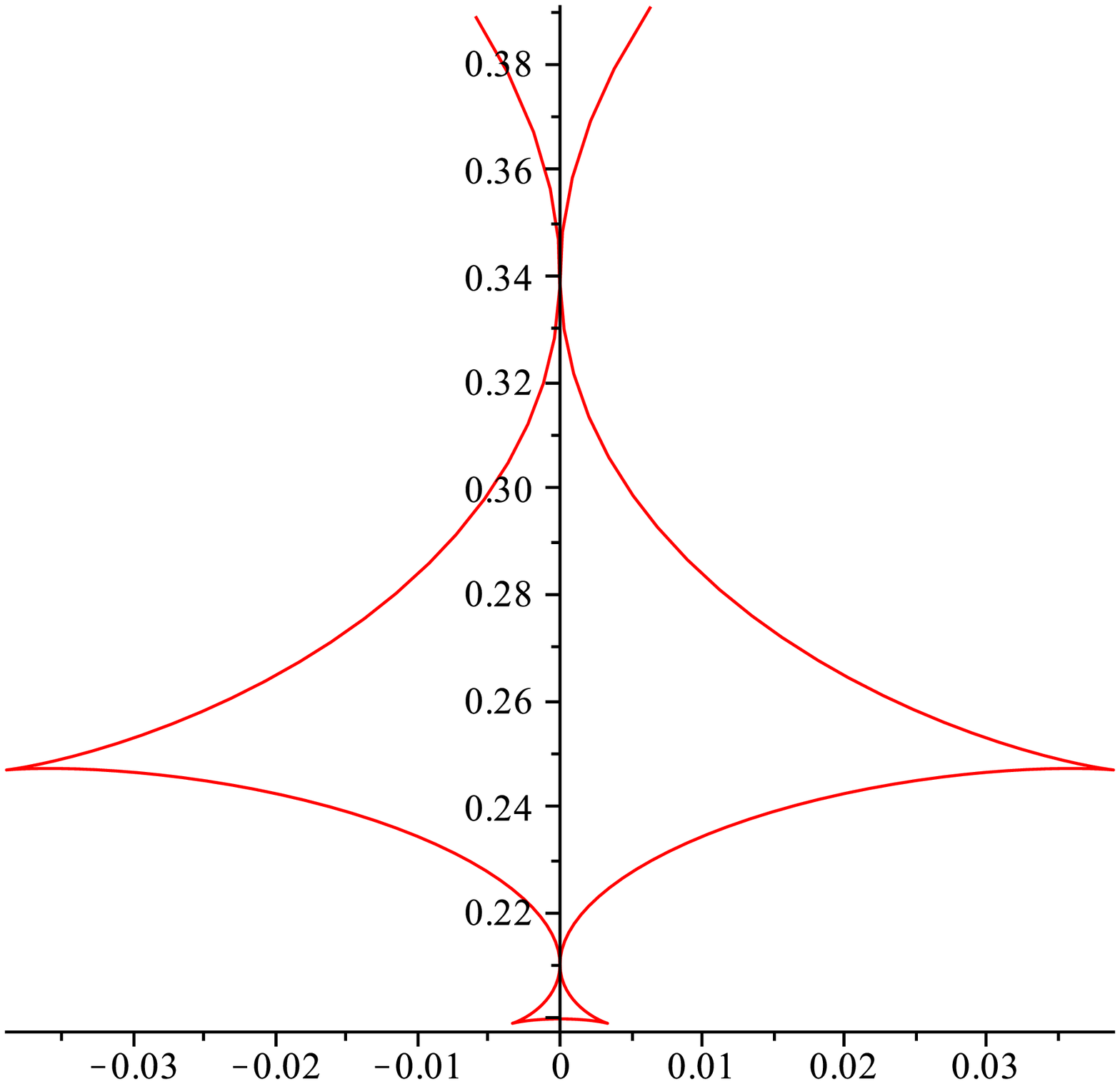}
\caption{The graph of the curve  $x(t)=\Re\left(\sum _{j=1}^{n}a_{j}^{0} e^{-i(2j-1)t}  \right),$ $ y(t)=\Im\left(\sum _{j=1}^{n}a_{j}^{0} e^{-i(2j-1)t}  \right)$ for $n=5,$ $t\in \left[0.58,\, 2.56\right]$. }
\label{Fig4}
\end{minipage}
\end{figure}



\begin{figure}[ht]
\begin{minipage}[b]{0.45\linewidth}
\hspace{2cm}\includegraphics[scale=0.15]{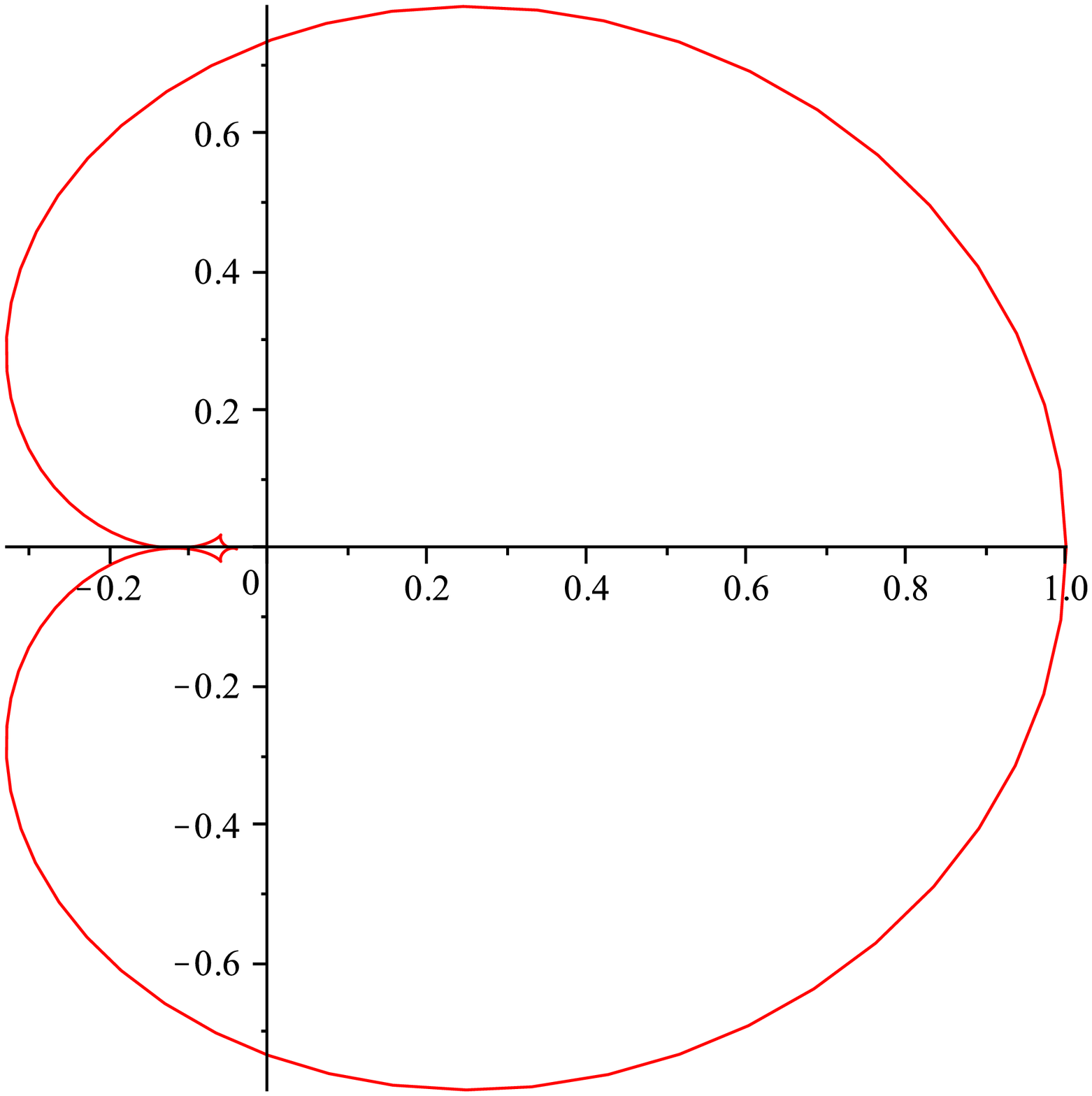}
\caption{ The graph of the curve  $x(t)=\Re\left(e^{-it} \left(\sum _{j=1}^{n}a_{j}^{0} e^{-i(j-1)t}  \right)^{2} \right),$
$y(t)=\Im\left(e^{-it} \left(\sum _{j=1}^{n}a_{j}^{0} e^{-i(j-1)t}  \right)^{2} \right)$ for $n=5,$ $t\in \left[0,\, 2\pi \right]$.}
\label{Fig5}
\end{minipage}
\;
\begin{minipage}[b]{0.45\linewidth}
\hspace{2cm}\includegraphics[scale=0.15]{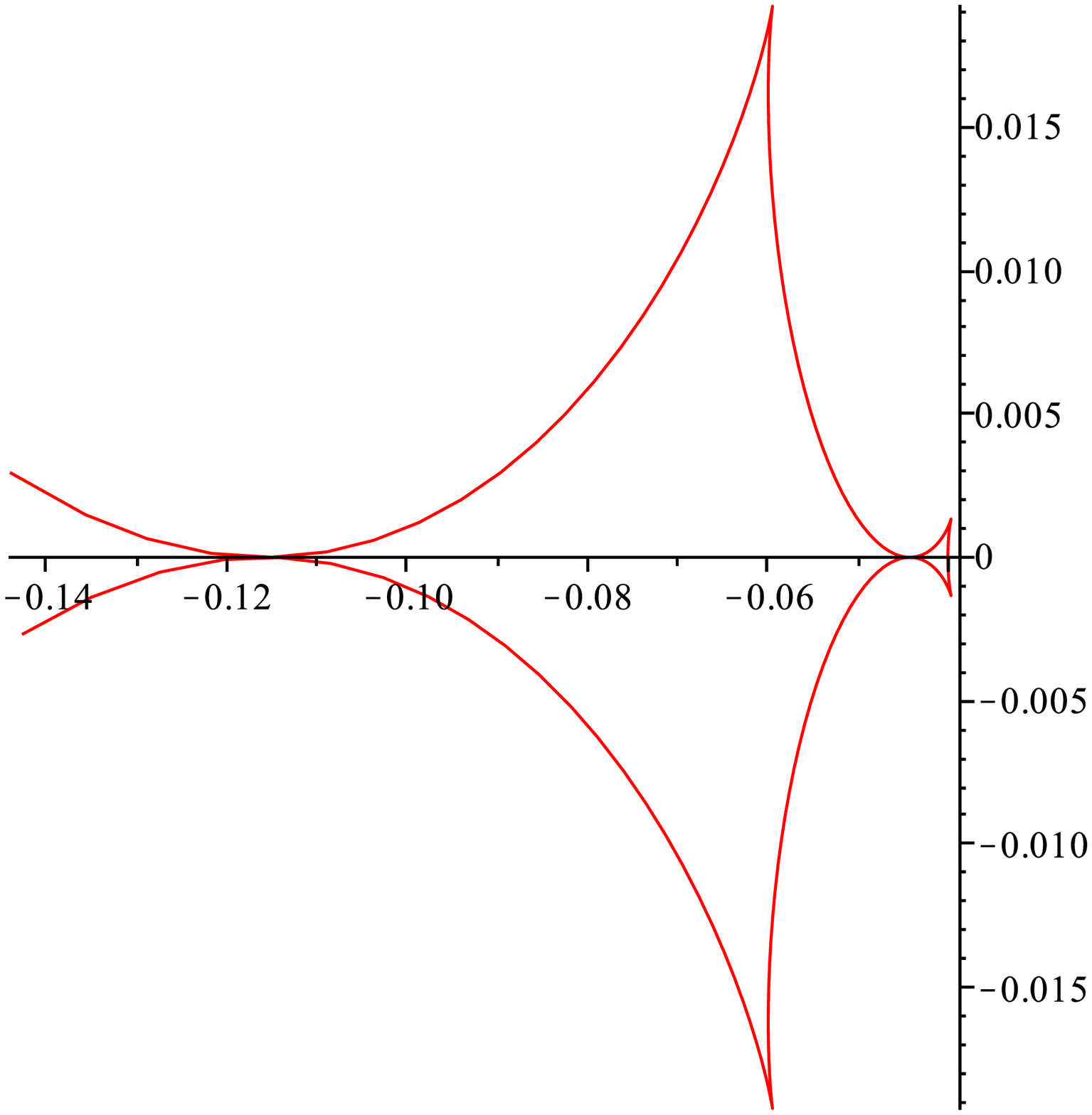}
\caption{The graph of the curve  $x(t)=\Re\left(e^{-it} \left(\sum _{j=1}^{n}a_{j}^{0} e^{-i(j-1)t}  \right)^{2} \right),$
$y(t)=\Im\left(e^{-it} \left(\sum _{j=1}^{n}a_{j}^{0} e^{-i(j-1)t}  \right)^{2} \right)$ for $n=5,$ $t\in \left[1.18,\, \, 5.1\right]$. }
\label{Fig6}
\end{minipage}
\end{figure}



\begin{figure}[ht]
\hspace{2cm}\includegraphics[scale=0.15]{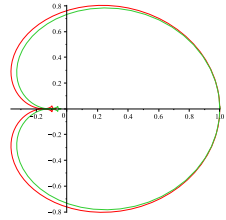}
\caption{ The graph from the Fig. \ref{Fig5}  (the outer) and from the Fig. \ref{Fig1}  (the inner).  It is 
quite remarkable that not only the values $\mu_n(1)$ and $\mu_n(2)$ are very close but
also  the whole optimal images.}
\label{Fig7}
\end{figure}


\begin{theorem}\label{trm4}
The mapping $zp_0^{(2)}(z^2)$ is univalent in  $\mathbb D.$
\end{theorem}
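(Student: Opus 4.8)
The plan is to recognize $g(z):=z\,p_0^{(2)}(z^2)$ as a Suffridge polynomial up to a unimodular rotation and a scalar factor, and then to invoke the univalence of the polynomials \eqref{15} (established in \cite{Su} and already used in the $T=1$ case). First I would write the map out explicitly from \eqref{17}, namely
$$
g(z)=z\,p_0^{(2)}(z^2)=\sum_{j=1}^n\frac{2(n-j)+1}{n^2}\,z^{2j-1},
$$
an odd polynomial of degree $2n-1$ with all coefficients positive.

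Next I would search among the Suffridge polynomials \eqref{15} of degree $2n-1$ for one that is odd. The natural candidate is $s_{n,2n-1}$: its $j$-th coefficient carries the factor $\sin\frac{n\pi j}{2n}=\sin\frac{\pi j}{2}$, which vanishes for every even $j$, so $s_{n,2n-1}$ contains only odd powers of $z$. Evaluating the surviving coefficients gives
$$
s_{n,2n-1}(z)=\sum_{l=1}^n\frac{2(n-l)+1}{2n-1}\,(-1)^{l-1}\,z^{2l-1}.
$$

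I would then compare the two. The normalized polynomial $\frac{n^2}{2n-1}g(z)$ has coefficients $\frac{2(n-l)+1}{2n-1}$, which agree in magnitude with those of $s_{n,2n-1}$; the only discrepancy is the sign pattern $(-1)^{l-1}$. But that is exactly the effect of the rotation $z\mapsto -iz$ on a polynomial in odd powers, since $(-i)^{2l-1}=(-1)^l\,i$. A short computation then produces the identity
$$
z\,p_0^{(2)}(z^2)=\frac{(2n-1)\,i}{n^2}\,s_{n,2n-1}(-iz).
$$
As a consistency check, for $n=2$ this reads $\frac34 z+\frac14 z^3=\frac{3i}{4}\,s_{2,3}(-iz)$ with $s_{2,3}(z)=z-\frac13 z^3$, and $g(z)=\frac34 z+\frac14 z^3$ is readily seen to be univalent on its own.

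Finally I would conclude: the map $z\mapsto -iz$ is an automorphism of $\mathbb D$ and multiplication by the nonzero constant $\frac{(2n-1)i}{n^2}$ is injective, so $g$ is univalent on $\mathbb D$ if and only if $s_{n,2n-1}$ is; and $s_{n,2n-1}$ is one of the polynomials \eqref{15}, whose univalence may be cited. The step I expect to be most delicate is the bookkeeping in the middle: confirming that $k=n$ is precisely the index that annihilates the even powers, and that the residual signs are absorbed entirely by a single rotation. A secondary point to watch is that the quoted univalence of \eqref{15} must cover the index $k=n$, not merely $k=1$; should only $k=1$ be available, one can instead use the relation $g(z)^2=G(z^2)$ with $G(z)=z\,(p_0^{(2)}(z))^2$ together with the classical square-root transform, which reduces the claim to the univalence of $G$.
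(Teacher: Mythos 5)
Your proposal is correct and follows essentially the same route as the paper: the paper's proof is precisely the identity $zp_0^{(2)}(z^2)=-i\tfrac{2n-1}{n^2}\,s_{n,2n-1}(iz)$ (equivalently your version with $-iz$, since $s_{n,2n-1}$ is odd) combined with Suffridge's univalence of $s_{k,n}$, and your coefficient bookkeeping, including the observation that $k=n$ kills the even powers, checks out. The only discrepancy is that the paper's displayed constant has $n^2$ and $2n-1$ inverted; your normalization is the correct one.
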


The proof follows from the univalency of the polynomial $s_{n,2n-1}(iz)$ \cite{Su} and the identity 
$$
zp_0^{(2)}(z^2)=-i\frac{n^2}{2n-1}\cdot s_{n,2n-1}(iz).
$$

{\bf Corollary} {\it  The mapping $z\left(p_0^{(2)}(z)\right)^2$ is univalent in $\mathbb D.$}

This follows from the univalency of $zp_0^{(2)}(z^2).$

\section{Optimal stabilization of chaos}

\subsection{ $T=1$.} For the one-parameter logistic mapping 
 \begin{equation}\label{18}
 f:\, \, \left[0,\, \, 1\right]\to \left[0,\, \, 1\right], 
\end{equation}
$f\left(x\right)=h\cdot x\cdot (1-x),\, \, \, 0\le h\le 4$
the equilibrium $x^*=1-\dfrac1h$ of the open-loop system 
$$
x_{k+1}=f(x_k),\; x_k\in\mathbb R,\; k=1,2,...
$$
is unstable for $h\in \left(3,\, \, 4\right].$ The multiplier $\mu \in \left[-2,\, -1\right)$. Then 
$$
\mu_1(1)=\cot^2\frac\pi4=1<\mu^*,\; \mu_2(1)=\cot^2\frac\pi6=3>\mu^*. 
$$
Therefore $n^*=1$ and the minimal depth of the prehistory in the delayed feedback is$N^*=1.$ The optimal strength
coefficient is $\epsilon_1^0=1/3$ and the required stabilizing control is 
$$
u=-\frac{1}{3} \left(f\left(x_{k} \right)-f\left(x_{k-1} \right)\right).
$$


\begin{figure}[ht]
\begin{minipage}[b]{0.45\linewidth}
\hspace{2cm}\includegraphics[scale=0.15]{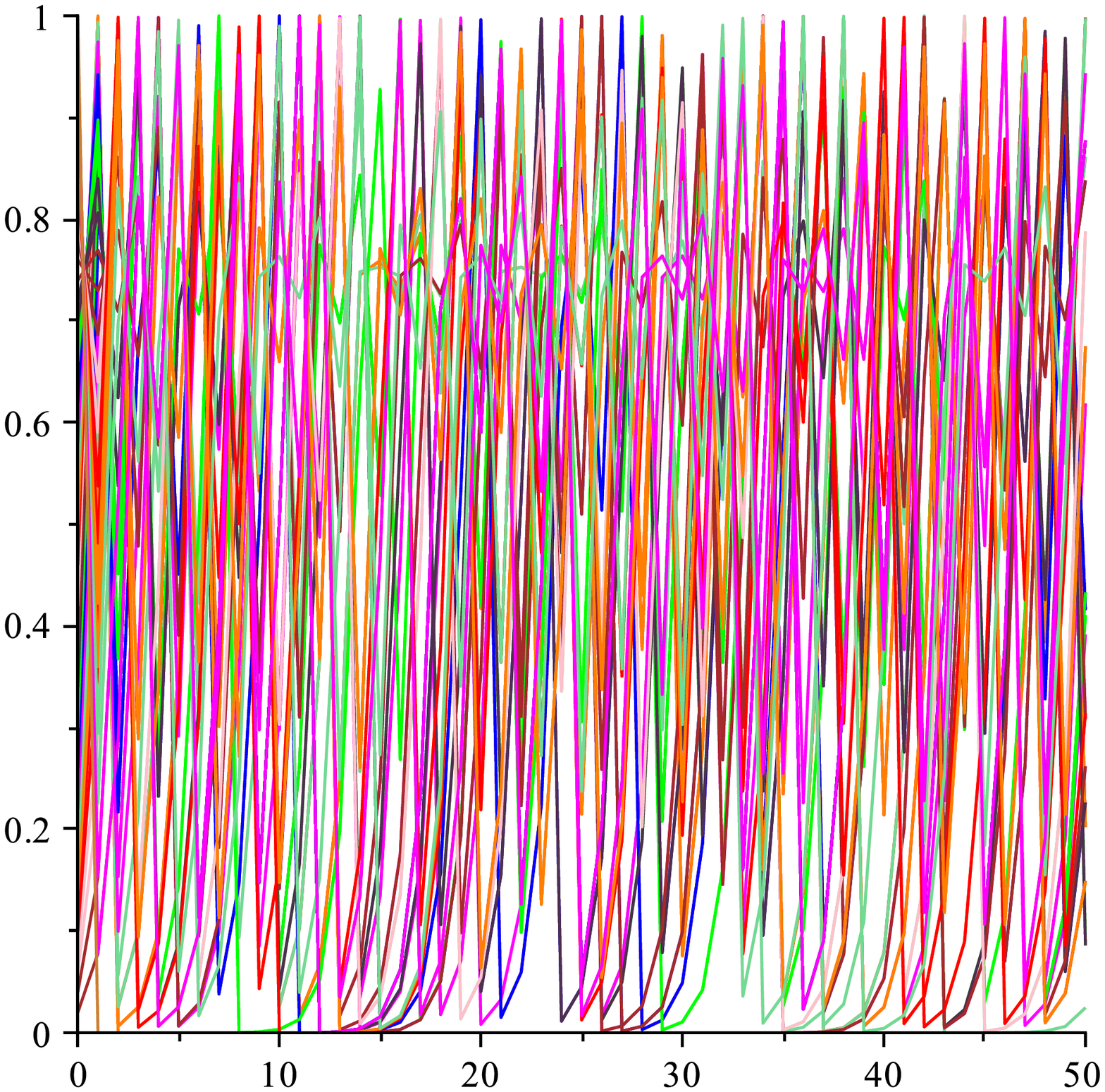}
\caption{Quasi-stochastic dynamics of the solutions to the logistic equations for $h=4.$  }
\label{Fig8}
\end{minipage}
\;
\begin{minipage}[b]{0.45\linewidth}
\hspace{2cm}\includegraphics[scale=0.15]{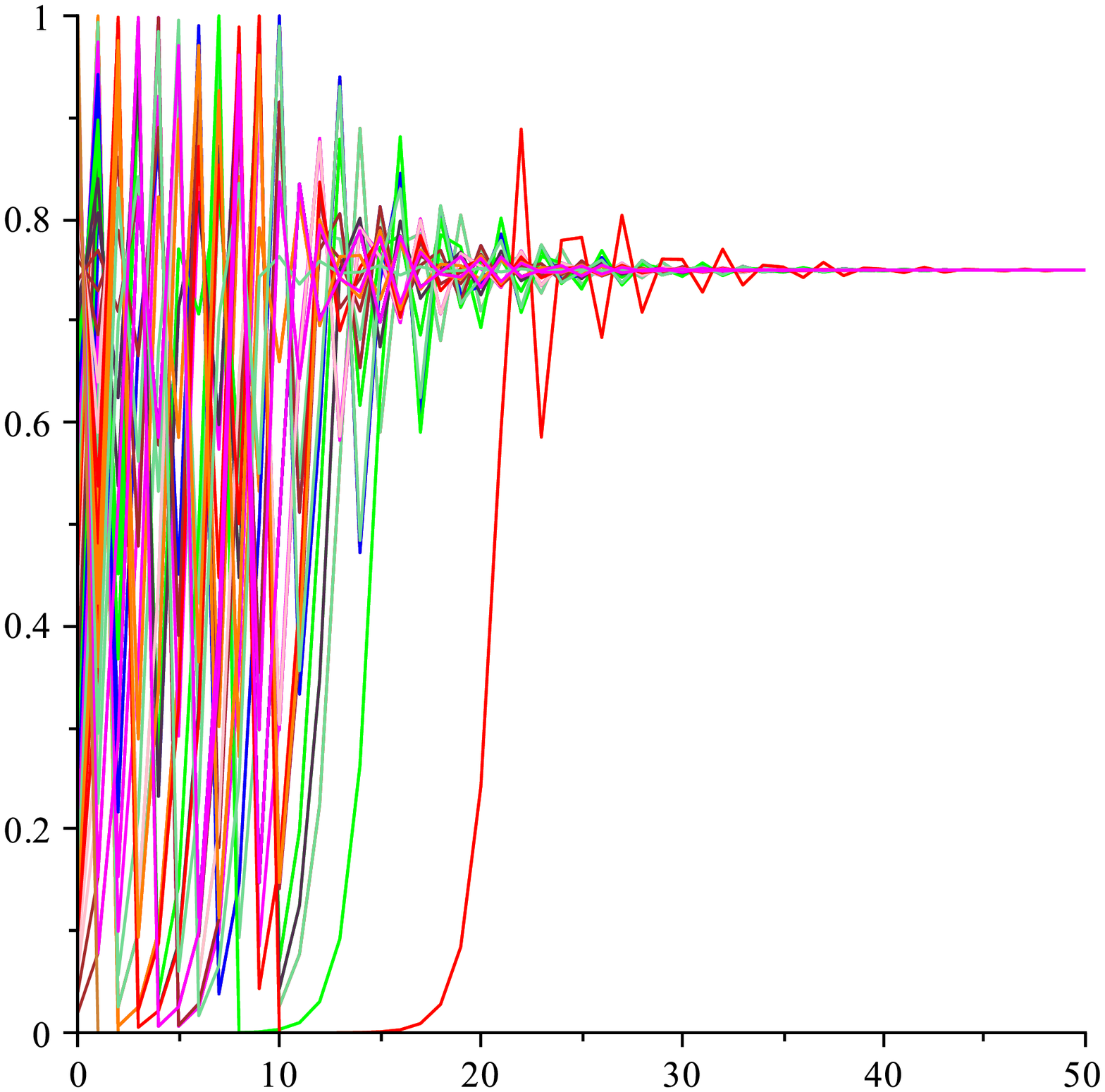}
\caption{ Dynamics of the solutions to the logistic equations for $h=4$  closed by stabilizing control 
$u=-\frac{1}{3} \left(f\left(x_{k} \right)-f\left(x_{k-1} \right)\right).$ }
\label{Fig9}
\end{minipage}
\end{figure}


\begin{figure}[ht]
\hspace{2cm}\includegraphics[scale=0.15]{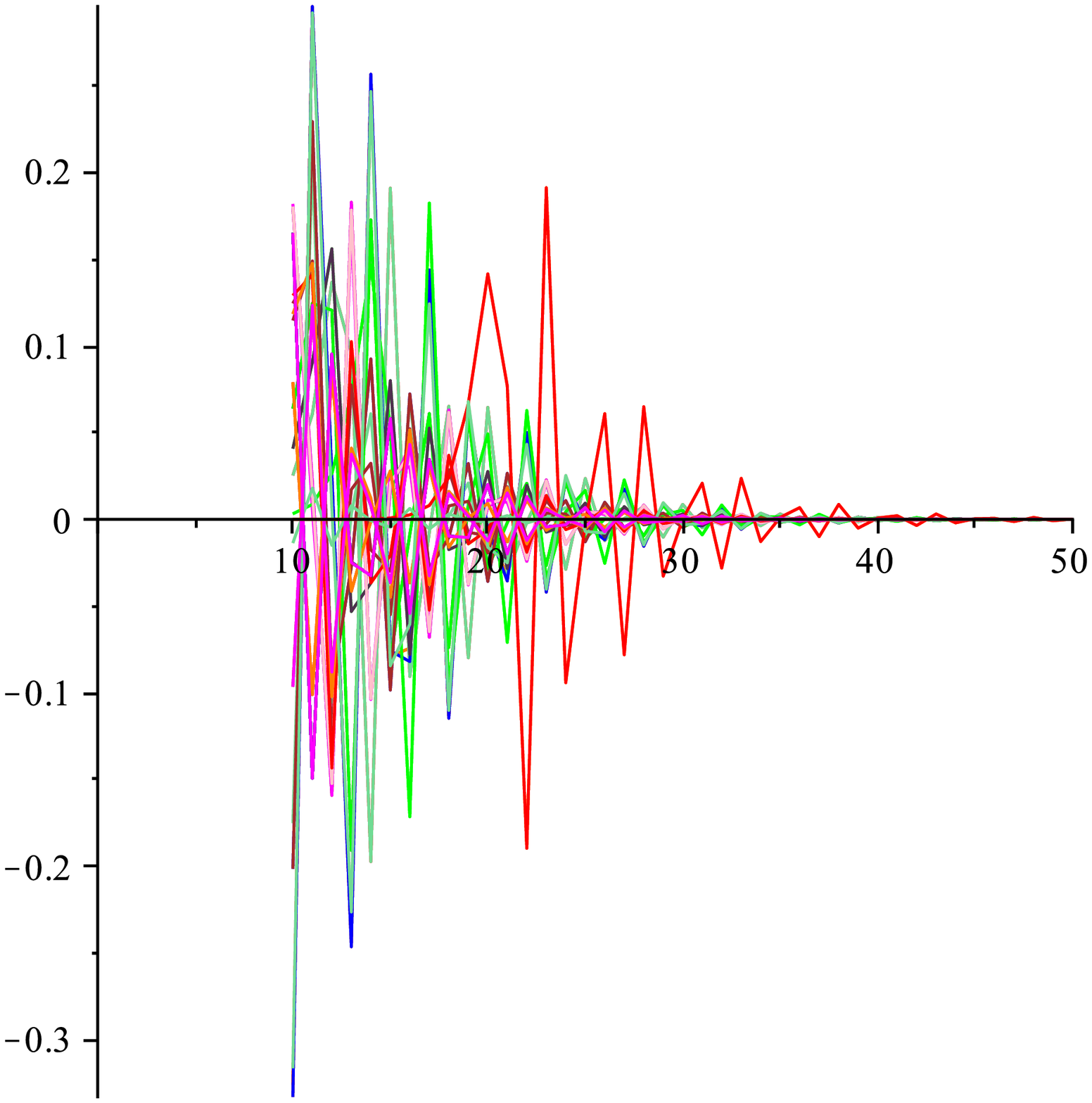}
\caption{The control $u=-\frac{1}{3} \left(f\left(x_{k} \right)-f\left(x_{k-1} \right)\right).$ }
\label{Fig10}
\end{figure}



\subsection{$T=2$} If $h>3,$ the equilibrium of the logistic equation lost stability and 
2-periodic cycle $(\eta_1,\eta_2)$ emerges, where
$$
\eta_1=\frac{1+h-\sqrt{h^2-2h-3}}{2h},\; \eta_2=\frac{1+h+\sqrt{h^2-2h-3}}{2h}.
$$
The cycle multiplier is  $\mu=-h^2+2h+4.$ For $h\in(3,1+\sqrt6)$ the multiplier decreases from 
$1$ to $-1$. For $h=1+\sqrt6$ the 2-cycle lost stability and 4-cycle emerges. If 
$h\in(1+\sqrt6,4]$ then $\mu\in[-4,-1),$ i.e. $\mu^*=4.$ We want to stabilize 2-cycle for all
 $\mu\in[-4,-1).$ Since $\mu_2(2)=4=\mu^*$ and $\mu_3(2)=9>\mu^*$ then 
 $n^*=3, N^*=(n^*-1)T=4.$ The optimal strength coefficients are
 $\epsilon_1=a_2^0+a_3^0=\dfrac49,\; \epsilon_2=a_3^0=\dfrac19.$ The required stabilizing 
 control is
 $$
 u_k=-\frac{4}{9} \left(f\left(x_{k} \right)-f\left(x_{k-2} \right)\right)
 -\frac{1}{9} \left(f\left(x_{k-2} \right)-f\left(x_{k-4} \right)\right).
 $$
 

\begin{figure}[ht]
\begin{minipage}[b]{0.45\linewidth}
\hspace{2cm}\includegraphics[scale=0.15]{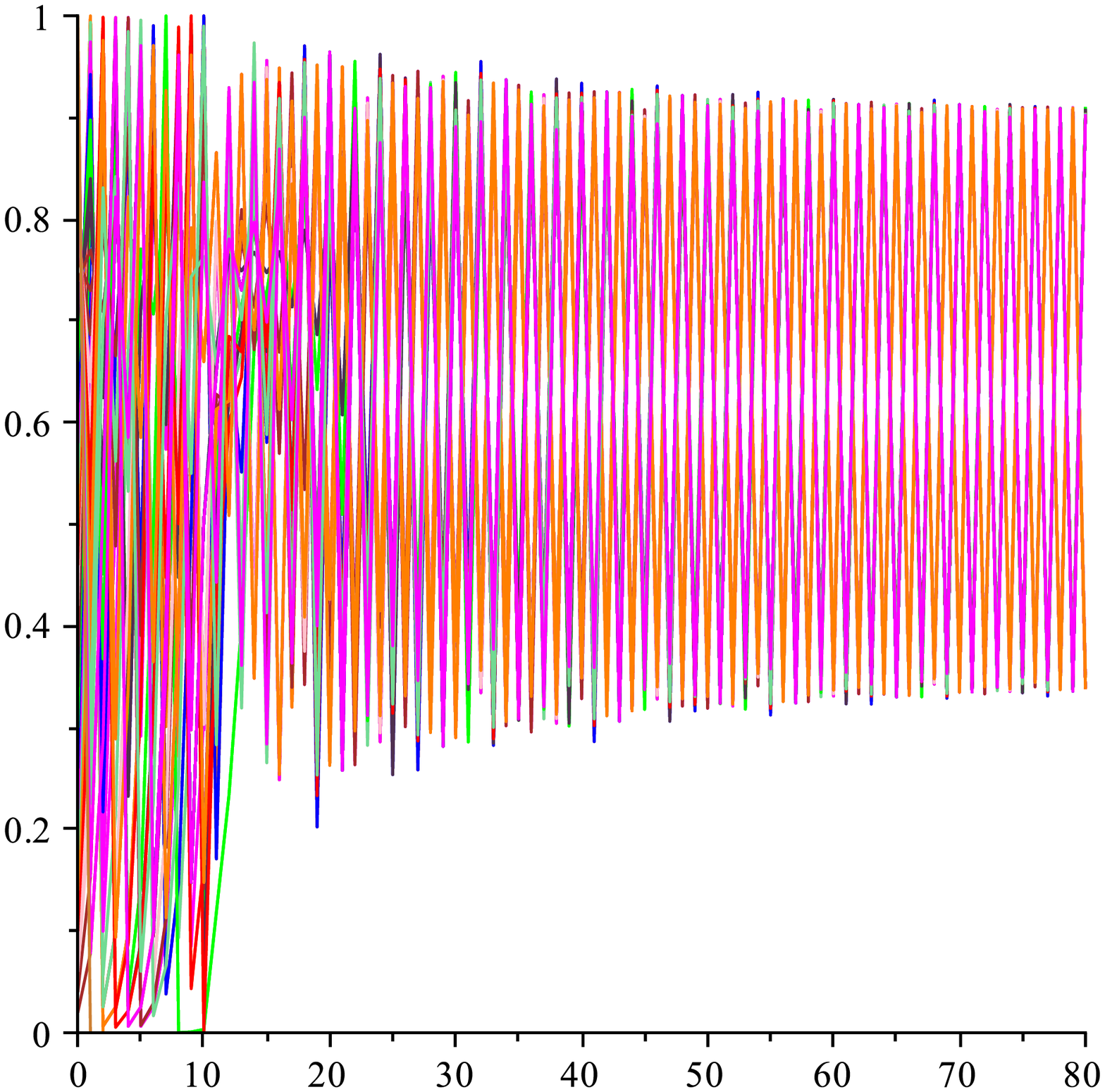}
\caption{ Dynamics of the solutions to the logistic equations for $h=4$  closed by stabilizing control 
$ u_k=-\frac{4}{9} \left(f\left(x_{k} \right)-f\left(x_{k-2} \right)\right)
 -\frac{1}{9} \left(f\left(x_{k-2} \right)-f\left(x_{k-4} \right)\right). $}
\label{Fig11}
\end{minipage}
\;
\begin{minipage}[b]{0.45\linewidth}
\hspace{2cm}\includegraphics[scale=0.15]{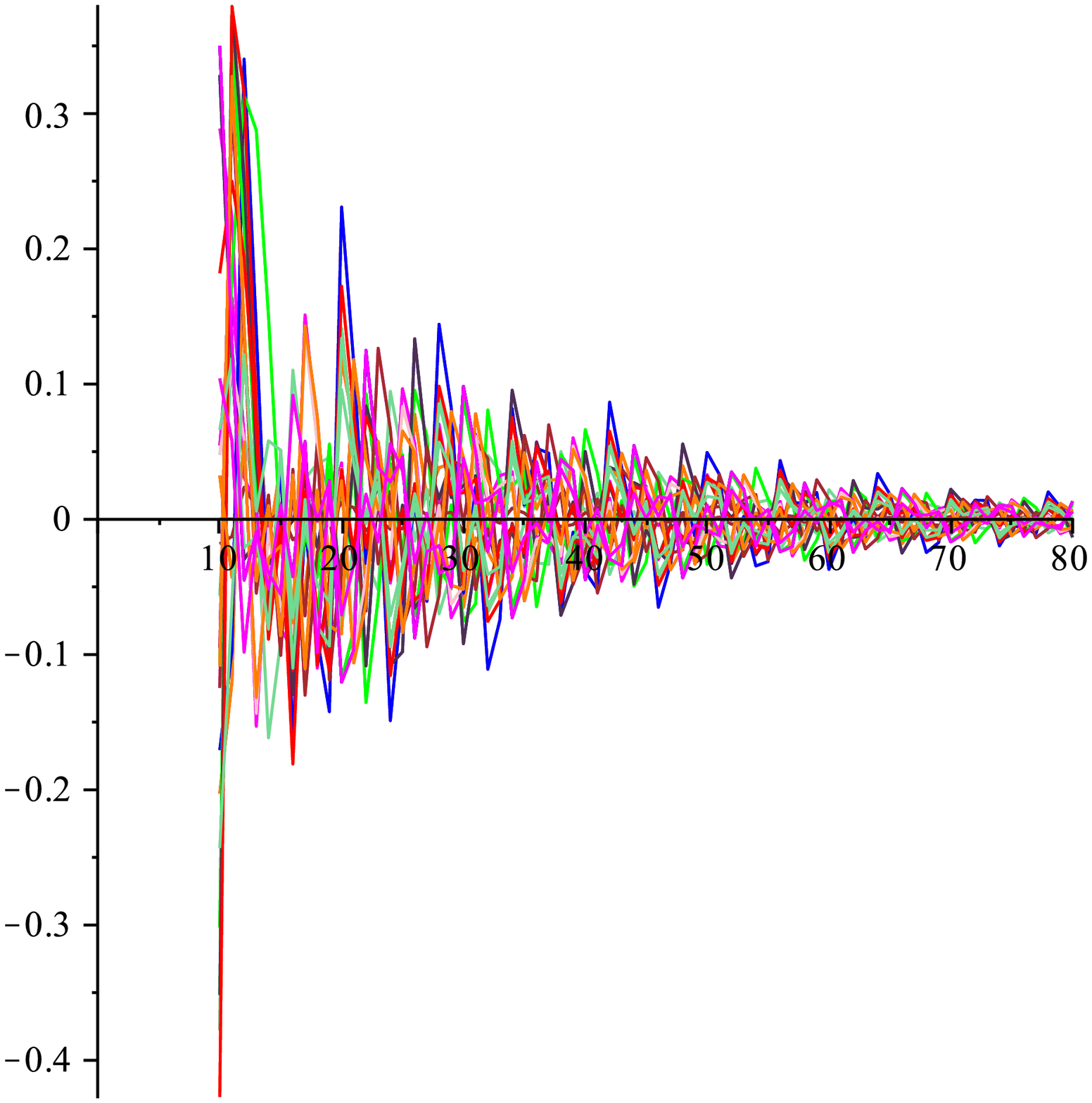}
\caption{ The control $ u_k=-\frac{4}{9} \left(f\left(x_{k} \right)-f\left(x_{k-2} \right)\right)
 -\frac{1}{9} \left(f\left(x_{k-2} \right)-f\left(x_{k-4} \right)\right). $ }
\label{Fig12}
\end{minipage}
\end{figure}


\section{Searching for cycles of arbitrary length}

\subsection{Algorithm} In the previous section to stabilize cycles of lengths 1 and 2 we used direct DFC 
regulators \eqref{2}. Same regulators can be applied to stabilize cycles of the arbitrary length. Indeed, 
each vale of $T$-cycle is a fixed point for $T$-folded $f$ mapping:
$$
x_{k+1}=f^{(T)}(x_k),\; f^{(T)}(x_k)=f(f^{(T-1)}(x_k)),\; f^{(0)}(x_k)=f(x_k).
$$ 
The question of cycle stability is reducing to the question of stability for the fixed points of the mapping
$f^{(T)},$ that containes cycles of the length $T.$ The value of the cycle multiplier is independent of the 
choice of the fixed point belonging to the considering cycle.

Indeed, 
$$
\mu=\frac d{d\eta}f^{(T)}(\eta_j)=\frac d{d\eta}f(f^{(T-1)}(\eta_j))=
\frac{df}{d\eta}(\eta_{j-1})\frac d{d\eta}f^{(T-1)}(\eta_j)=...
$$
$$
\frac{df}{d\eta}(\eta_{j-1})\cdot \frac{df}{d\eta}(\eta_{j-2})\cdot\ldots\cdot \frac{df}{d\eta}(\eta_{j-T})=
\frac{df}{d\eta}(\eta_{ 1})\cdot \frac{df}{d\eta}(\eta_{2})\cdot\ldots\cdot \frac{df}{d\eta}(\eta_{T})
$$
Note that fixed points of the mapping $f^{(T_1)}$ are also the fixed points of the mapping $f^{(T)},$
if $T=m T_1$ ($m$ - is an integer number).

Thus, the problem of stabilizing $T$-cycles \eqref{1}, \eqref{2} can be reduced to the problem of the stabilizing
the equilibrium for the system
\begin{equation}\label{19}
x_{k+1}=f^{(T)}(x_{k}),\; x_k\in\mathbb R, k=1,2,...
\end{equation}
by the control
\begin{equation}\label{20}
u_k=-\sum_{j=1}^{n-1}\varepsilon_j\left(f^{(T)}(x_{k-2j+2})-f^{(T)}(x_{k-2j}) \right),\; 0<\varepsilon<1,\; j=1,\dots,n-1.
\end{equation}
The solution to the problems \eqref{19}, \eqref{20} is given by Theorem \ref{trm2}:  
$\varepsilon_j=1-\sum_{i=1}^j a_i^0,$ where $a_i^0$ $(i=1,\dots,n-1)$ are defined by \eqref{13}.\\

If $T$ is even, then the problem of stabilization of $T$-cycle \eqref{1}, \eqref{2} can be reduced to to the problem
of stabilizing for the 2-cycle of the system
\begin{equation}\label{21}
x_{k+1}=f^{(T/2)}(x_{k}),\; x_k\in\mathbb R, k=1,2,...
\end{equation}
by the control
\begin{equation}\label{22}
u_k=-\sum_{j=1}^{n-1}\varepsilon_j\left(f^{(T/2)}(x_{k-j+1})-f^{(T/2)}(x_{k-j}) \right),\; 0<\varepsilon<1,\; j=1,\dots,n-1.
\end{equation}
The solution to the problems \eqref{21}, \eqref{22} is given by the Theorem \ref{trm3}:  
$\varepsilon_j=1-\sum_{i=1}^j a_i^0,$ where $a_i^0$ are defined by \eqref{17}.\\

In general case, when $T=m T_1$ ($m$ - is an integer number) the problem of the $T$-cycle stabilization is reducing to the problem of the stabilizing of the cycle of the length $T_1$ of the auxiliary system
\begin{equation}\label{23}
x_{k+1}=f^{(m)}(x_{k}),\; x_k\in\mathbb R, k=1,2,...
\end{equation}
by the control
\begin{equation}\label{24}
u_k=-\sum_{j=1}^{n-1}\varepsilon_j\left(f^{(m)}(x_{k-T_1j+1})-f^{(m)}(x_{k-T_1j}) \right),\; 0<\varepsilon_j<1,\; j=1,\dots,n-1.
\end{equation}

Let us note the important difference between the suggested  method of stabilization in this article and majority of currently known ones (in particular, from
famous OGY method \cite{OGY}). Namely, the control is applying at any instance of time, not necessary in a neighborhood of the desired cycle. It is 
{\it not necessary} to know the cycle a priori. Moreover, one control allows to stabilize at once {all} cycles of the given length with negative multipliers (with a proper choice of the number $n$ of strength coefficients). To stabilize the specific cycle it is sufficient for the sequence of the initial points to fall
into basin of attraction that corresponds to the fixed point of the mapping $f^{(T)}$.    

The problems \eqref{1}, \eqref{2} and \eqref{19}, \eqref{20} seem  to be equivalent
at the first glance. But this is not the case. The basins of attractions
of the investigated  equilibriums or cycles in this problems, generally speaking, are different. This is why we need to have a collection of controls that stabilize cycles of the lengths $1,2,...,T$ for all $n.$ With the help of this controls it can be possible to stabilize the cycles of the arbitrary length. By varying parameters $m$ and
$T_1\in\{1,2,...,\hat T\}$ it might be possible to increase the basin of attraction for the investigating cycles.

One of the possible applications of the suggested methods could be verification of the existence of the periodic orbits of a given nonlinear mapping. If such orbits are non-stable and their multipliers are negative, then such orbits can be detected by stabilization. It is possible to find all unstable  periodic orbits with negative multipliers with sufficiently large basins of attraction. 

Let the system contains $s$ cycles with unknown multipliers $\{\mu_1,\dots,\mu_s\}\subset [-\mu,-1),$ and we don't know the value of $s.$ However, it is not difficult to get an estimate for the value $\mu^*$ from the properties of the derivative of the initial mapping or from the $T$ iteration of this mapping. Then the number $n$ in the stabilizing control, which depends on $\mu^*$ can be computed from the condition $m_n(T)>\mu^*,\; m_{n-1}(T)\le \mu^*.$ (The functions $\mu_n(1)$ and $\mu_n(2)$ are obtained in the theorems 2 and 3. ) On practice, there is no need to find the value $n$ - it might be consecutively increasing until new stable cycles stop to appear.

\subsection{Examples} It is known that the logistic mapping \eqref{18} for $h=4$ has two 3-cycles: $\{\eta_1\approx 0.188, \eta_2\approx 0.611, \eta_3\approx 0.95\}$ and $\{\eta_1\approx 0.117, \eta_2\approx 0.413, \eta_3\approx 0.97\}.$ The first cycle has a positive multiplier and the second one has a negative one. Let us use the suggested methodology to find the second 3-cycle. To do that, consider the system
\begin{equation}\label{25}
x_{k+1}=f^{(3)}(x_k),\; x_k\in\mathbb R,\; k=1,2,...
\end{equation}
closed by the control
\begin{equation}\label{26}
u_k=-\sum_{j=1}^{n-1} \varepsilon_j\left(f^{(3)}(x_{k-j+1}) - f^{(3)}(x_{k-j})\right),\; 0<\varepsilon_j<1,\; j=1,...,n-1.
\end{equation}
For $n=4$ the system \eqref{25}, \eqref{26} demonstrates three stable equilibriums $x_1^*\approx 0.117, x_2^*\approx 0.413, x_3^*\approx 0.75$ (see Fig. \ref{Fig13})
The first and the second one correspond  to the non-stable 3-cycle of the logistic mapping with negative multiplier, and the third one corresponds to the unstable equilibrium. The stabilizing control vanish with increasing of the number of iterations $k$ (see Fig. \ref{Fig14}).





\begin{figure}[ht]
\begin{minipage}[b]{0.45\linewidth}
\hspace{2cm}\includegraphics[scale=0.15]{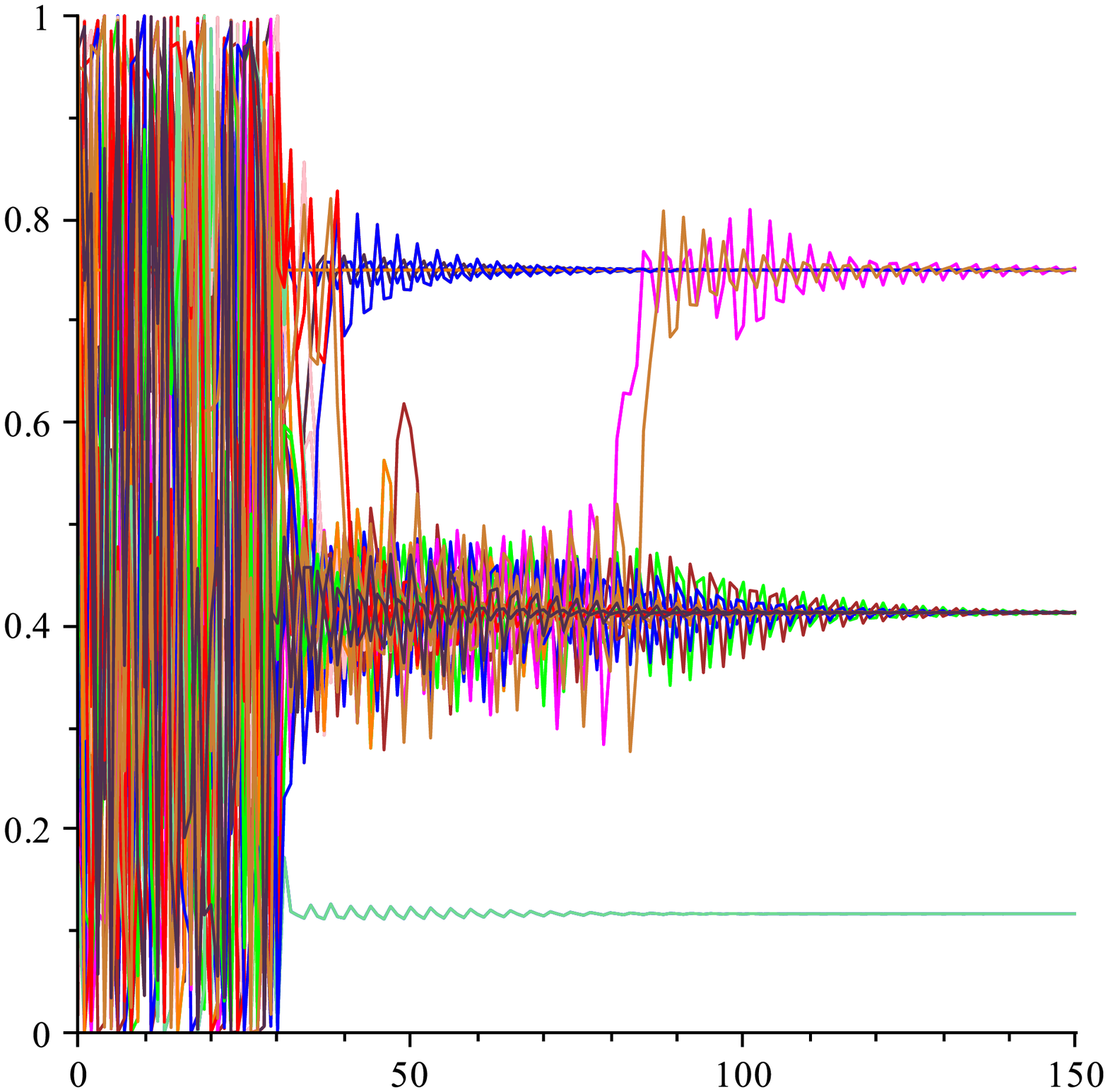}
\caption{Stabilization of the 3-cycle of the logistic equations by stabilizing  the equilibrium \eqref{25} by the control \eqref{26}.  }
\label{Fig13}
\end{minipage}
\;
\begin{minipage}[b]{0.45\linewidth}
\hspace{2cm}\includegraphics[scale=0.15]{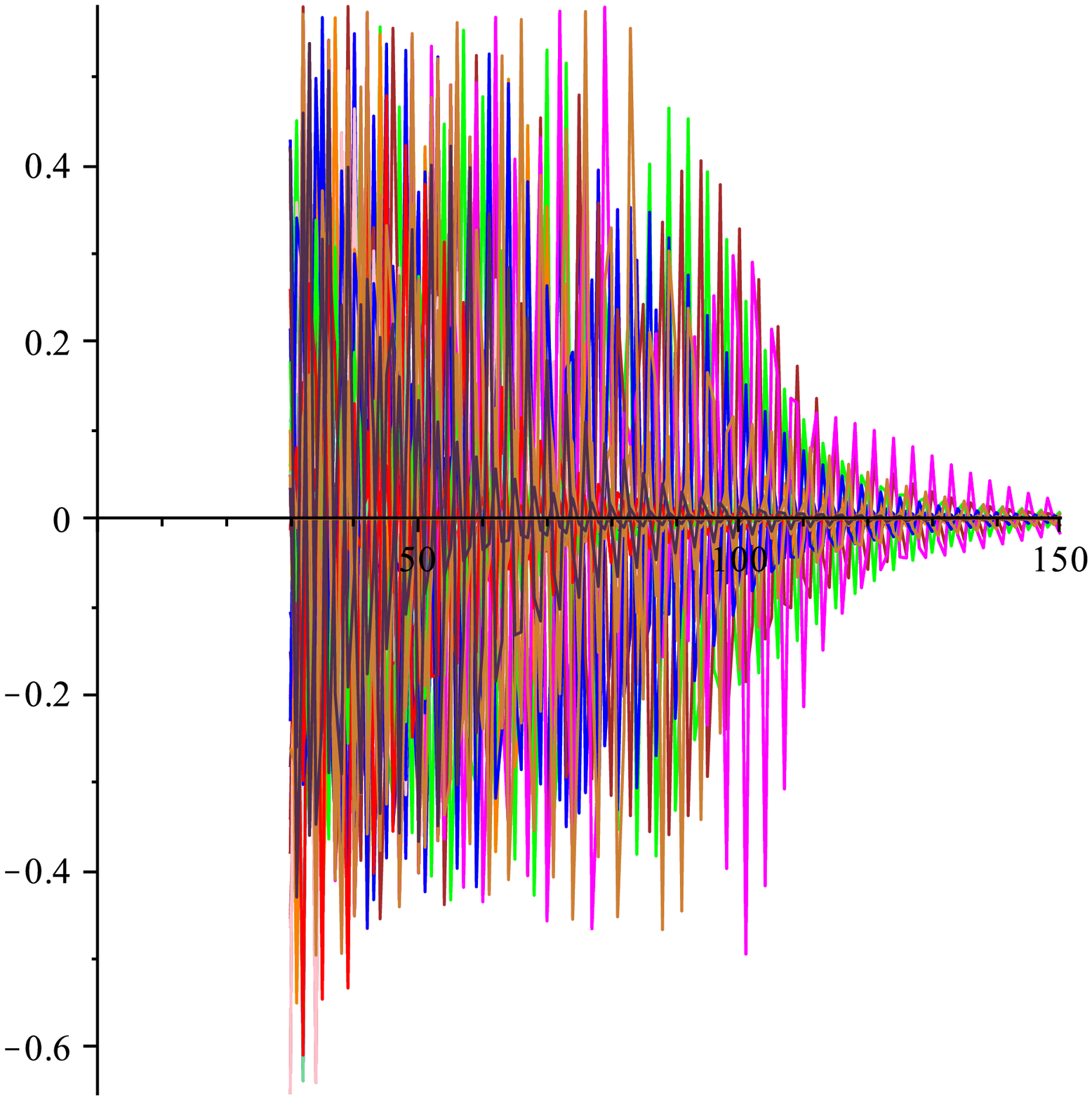}
\caption{ The control \eqref{26}. }
\label{Fig14}
\end{minipage}
\end{figure}


Same way was conducted to search for the cycles of the length 7. It should be mentioned that it is not always true that the trajectory quickly arrived at the basin of attraction of the fixed point of the mapping (see Fig 15) and not for every initial point the stabilizing control start to decrease rapidly (see Fig. 16).


 


\begin{figure}[ht]
\centering
\begin{minipage}[b]{0.45\linewidth}
\hspace{2cm}\includegraphics[scale=0.15]{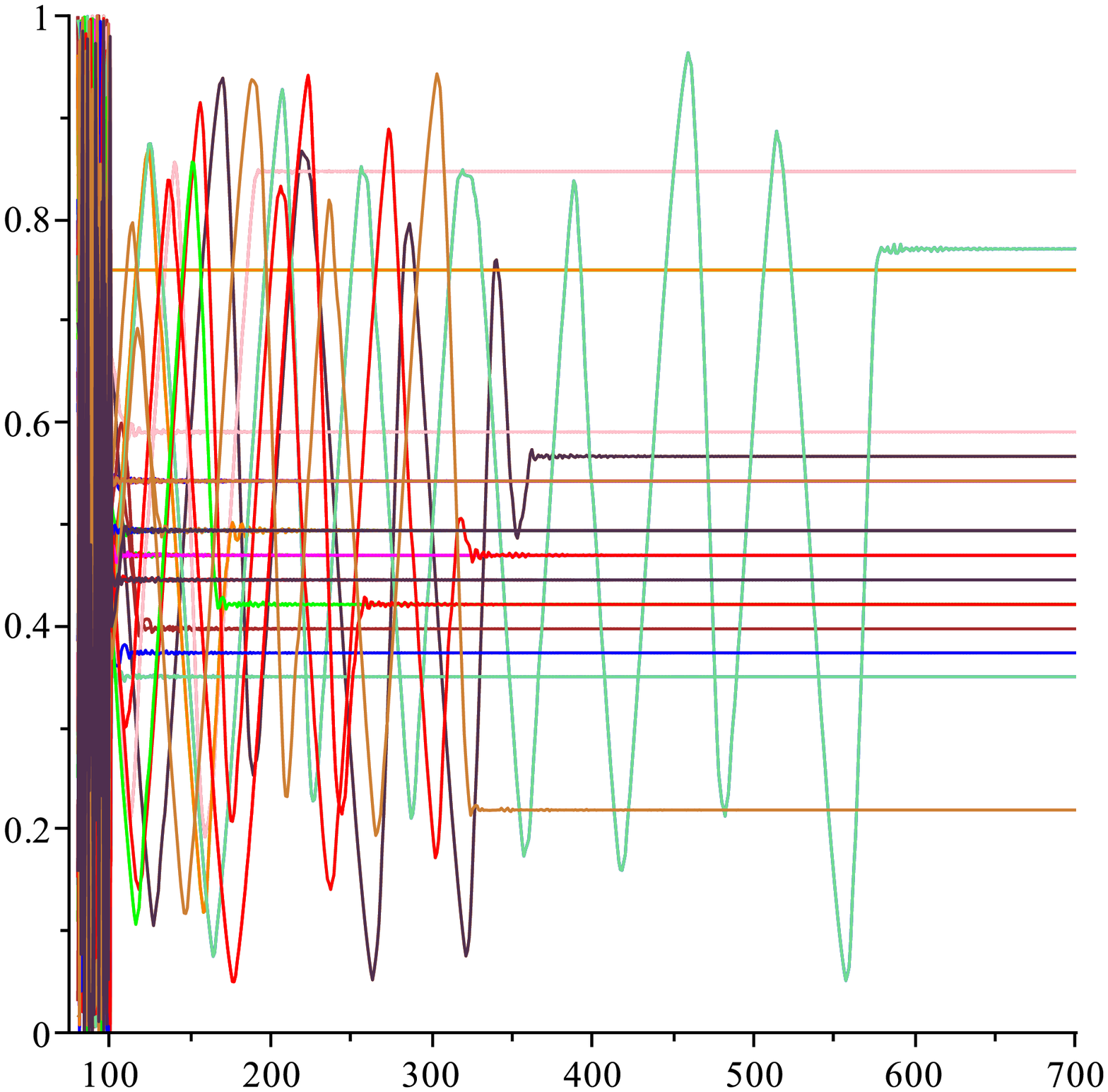}
\caption{Stabilization of the 7-cycle of the logistic equations by stabilizing  the equilibrium of the auxiliary equation.  }
\label{Fig15}
\end{minipage}
\;
\begin{minipage}[b]{0.45\linewidth}
\hspace{2cm}\includegraphics[scale=0.15]{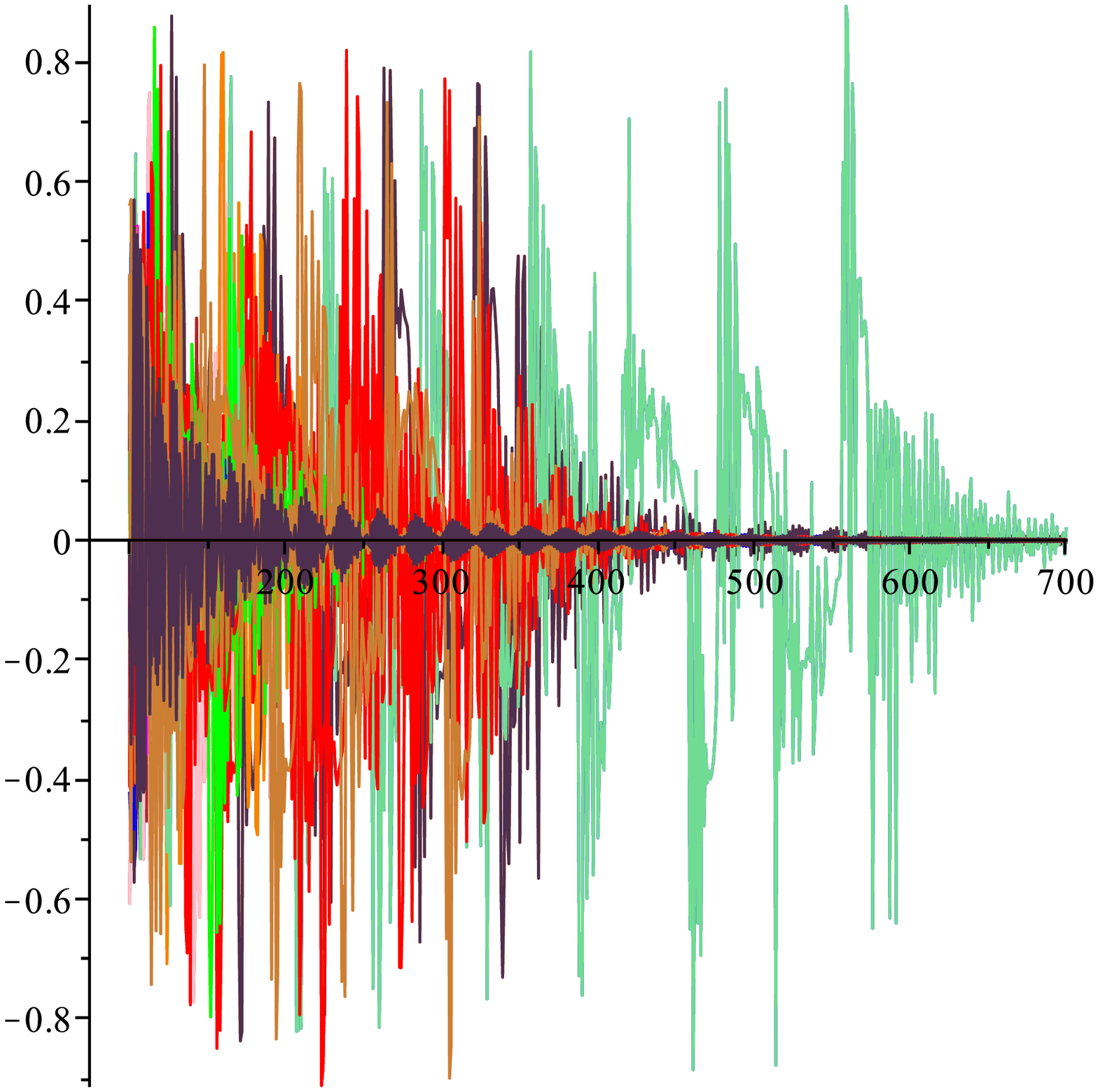}
\caption{The control stabilizing the cycles of the length 7 of the logistic equation $(n=22).$ }
\label{Fig16}
\end{minipage}
\end{figure}


A little bit different situation can be observed in the stabilizing of the cycles of the length 8. Besides the equilibriums of the auxiliary  systems that corresponds to the actual cycles of the lengths 8, one can see a stabilization of the equilibria corresponding to the cycles of the lengths 1,2,4 (see Fig. 17). It can be 
concluded from the  analysis of the  behavior of the graph for the corresponding control  (se Fig. 18). Namely, not for every initial value the control is vanishing. For some initial values the control approaching the 2-cycles and 4-cycles.

\begin{figure}[ht]
\centering
\begin{minipage}[b]{0.45\linewidth}
\hspace{2cm}\includegraphics[scale=0.15]{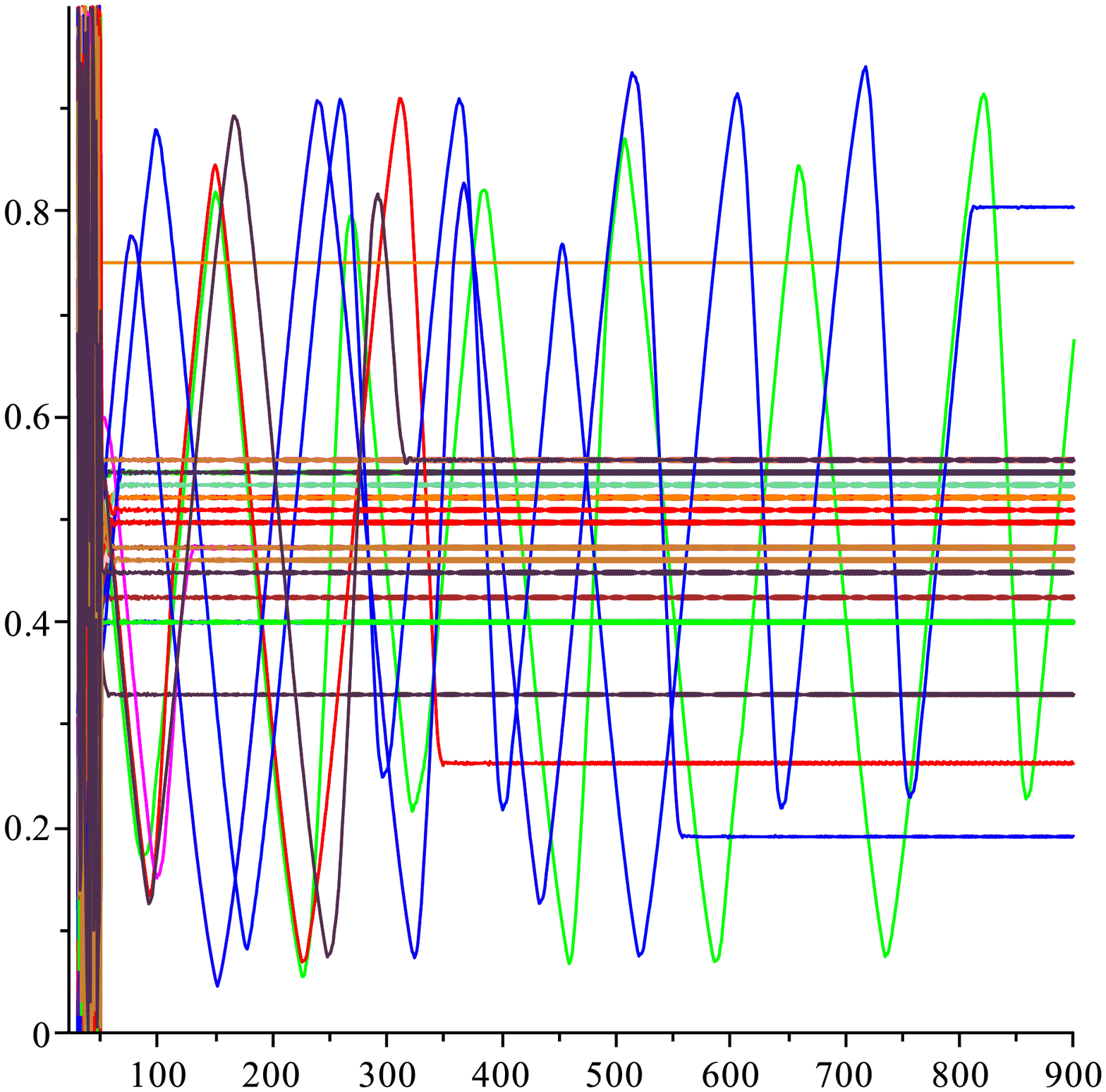}
\caption{  Stabilization of the 8-cycle of the logistic equations by stabilizing  the equilibrium of the auxiliary equation. }
\label{Fig17}
\end{minipage}
\;
\begin{minipage}[b]{0.45\linewidth}
\hspace{2cm}\includegraphics[scale=0.15]{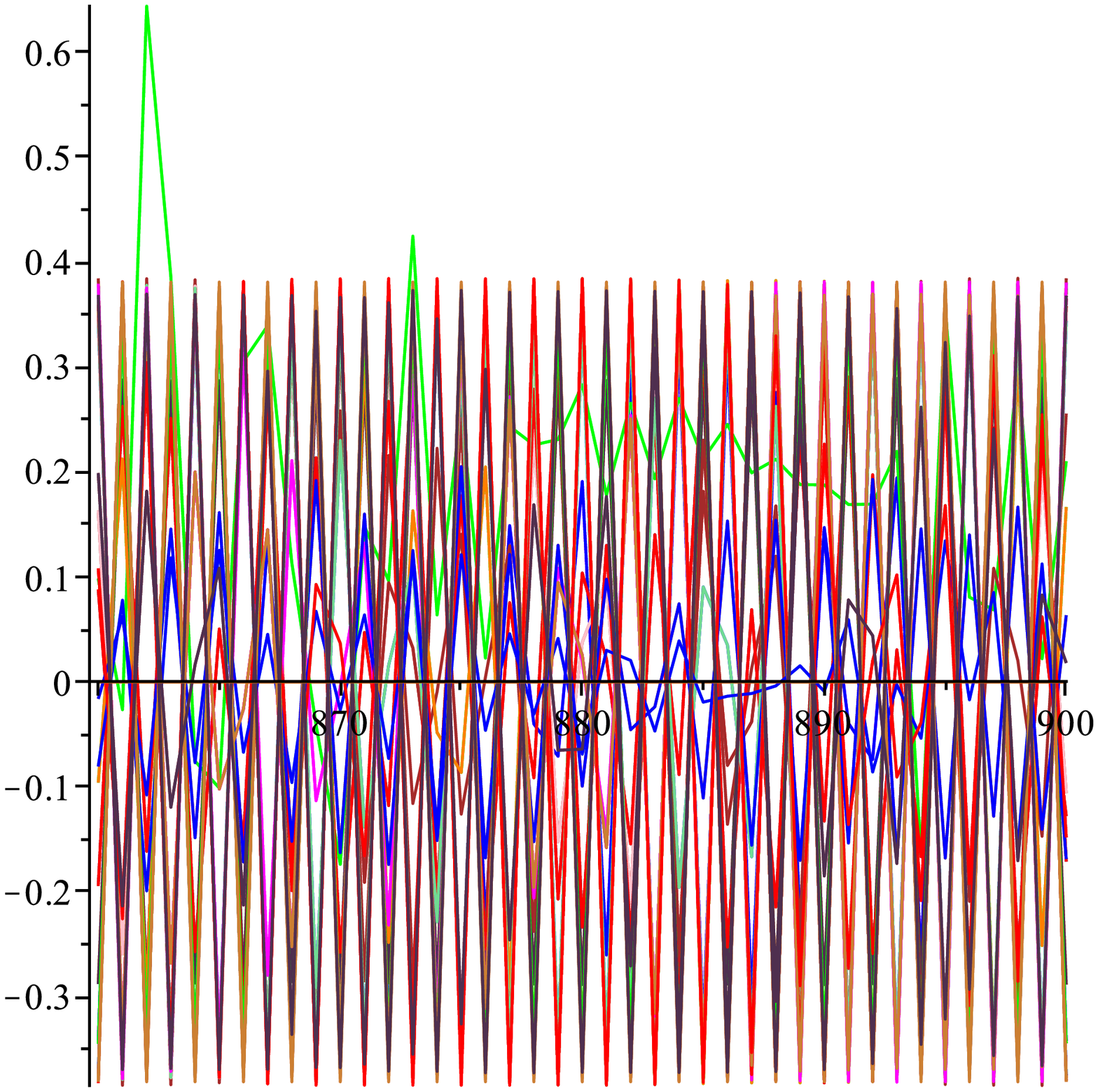}
\caption{ The control stabilizing the cycles of the length 8 of the logistic equation $(n=40).$ }
\label{Fig18}
\end{minipage}
\end{figure}


For the stabilization of 8-cycles  the following auxiliary systems turns out to be more effective. 
\begin{equation}\label{27}
x_{k+1}=f^{(4)}(x_k),\; x_k\in\mathbb R,\; k=1,2,...
\end{equation}
closed by the control
\begin{equation}\label{28}
u_k=-\sum_{j=1}^{n-1} \varepsilon_j\left(f^{(4)}(x_{k-2j+2}) - f^{(4)}(x_{k-2j})\right),\; 0<\varepsilon_j<1,\; j=1,...,n-1.
\end{equation}

To stabilize the 2-cycles for the system \eqref{27}, \eqref{28} it is enough to take $n=19$ (see Fig. 19). In this case the stabilizing control does vanish
(see Fig. 20).

\begin{figure}[ht]
\centering
\includegraphics[scale=0.15]{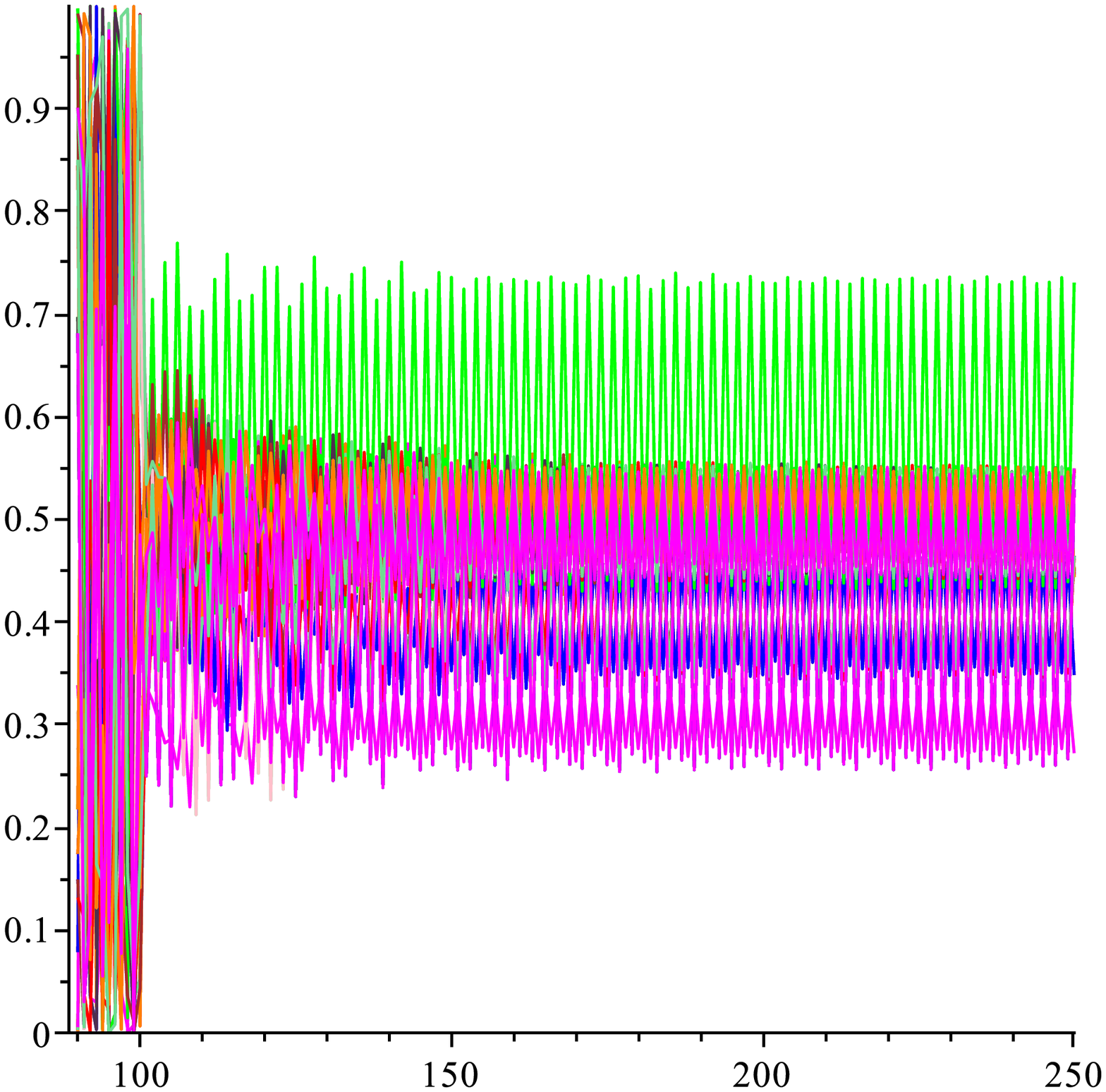}
\hspace{.1cm}
\includegraphics[scale=0.15]{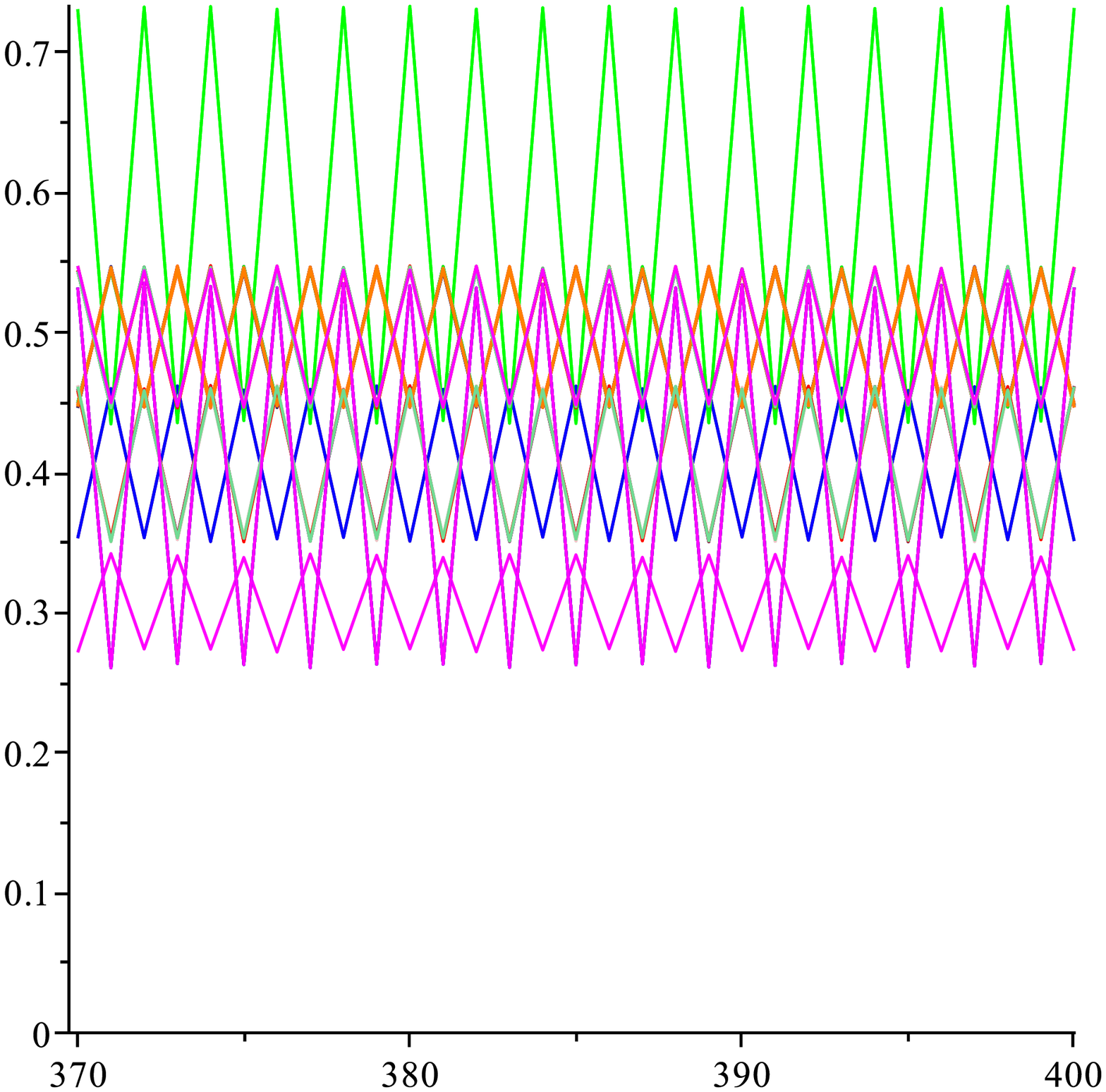}
\caption{ Stabilization of the 8-cycle of the logistic equations by stabilizing  the 2-cycle of the auxiliary equations \eqref{27} by the control \eqref{28}.  }
\label{Fig19}
\end{figure}

 
 \begin{figure}[ht]
\centering
\hspace{2cm}\includegraphics[scale=0.15]{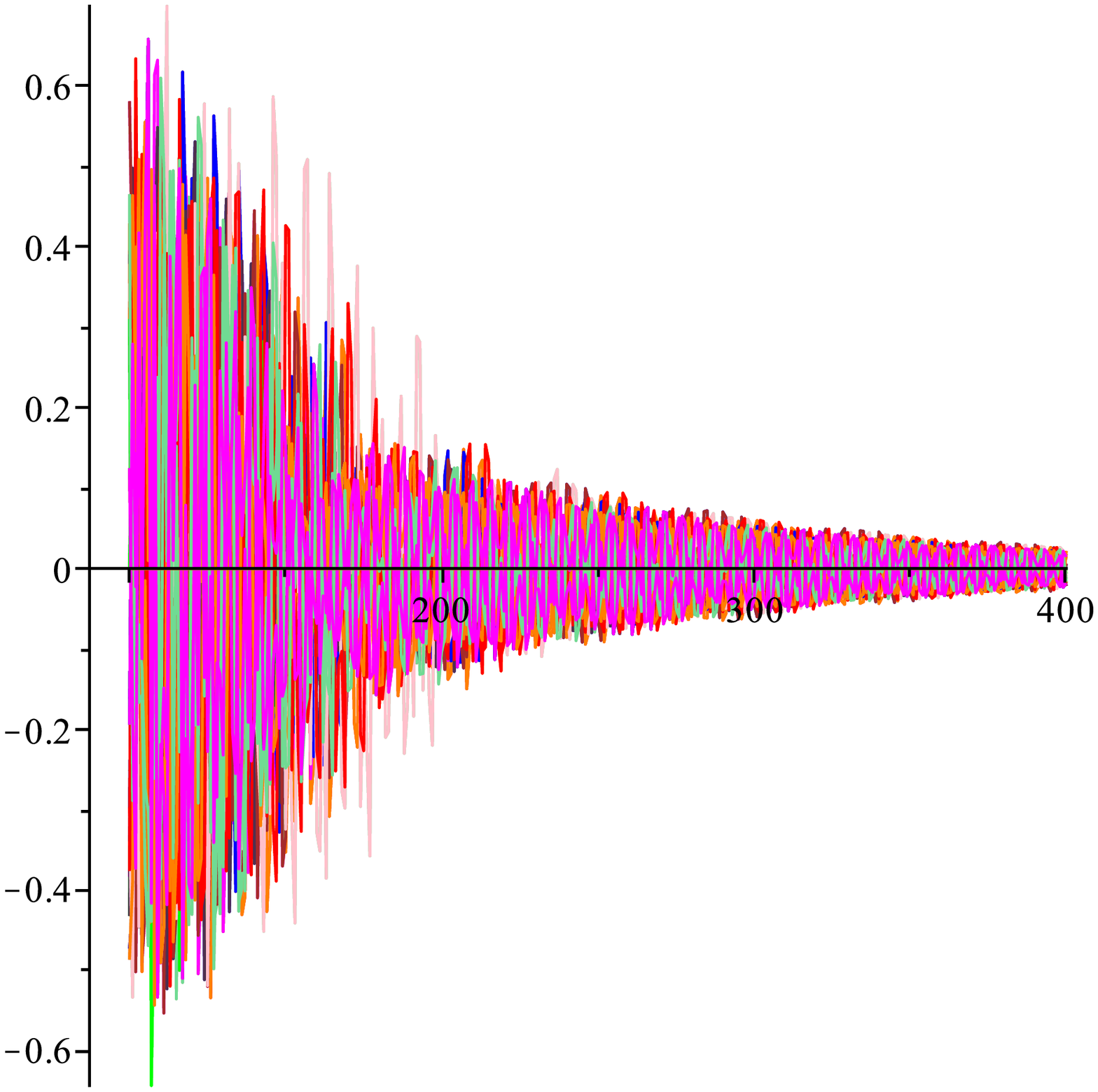}
\caption{The control \eqref{28}.  }
\label{Fig20}
\end{figure}


One of the traditional methods of cycle stabilization in non-linear discrete systems is predictive control schedule \cite{Po}. In \cite{TD} the predictive control
was used for stabilization of the cycles of the length from 1 to 6 in the system with sudden occurrence of chaos, abbreviated SOC:
$$
f(x)=\frac{ha}2-ha\left|x-\frac12\right|+x,\quad -1\le ha\le1+\sqrt 2.
$$
Below we will illustrate the application of the DFG method for the stabilization of 3-cycle (Fig 21,22) and 7-cycles (Fig 23, 24) in a system with SOC.


\begin{figure}[ht]
\centering
\begin{minipage}[b]{0.45\linewidth}
\hspace{2cm}\includegraphics[scale=0.15]{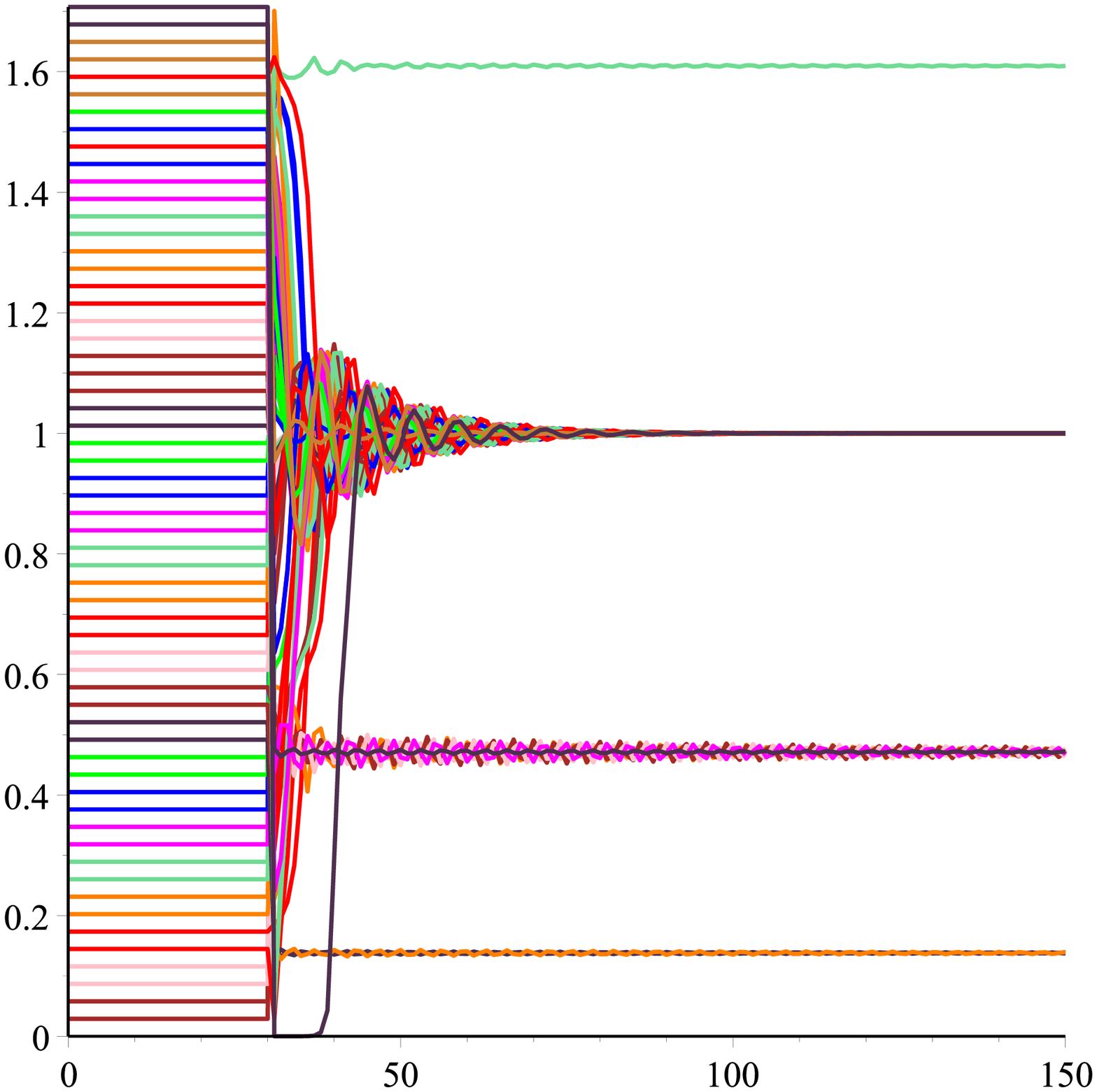}
\caption{ Stabilization of 3-cycle for SOC system by stabilizing  the equilibrium of the auxiliary equation by the DFC (n=7).  }
\label{Fig21}
\end{minipage}
\;
\begin{minipage}[b]{0.45\linewidth}
\hspace{2cm}\includegraphics[scale=0.15]{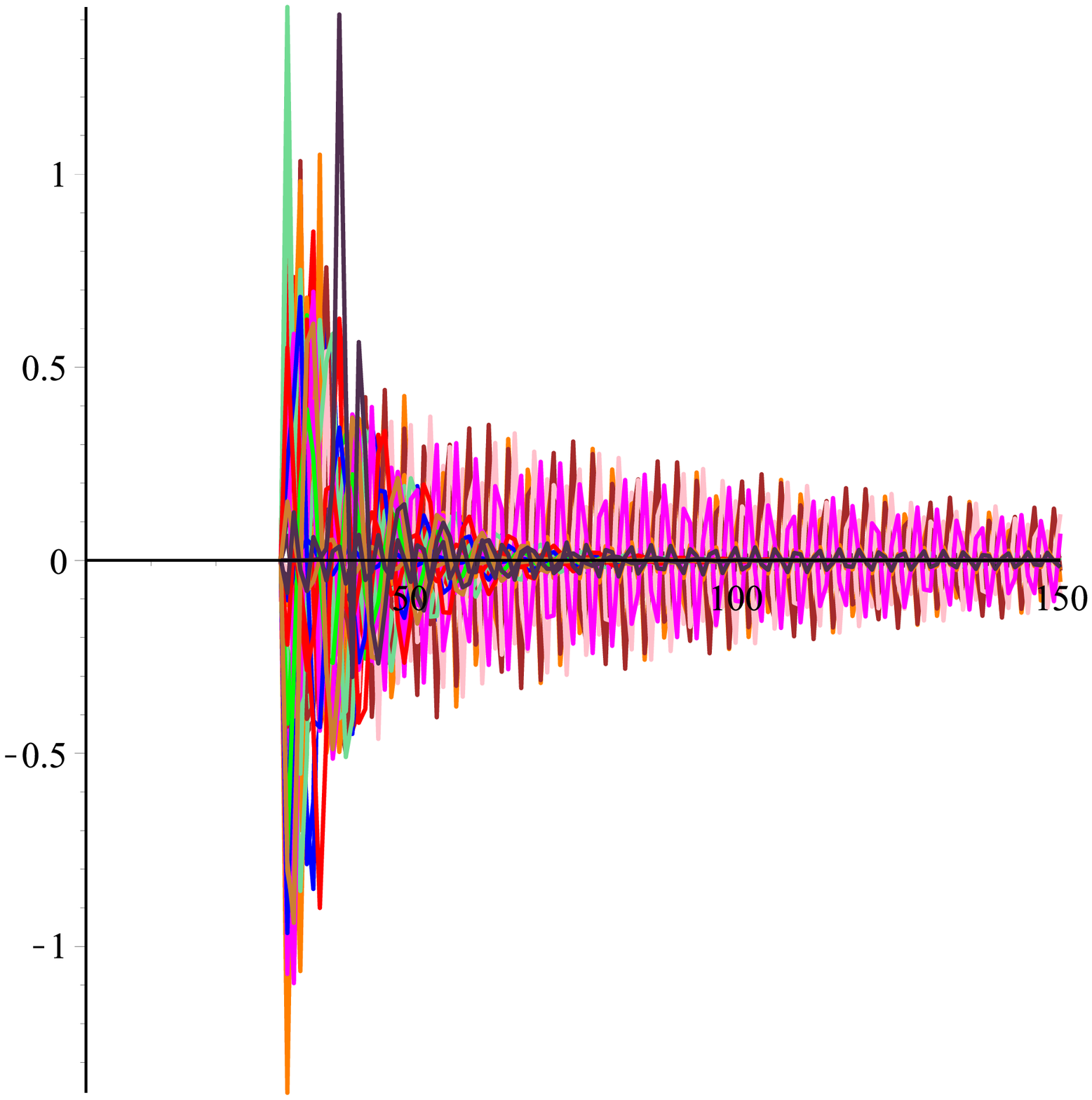}
\caption{ The control stabilizing 3-cycle for SOC system. }
\label{Fig22}
\end{minipage}
\end{figure}


\begin{figure}[ht]
\centering
\begin{minipage}[b]{0.45\linewidth}
\hspace{2cm}\includegraphics[scale=0.15]{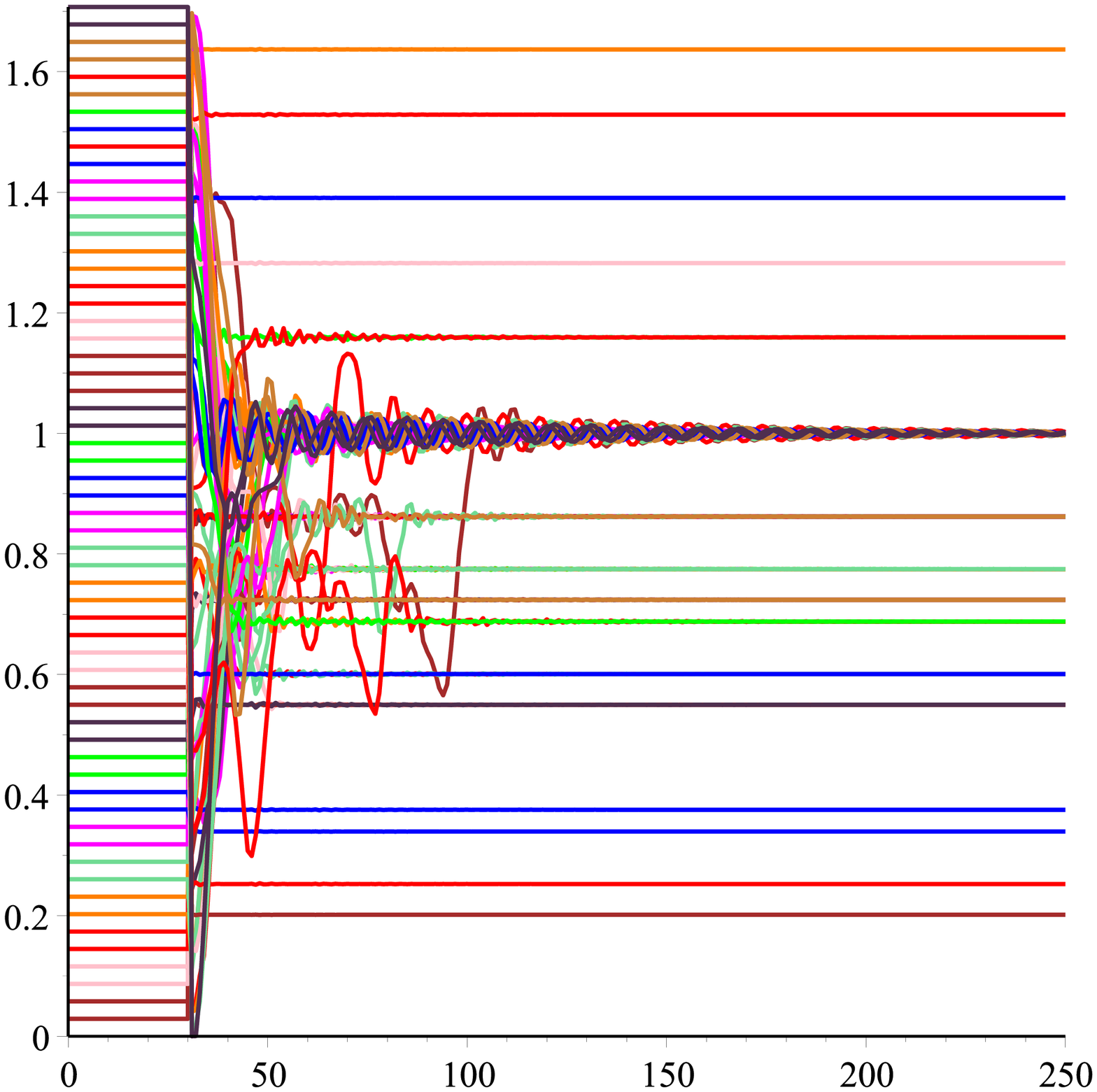}
\caption{Stabilization of 7-cycle for SOC system by stabilizing  the equilibrium of the auxiliary equation by the DFC (n=14).  }
\label{Fig23}
\end{minipage}
\;
\begin{minipage}[b]{0.45\linewidth}
\hspace{2cm}\includegraphics[scale=0.15]{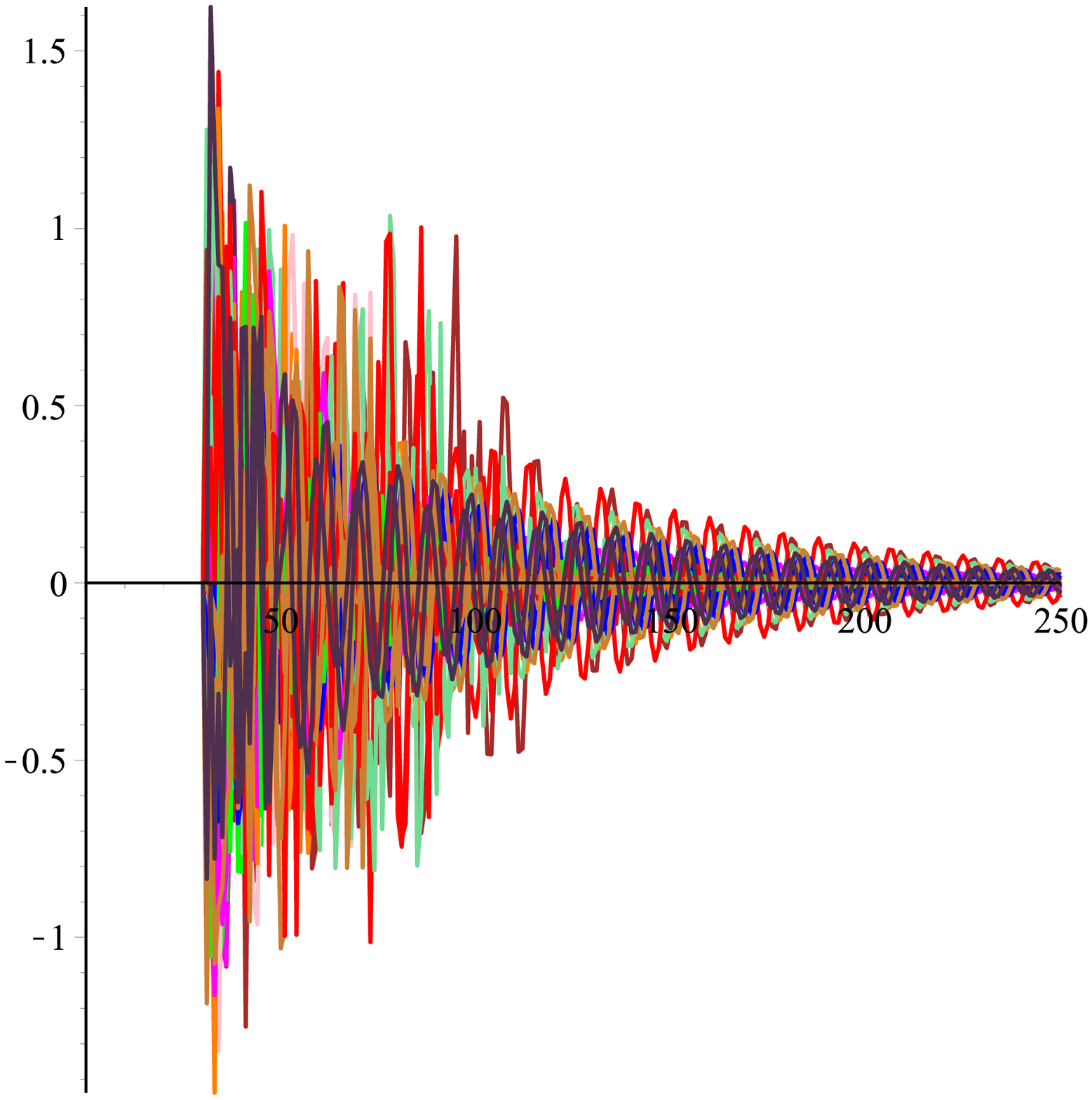}
\caption{The control stabilizing 7-cycle for SOC system. }
\label{Fig24}
\end{minipage}
\end{figure}


\subsection{Practical aspects} Let us mention several advantages of the proposed modified DFC method in the article  compare to
the predictive control. The system with predictive control in fact is a system with advancing argument. In such systems it happen  very often that
cause-effect relations are broken, that can lead to various non-controlling effect.  A system with modified  DFC is a system with
delay, the control uses not predicted values rather real. Moreover, instead of the values of the  previous instances of time one needs to know just 
the dispersion of the values of the function in a prior cycle. As a result, the rate of the convergence is increasing and the total number of computations 
is decreasing. For instance, to stabilize the 7-cycle of the logistic equation the predictive control requires 10000 iterations while MDFC just 700. 
Predictive control has limitations on the length of stabilizing cycles, which is caused by the accumulation of the rounding errors of the computational procedures.  MDFC is robust to the parameters of the systems and to the cycle multipliers. Moreover, the rate of robustness can be improved.

Let us explain this moment more precisely. Consider the control \eqref{2} where the strength coefficients are defined by $\varepsilon_j=1-\sum_{i=1}^j a_i^0,$ and $a_i^0, i=1,...,n-1$ are computed by \eqref{13}. This control will stabilize all equilibriums of the system \eqref{1} with multipliers from the set which is inverse with respect to the unit circle to the image of the exterior of the unit disc $\mathbb D$ under the polynomial mapping
\begin{equation}\label{29}
F(z)=\sum_{j=1}^n a_j^0 z^j,
\end{equation}
i.e.
$$
\left\{ 
\mu: \frac1\mu \in F(\bar{\mathbb C}\backslash\mathbb D)
\right\}.
$$
Here $\bar{\mathbb C}$ is the extended complex plane.
The image of the exterior of the unit disc and the inverse sets are displayed on the Fig. \ref{Fig25}.
\begin{figure}[ht]
\centering
\hspace{1cm}\includegraphics[scale=0.25]{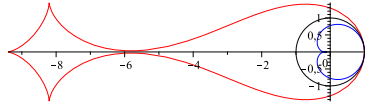}
\caption{The image and  the exterior of the unit disc 
by the polynomial mapping \eqref{29}  and the inversion with respect to the unit circle  (with $n=5$) }
\label{Fig25}
\end{figure}


The inverse image in the neighborhood of some critical points could be tangent to the negative real axis, which
might negatively affect  the robust properties of the control if the values of multipliers are close to these critical 
points. Since the multipliers usually are not known in advance then such situation cannot be excluded.  Therefore,
 it is necessary to separate this set from the real line. It can be done by applying the following procedure which we call  $\varepsilon$-trick. Namely, define new coefficients of the control $\varepsilon_j=1-\sum_{i=1}^j a_i^\varepsilon,$ where
\begin{equation}\label{30}
a_1^\varepsilon=\frac{a_1^0+\varepsilon}{1+\varepsilon},\;a_2^\varepsilon=\frac{a_2^0+\varepsilon}{1+\varepsilon},\; \dots,
a_n^\varepsilon=\frac{a_n^0+\varepsilon}{1+\varepsilon},\quad \varepsilon>0.
\end{equation}
It is clear that the normalizing conditions $\sum_{i=1}^n a_i^\varepsilon =1$ is still valid and the control still stabilize the system. 
In this way the boundary of the region of location of admissible multipliers is shifted off the real axis. On the other hand, the general
linear size of the region decreases in this case. The  Figure 26 displays the exterior of the images of the unit disc under the mappings
$\dfrac1{\sum_{j=1}^5a_j^0z^j }$ and $\dfrac1{\sum_{j=1}^5a_j^\epsilon z^j }$  where $ \varepsilon=0.005$ (Fig. \ref{Fig26}).

\begin{figure}[ht]
\centering
\hspace{2cm}\includegraphics[scale=0.15]{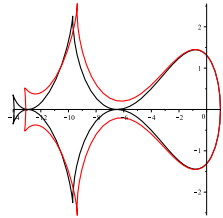}
\caption{ The illustration of the $\varepsilon$-trick (with $n=5,$ $ \varepsilon=0.005$)  }
\label{Fig26}
\end{figure}


It makes sense to use the formulas \eqref{30} of the variation of the extremal coefficients in the case of 2-cycle stabilization as well.

\section{Conclusion} In this article, the actual problem of stabilization of not
known in advance  unstable periodic orbits of discrete chaotic systems is considered. The approach
for solving the stabilization problem is based on the use of nonlinear delayed
feedback control (DFC), proposed in \cite{6} to stabilize an equilibrium and
modified in \cite{Mo} for cycles (there for the case $n=2$). 
Using non-linear (non-linearity must be related to the original
system ) DFC  is the only way to avoid restrictions on the size of the area of
possible localization of multipliers. For example, in \cite{U} it is shown that the linear DFC whith $n = 2$
is applicable only in the case of localization of multipliers in the region $[-3,1)$. It is shown in \cite{TD}
that increasing $n$ does not provide substantial advantages. Namely, if the diameter of the
areas of possible localization of multipliers are greater than 16 in the general case, or
greater than 4 in the case of a simply connected region, then the linear DFC does not solve
stabilization problem.

As it is shown in this article, the problem of limitation of the size of the region of 
multipliers location can be circumvented by using a nonlinear  DFC
with multiple delays , i.e., when $n > 2.$ For $T = 1,2$ the strength 
coefficients are found explicitly. The suggested  control stabilizes all equilibriums and 2-cycles with
negative multipliers. It is discovered  an unexpected connection between the problem of
determining the optimal coefficients and the classical Fourier analysis as well as with 
geometric complex analysis.

The problem of investigation of the stability of cycles of the length greater than 2 is reduced to the study
of stability of cycles of shorter lengths (including equilibria) for an auxiliary control systems. 
Aswas mentioned in \cite{6}, nonlinear DFC  indeed has robust properties. In the context of the current 
problems it means that one control stabilizes  all equilibriums
(or 2 - cycles) with multipliers in $[-\mu^*,1)$  where the value of $\mu^*$ depends on the number $n$
of coefficients  in delayed feedback loop. Theorems 2 and 3 implies the asymptotic dependence on the linear size 
of the region of location of multiples from $n$ at $T=1,2$ and that asymptotic is $\mu^*\sim n^2.$

A. Solyanik \cite{18} claims that this asymptotic dependence is still valid for an arbitrary $T.$ He also claim
an algorithm for constructing the coefficients of direct control, stabilizing cycles of arbitrary length.

Other generalizations of the proposed scheme are possible. In particular, the point of interest is a 
construction of direct stabilizing controls for multidimensional mappings. Preliminary investigations
indicates that the strength coefficients in these tasks  are also associated with Fej\'er polynomial coefficients .

\section{Acknowledgement} The authors would like to thank  Alexey Solyanik for valuable comments and deep insight into considered problems  and to Paul Hagelstein for interesting discussions and for the help in preparation of manuscript.

\smallskip
\bigskip

Dmitriy Dmitrishin, Odessa National Polytechnic University, 1 Shevchenko Ave., Odessa 65044, 
Ukraine. E-mail: dmitrishin@opu.ua\\
 
 Anatolii Korenovskyi, Odessa National University, Dvoryanskaya 2, Odessa 65000, Ukraine. E-mail: anakor@paco.net\\
      
 Alex Stokolos and Anna Khamitova, Georgia Southern University, Statesboro, GA 30458, USA. E-mail: astokolos@georgiasouthern.edu, \\
anna\_khamitova@georgiasouthern.edu

\end{document}